\documentclass[12pt]{amsart}
\usepackage[pdftex,pagebackref,letterpaper=true,colorlinks=true,pdfpagemode=none,urlcolor=blue,linkcolor=blue,citecolor=blue,pdfstartview=FitH]{hyperref}

\usepackage{amsmath,amsfonts}
\usepackage{graphicx}
\usepackage{color}
\usepackage{appendix}

\setlength{\oddsidemargin}{0pt}
\setlength{\evensidemargin}{0pt}
\setlength{\textwidth}{6.0in}
\setlength{\topmargin}{0in}
\setlength{\textheight}{8.5in}

\setlength{\parindent}{0in}
\setlength{\parskip}{5px}

\usepackage{standalone}
\newcommand{\norm}[1]{\left\lVert#1\right\rVert}
\newcommand{\abs}[1]{\left\lvert#1\right\rvert}

\setlength{\marginparwidth}{3cm}
\reversemarginpar
\usepackage{amssymb}
\usepackage{cite}
\usepackage{enumerate}
\usepackage{gensymb}
\usepackage{float}
\usepackage{amsthm}
\usepackage{siunitx}
\usepackage[mathscr]{euscript}
\newtheorem{theorem}{Theorem}[section]
\newtheorem{corollary}[theorem]{Corollary}
\newtheorem{lemma}[theorem]{Lemma}
\newtheorem{proposition}[theorem]{Proposition}
\newtheorem{definition}[theorem]{Definition}
\newtheorem{remark}[theorem]{Remark}

\usepackage{tikz}
\usepackage{caption}
\usepackage{subcaption}
\usepackage{pgfplots}
\usetikzlibrary{patterns, positioning, arrows}
\usepackage{physics}
\usetikzlibrary{decorations.pathmorphing,calc}
\usepackage{amsrefs}
\usepackage{tcolorbox}
\usepackage{MnSymbol}
\newcommand{\R}{\mathbb{R}}
\newcommand{\N}{\mathbb{N}}

\newcommand{\p}{\partial}
\newcommand{\eps}{\varepsilon}

\usepackage{subfiles}
\numberwithin{equation}{section}
\usepackage{tcolorbox}
\newenvironment{claim}[1]{\par\noindent\textit{Claim\space#1:}}

\begin{document}
\renewcommand{\div}{\textnormal{div}}
\newcommand{\dist}{\textnormal{dist}}
\newcommand{\loc}{\textnormal{loc}}
\newcommand{\osc}{\textnormal{osc}\hspace{0.5mm}}
\newcommand{\diam}{\textnormal{diam}}
\newcommand{\naught}{_{0}}
\newcommand{\supp}{\textnormal{supp}}

\title[Global solutions in the plane]{Classification of global solutions to the obstacle problem in the plane}
\author{Anthony Salib}
\address{Department of Mathematics, University of Duisburg-Essen, Thea-Leymann-Strasse 9, 45127 Essen,Germany.}
\email{anthony.salib@stud.uni-due.de}
\author{Georg Weiss}
\address{Department of Mathematics, University of Duisburg-Essen, Thea-Leymann-Strasse 9, 45127 Essen,Germany.}
\email{georg.weiss@uni-due.de}
\subjclass[2020]{35B08,35R35}
\begin{abstract}
	Global solutions to the obstacle problem were first completely classified in two dimensions by Sakai using complex analysis techniques. Although the complex analysis approach produced a very succinct proof in two dimensions, it left the higher dimensional cases, and even closely related problems in two dimensions, unresolved. A complete classification in dimensions $n\geq 3$ was recently given by Eberle, Figalli and Weiss, forty years after Sakai published his proof. In this paper we give a proof of Sakai's classification result for unbounded coincidence sets in the spirit of the recent proof by Eberle, Figalli and Weiss. Our approach, in particular, avoids the need for complex analysis techniques and offers new perspectives on two-dimensional problems that complex analysis cannot address.
\end{abstract}
\keywords{Obstacle problem, classification of global solutions}
\maketitle
\tableofcontents

\section{Introduction}

The classification of global solutions to the obstacle problem
\begin{equation}\label{obstacleproblem}
	\Delta u = \chi_{\{u>0\}},\hspace{2mm} u \geq 0, \text{ in } \R^n,
\end{equation}
was first achieved for $n=2$ by Sakai using the Riemann mapping theorem \cite{sakai1981}. (In what follows we will restrict ourselves to the classification of solutions such that the \textit{coincidence set}, that is the set $\{u=0\}$, has non-empty interior (see Remark \ref{emptyinteriorremark})).

\begin{theorem}[Sakai, 1981]
	\label{theorem:sakai}
	Let $n=2$ and let $u$ be a solution of \eqref{obstacleproblem} such that $\{u=0\}$ has non-empty interior. Then $\{u=0\}$ is either a half-plane, ellipse, parabola or a strip.
\end{theorem}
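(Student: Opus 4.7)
The plan is to replace Sakai's Riemann mapping argument with a real-variable blow-down analysis in the spirit of Eberle-Figalli-Weiss. I would first consider the rescalings $u_R(x) := R^{-2} u(Rx)$. Optimal $C^{1,1}$ regularity for the obstacle problem provides uniform $W^{2,\infty}_{\loc}$ bounds, so along a subsequence $R_k \to \infty$ the $u_{R_k}$ converge in $C^{1,\alpha}_{\loc}$ to a limit $u_\infty$ that is itself a global solution of \eqref{obstacleproblem}. A Weiss-type monotonicity formula then forces $u_\infty$ to be $2$-homogeneous.

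Next I would classify $2$-homogeneous global solutions in $\R^2$: an ODE analysis on the unit circle shows that $u_\infty$ must be either identically zero, a half-space solution $\tfrac{1}{2}((x\cdot e)_+)^2$ for some $e\in S^1$, or a nonnegative quadratic polynomial $\tfrac{1}{2} x^{\top} A x$ with $A$ a symmetric positive semidefinite matrix of trace $1$. The argument then branches according to which type of blow-down occurs.

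If $u_\infty$ is a half-space solution, Monneau's monotonicity formula would give uniqueness of the blow-down, and a directional monotonicity ($\p_e u \geq 0$ for the appropriate direction $e$, proved via a maximum principle applied to difference quotients of $u$) would force $\{u=0\}$ to be exactly a half-plane. If $u_\infty$ is a positive-definite polynomial, then $u$ grows quadratically in every direction and $\{u=0\}$ must be bounded; the real-variable part of Sakai's theorem (planar rigidity of quadrature domains, accessible via a moment argument and a PDE comparison) then identifies $\{u=0\}$ with an ellipse. The degenerate-polynomial case $u_\infty = \tfrac{1}{2} x_1^2$ (after rotation) is the most delicate one, because both the strip and the parabola have this same blow-down. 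To separate them I would apply a Liouville-type argument to $\p_{x_2} u$ (which is harmonic on $\{u>0\}$ with controlled growth at infinity): either $\p_{x_2} u \equiv 0$, in which case $u$ is one-dimensional and $\{u=0\}$ is a strip, or $\p_{x_2} u$ is a nontrivial harmonic function with prescribed growth, and an explicit ODE for the leading second-order correction to $u$ should then force $\{u=0\}$ to be bounded by a parabola.

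The main obstacle is clearly this last case: the blow-down by itself does not distinguish strips from parabolas, so the classification must come from a finer asymptotic expansion of $u$ at infinity. This is precisely the step where Sakai's use of the Riemann map and the Schwarz function is most powerful, and replacing it with a purely real-variable argument is the central technical challenge. The remaining cases (half-plane, ellipse) largely parallel the arguments used by Eberle-Figalli-Weiss in higher dimensions.
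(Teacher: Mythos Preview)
Your reduction to the three blow-down cases is exactly the one the paper uses, and the half-plane, ellipse and strip cases are indeed dispatched essentially as you describe (the strip is handled by a dimension-reduction argument: if $\mathcal{C}$ contains a full line then $u$ is one-dimensional, cf. Lemma \ref{monotonicty:global} and the discussion after it). The genuine content of the paper, and the genuine gap in your proposal, is the parabola case $u_\infty=\tfrac12 x_1^2$.

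Your plan there is too optimistic in two specific ways. First, a Liouville argument on $\partial_{x_2}u$ alone cannot separate strips from parabolas: $\partial_{x_2}u$ is harmonic on $\{u>0\}$ and vanishes on $\mathcal{C}$, but you only have $O(|x|)$ growth a priori, and both the strip ($\partial_{x_2}u\equiv 0$) and the parabola ($\partial_{x_2}u\not\equiv 0$, growing like $|x|^{1/2}$) are compatible with this. What the paper actually does is study $w=u-p$ under the \emph{frequency} rescaling $\tilde w_r=w_r/\|w_r\|_{L^2(\partial B_1)}$ and identify its limits as the $\tfrac32$-homogeneous solution to the \emph{thin obstacle problem}; the relevant ``second-order correction'' is not governed by an ODE but by the Signorini problem, and pinning it down requires an almost-monotonicity result for the Almgren frequency of $w$ (Lemma \ref{almostmono}) together with a two-step blow-down (Propositions \ref{blowdownanalysis}, \ref{blowdown2}). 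Second, even after you match $u$ with a paraboloid solution $u_{\gamma P}$ so that $u-u_{\gamma P}=o(|x|^{3/2})$, the standard two-phase ACF functional is useless here because the difference is not sublinear. The paper constructs a \emph{three-phase} ACF functional with decay $r^{-9}$ (Theorem \ref{prop:modACF}) and then produces the three disjoint nodal domains by a sliding argument on translates of the matched paraboloid (Claim 1 in Section \ref{section:conclusion}). Neither of these two ingredients is hinted at in your outline, and both are essential.
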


After Sakai's result, the classification of solutions to \eqref{obstacleproblem} with bounded coincidence set was achieved using tools from potential theory for all dimensions $n\geq 2$ in \cite{friedman1986characterization}  (see also \cite{dive1931attraction,lewy1979inversion,di1986bubble} for earlier partial classifications as well as \cite{eberle2021characterizing} for a short proof). On the other hand, the case when the coincidence set is unbounded remained an open problem until very recently. A complete classification for dimensions $n\geq 6$ was given in \cite{eberleduke2022} while the restriction on the dimension was later removed in \cite{eberle2022complete}, thus completing the classification for dimensions $n\geq 3$. 

\begin{theorem}[Eberle, Figalli and Weiss (2022)]
	\label{3dimthm}
	Let $n\geq3$ and let $u$ be a solution of \eqref{obstacleproblem} such that $\{u=0\}$ has non-empty interior. Then $\{u=0\}$ is either a half-space, ellipsoid, paraboloid or a cylinder with ellipsoid or paraboloid as base. 
\end{theorem}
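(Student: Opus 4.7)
The plan is to follow the strategy of \cite{eberleduke2022, eberle2022complete}, which avoids complex analysis and thus extends beyond $n=2$. The case of bounded coincidence set is already classified in \cite{friedman1986characterization} as an ellipsoid, so it suffices to treat solutions whose coincidence set is unbounded and to show that such a set must be a half-space, a paraboloid, or a cylinder over an ellipsoid or paraboloid. The argument proceeds by induction on $n$, with the base case $n=2$ supplied by Theorem \ref{theorem:sakai}.

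First I would study the blow-downs $u_R(x) := u(Rx)/R^2$. Caffarelli's quadratic growth estimate, applicable because $\{u=0\}$ has non-empty interior and therefore contains a ball, produces uniform bounds on compact sets, so along a subsequence the $u_R$ converge in $C^{1,\alpha}_{\loc}$ to a global solution $u_\infty$. A Weiss-type monotonicity formula forces $u_\infty$ to be $2$-homogeneous, and the complete list of such solutions consists of \emph{half-space solutions} $\tfrac{1}{2}\bigl((x\cdot\nu)_+\bigr)^2$ and quadratic polynomials $\tfrac{1}{2}x^\top A x$ with $A\succeq 0$ and $\operatorname{tr} A = 1$.

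In the \emph{polynomial blow-down case}, the gap in Weiss's energy between polynomial and half-space values yields uniqueness of the blow-down; hence $u$ differs from a translated quadratic polynomial by lower-order terms. A further analysis exploiting analyticity of the free boundary at non-degenerate points then identifies $u$ itself as the quadratic polynomial, whose positivity set is the desired ellipsoid, paraboloid, or cylinder over one of these, depending on the rank and signature of $A$ (ellipsoid when $A>0$, paraboloid when one eigenvalue degenerates and there is a linear drift, cylinder when $A$ has a larger kernel).

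In the \emph{half-space blow-down case} with direction $\nu$, the goal is to show $u$ is translation invariant in every direction $e\perp\nu$ (yielding a one-dimensional solution and hence a half-space coincidence set) or in a proper subspace $V\subset\nu^\perp$ (yielding a cylinder $V\times E$, where the base $E\subset V^\perp$ is then classified by the inductive hypothesis, invoking Theorem \ref{theorem:sakai} when $\dim V^\perp = 2$). To achieve this I would prove an \emph{epiperimetric inequality} at half-space energies, giving a quantitative convergence rate for $u_R\to u_\infty$, and apply it to the directional derivatives $\partial_e u$: rigidity then forces $\partial_e u\equiv 0$ on the appropriate subspace. The principal obstacle is exactly this epiperimetric inequality and the accompanying flatness improvement: the corresponding estimate of \cite{eberleduke2022} required $n\ge 6$, and a substantially sharper monotonicity/rigidity argument was needed in \cite{eberle2022complete} to reach all $n\ge 3$.
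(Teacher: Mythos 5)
Note first that this paper does not prove Theorem \ref{3dimthm}: it is quoted from \cite{eberleduke2022,eberle2022complete}, and only the strategy of those works is sketched in the introduction, so your proposal can only be measured against that strategy. Against it, there is a genuine gap at the heart of your argument, namely the polynomial blow-down case. You claim that once the blow-down is a quadratic polynomial $\tfrac12 x^\top A x$, a further analysis ``identifies $u$ itself as the quadratic polynomial'', whose positivity set is then an ellipsoid, paraboloid or cylinder. This cannot be right: the coincidence set of a nonnegative quadratic polynomial is an affine subspace, hence has empty interior, contradicting the standing hypothesis; and conversely the ellipsoid and paraboloid solutions are not polynomials even though their blow-downs are (see Propositions \ref{paraboloidsolutions} and \ref{blowdonw2d} for the two-dimensional paraboloid solutions, whose blow-down is $\tfrac12 x_1^2$). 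You have conflated the blow-down $p$ with $u$, and thereby skipped the entire difficulty of the theorem: showing that an unbounded, non-cylindrical coincidence set, whose blow-down is a degenerate polynomial, must be a paraboloid. What \cite{eberleduke2022,eberle2022complete} actually do there, as recalled in the introduction, is to prove an expansion $u = p(x) + b\cdot x + o(\abs{x})$ for $n\geq 4$ (with a two-stage refinement removing an $R\log(R)$ term when $n=3$), construct a paraboloid solution $u_{\text{matched}}$ with the same expansion, and conclude $u\equiv u_{\text{matched}}$ by applying the ACF monotonicity formula to the now sublinear difference. None of this matching mechanism appears in your outline.

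The half-space branch of your argument is also misdirected: if the blow-down is a half-space solution then $u$ is itself a half-space solution, by convexity and the one-dimensionality argument of \cite[Proposition 5.4]{petrosyan2012regularity} recalled in Remark \ref{halfspaceremark}; no epiperimetric inequality or translation-invariance argument for $\partial_e u$ is needed there. Moreover the restriction $n\geq 6$ in \cite{eberleduke2022} did not stem from an epiperimetric inequality but from the error estimates needed in the potential-theoretic expansion, so the step you defer as ``the principal obstacle'' is both misattributed and, in your scheme, precisely the missing proof. What does survive from your outline are the standard reductions: the bounded case is an ellipsoid by \cite{friedman1986characterization}; if the coincidence set contains a full line, $u$ is independent of that direction and one reduces the dimension, with the two-dimensional base case supplied by Theorem \ref{theorem:sakai}; and half-space blow-downs are rigid. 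But with the core unbounded non-cylindrical case unaddressed, the proposal does not amount to a proof.
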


In both \cite{eberleduke2022} and \cite{eberle2022complete}, the focus is showing that solutions with unbounded, non-cylindrical coincidence sets are paraboloids as this implies Theorem \ref{3dimthm} (see for instance \cite[Section 3.2]{eberleduke2022}). Broadly speaking, the general method in both proofs is to match the asymptotic behaviour of $u$ at infinity with that of a solution of \eqref{obstacleproblem} with prescribed coincidence set, and then show that these two solutions are indeed identical.

In the case $n\geq 4$, the method in \cite{eberle2022complete} is to first obtain an expansion of a solution $u$ to \eqref{obstacleproblem} of the form 
\begin{equation}\label{expansion:example}
	u = p(x) + b\cdot x + o(\abs{x}),
\end{equation}
where $p(x) = \lim_{r \to \infty} r^{-2}u(rx)$ is the blow-down and $b \in \R^n$ is some vector that depends on the solution. Then a solution $u_{\text{matched}}$ is constructed so that $\{u_{\text{matched}} = 0\}$ is a paraboloid and $u_{\text{matched}}=p(x) + b\cdot x + o(\abs{x})$. This choice of matching meant that $u-u_{\text{matched}}$ has sublinear growth at infinity so that an application of the Alt-Caffarelli-Friedmann (ACF) functional (see \cite{acf84}) yields that $u=u_{\text{matched}}$.

For $n=3$ however, an expansion as in \eqref{expansion:example} is not possible. In fact, it can be shown that $\strokedint_{B_R} \abs{u-p} \simeq R\log(R)$ when $\{u=0\}$ is a paraboloid and hence in this sense, the case $n=3$ is critical for the method. Nevertheless, a two stage version of the method where the $R\log(R)$ behaviour is carefully removed succeeds in constructing a solution $u_{\text{matched}}$ such that $\{u_{\text{matched}} = 0\}$ is a paraboloid and $u-u_{\text{matched}}$ has sublinear growth at infinity. 

When $n=2$ it can be shown that $\strokedint_{B_R} \abs{u-p} \simeq R^{\frac{3}{2}}$ and so this corresponds to a supercritical case for the method. The purpose of this paper is to classify solutions $u$ to \eqref{obstacleproblem} with unbounded coincidence sets when $n=2$ in the spirit of \cite{eberle2022complete}. In this way, we give a proof of Theorem \ref{theorem:sakai} that avoids complex analysis techniques.  

Apart from unifying the proofs of Theorems \ref{theorem:sakai} and \ref{3dimthm}, it is important to give a proof of Theorem \ref{theorem:sakai} that does not rely on complex analysis since there are closely related problems in two dimensions that complex analysis cannot solve. For instance, we believe that global solutions to the obstacle problem in two dimensional exterior domains cannot be addressed using complex analysis techniques, while the techniques developed in this paper could shed light on how to approach this related problem. 

As was noticed in both \cite{eberleduke2022,eberle2022complete}, and as will be explained later for the case $n=2$, in order to prove Theorem \ref{theorem:sakai}, we only need to classify a certain class of solutions to \eqref{obstacleproblem}. Precisely, we will show the following:
\begin{theorem}\label{maintheorem}
	Let $N=2$ and $u$ be an $x_2$-monotone global solution to the obstacle problem as in Definition \ref{x2mondef} below. Then $\{u=0\}$ is a paraboloid. 
\end{theorem}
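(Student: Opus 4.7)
The strategy is to mirror the matching scheme of \cite{eberle2022complete}, adapted to the supercritical planar case. After a rotation the $x_2$-monotonicity will force the blow-down of $u$ to be a half-plane solution with half-plane coincidence set $\{x_2\le 0\}$. Concretely, the first step is a blow-down analysis: setting $u_r(x)=r^{-2}u(rx)$, one uses the standard quadratic growth estimates for the obstacle problem together with the $x_2$-monotonicity to pass to a limit $p(x)=\tfrac{1}{2}(x_2)_-^2$ (the half-plane solution must be one-dimensional because the monotonicity excludes the polynomial-type blow-downs). This already gives $u=p+o(|x|^2)$ at infinity.

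The crucial analytic step is to extract the next-order term. Following the discussion in the introduction, the relevant scale is $|x|^{3/2}$, which is precisely the homogeneity of the smallest nontrivial mode for the linearized problem in a half-plane (the classical $r^{3/2}\cos(3\theta/2)$ Signorini mode). I plan to study $v=u-p$, which satisfies $\Delta v = -\chi_{\{u=0\}\cap\{x_2>0\}}+\chi_{\{u>0\}\cap\{x_2<0\}}$, a signed indicator supported in a thin set around the axis $\{x_2=0\}$ by the monotonicity. Rescaling $v_r(x)=r^{-3/2}v(rx)$ and using interior and boundary Harnack-type estimates, together with the $x_2$-monotonicity to kill lower-frequency modes, one should obtain a unique limit profile $w_0=\alpha\,r^{3/2}\cos(3\theta/2)$, yielding the expansion
\begin{equation*}
  u(x)=\tfrac{1}{2}(x_2)_-^2+\alpha\,r^{3/2}\cos(3\theta/2)+b\cdot x+o(|x|)\quad\text{as }|x|\to\infty,
\end{equation*}
for some $\alpha\in\R$ and $b\in\R^2$ determined by $u$.

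Next, I build a paraboloid solution $u_{\text{matched}}$ of \eqref{obstacleproblem} whose coincidence set is a paraboloid $\{x_2\le -Ax_1^2-Bx_1-C\}$ with parameters $(A,B,C)$ chosen so that its own asymptotic expansion has the same $\alpha$, $b$, and constant term as $u$. The explicit planar paraboloid solution (two parameters for axis and opening, one for translation) gives exactly the right number of degrees of freedom to cancel the $|x|^{3/2}$ coefficient and the linear term simultaneously. The difference $w=u-u_{\text{matched}}$ then satisfies $\Delta w=\chi_{\{u>0\}}-\chi_{\{u_{\text{matched}}>0\}}$ and $w=o(|x|)$ at infinity.

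The final step closes the argument by applying the Alt--Caffarelli--Friedman monotonicity functional to $w_+$ and $w_-$ exactly as in \cite{eberle2022complete}: the sublinear growth forces the ACF functional to vanish, which compels one of $w_\pm$ to be identically zero, and a maximum principle comparison together with the $x_2$-monotonicity then yields $w\equiv 0$, so $\{u=0\}=\{u_{\text{matched}}=0\}$ is a paraboloid. The hard part will be the second step: unlike in dimensions $n\ge 3$, where one can obtain a sharp $u=p+b\cdot x+o(|x|)$ expansion directly, here the coefficient $\alpha$ of the $3/2$-mode can be nonzero and must be identified carefully, using the $x_2$-monotonicity to rule out rotated or reflected modes and to guarantee convergence of the $r^{-3/2}v(r\,\cdot)$ rescalings to a unique limit rather than a non-trivial orbit of half-space profiles.
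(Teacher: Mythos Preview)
Your overall scheme---second-order expansion, match with a paraboloid, then ACF---is the paper's, but the final two steps fail as written.

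A minor point first: the quadratic blow-down of an $x_2$-monotone solution is the polynomial $p(x)=\tfrac12 x_1^2$, \emph{not} a half-space solution (if it were, Remark~\ref{halfspaceremark} would force $u$ itself to be a half-space and there would be nothing to prove). Monotonicity rules out the $x_1x_2$ and $x_2^2$ components, not the polynomial case; see Definition~\ref{x2mondef} and Proposition~\ref{blowdonw2d}. Correspondingly $\Delta(u-p)=-\chi_{\{u=0\}}$ is one-signed, not the signed indicator you describe.

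The substantive gap is the claimed remainder $o(|x|)$. The paper's introduction explains that $n=2$ is \emph{supercritical}: $\strokedint_{B_R}|u-p|\simeq R^{3/2}$, and the best expansion one can obtain is $u=p+\alpha_u\hat v_{3/2}+o(|x|^{3/2})$ (Propositions~\ref{refinedblowdown} and~\ref{uniqueness}). Matching the single opening parameter $\gamma$ of a paraboloid then yields only $u-u_{\gamma P}=o(|x|^{3/2})$ (Proposition~\ref{matching}); there is no further linear term available to cancel, so the standard two-phase ACF---whose threshold is sublinear growth---cannot conclude. The paper's remedy, which your plan is missing, is a \emph{three-phase} ACF functional $\Phi(r)=r^{-9}\prod_{i=1}^3\int_{B_r}|\nabla v_i|^2$ (Theorem~\ref{prop:modACF}), monotone and vanishing under $o(|x|^{3/2})$ growth. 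To manufacture three disjoint sign-regions one does not match a translation; one \emph{slides} the already-matched paraboloid by $\sigma e_1$ so that $\partial\mathcal C$ and $\partial P_\sigma$ cross at least twice near the tip, creating three unbounded nodal domains of $u-u_\sigma$ (Section~\ref{section:conclusion}). The three-phase ACF then forces $u\equiv u_\sigma$ for a range of $\sigma\neq 0$, a contradiction unless $\mathcal C=\gamma P$.
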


Although the general strategy is shared with that in \cite{eberle2022complete}, in the supercritical setting, we must overcome some new difficulties. 

Similar to the case when $n=3$, obtaining an expansion such as \eqref{expansion:example} is not possible. However, it is also not possible to remove the $R^{\frac{3}{2}}$ behaviour from the difference $u-p$. Instead, we can only prove an expansion of the form
\begin{equation}\nonumber
u = p(x) + \alpha v + o(\abs{x}^{\frac{3}{2}}),
\end{equation}
where $v$ is the $\frac{3}{2}$-homogeneous solution to the thin obstacle problem. Unlike in dimensions $n\geq 3$, our proof of the expansion does not rely on a decomposition of the generalised Newtonian potential of the set $\{u=0\}$, but rather on the classification of blow-down limits of the sequence $r^{-\frac{3}{2}}(u-p)(rx)$ as $r\to \infty$. In order to do this, we must first prove the almost monotonicity of the Almgren frequency functional and from this conclude sharp growth estimates of $u-p$.  

The second difficulty we must overcome is that $u-u_{\text{matched}}$ is no longer sublinear at infinity and so the standard ACF functional cannot be used to conclude that $u=u_{\text{matched}}$. In our setting, the difference $u-u_{\text{matched}}$ is only $o(\abs{x}^{\frac{3}{2}})$ and so we must develop a modified ACF functional in two dimensions that converges to zero for functions that are $o(\abs{x}^{\frac{3}{2}})$.

\subsection{Structure of the paper}
In Section \ref{section:notationandpreliminaries} we recall known results regarding solutions to the obstacle problem. In particular, we recall the existence of paraboloid solutions of \eqref{obstacleproblem} and give a generalised Newtonian potential expansion for these solutions. We conclude Section \ref{section:notationandpreliminaries} by recalling important results on solutions to the thin obstacle problem which play an important role in our analysis. In Section \ref{section:modifiedACF} the modified ACF functional is constructed. Section \ref{section:blowdownanalysis} is dedicated to the classification of the blow-down limits of $u-p$ with respect to the frequency scaling. Section \ref{section:coincidence:estimates} uses the blow-down analysis to obtain estimates on the coincidence set while Section \ref{section:sharpgrowth} is dedicated to obtaining sharp growth estimates of $u-p$ by establishing the almost monotonicity of the Almgren frequency for $u-p$. In Section \ref{section:refined:blowdown} we identify paraboloid solutions satisfying $u-u_{\text{matched}}=o(\abs{x}^{\frac{3}{2}})$, and in Section \ref{section:conclusion}, using the modified ACF functional we conclude.

\section{Notation and Preliminaries}\label{section:notationandpreliminaries}
\subsection{Notation}
Throughout this work, a point $x\in \R^n$ will be denoted by $(x_1,...,x_n)$ while $\{e_i\}_{1\leq i \leq n}$ represents the canonical basis vectors of $\R^n$. A ball with centre $x$ and radius $r>0$ will be denoted as $B_r(x)$ and when $x$ is the origin it will be omitted. 

We will denote by $\R^n_{+}$ the upper half space $\{x \in \R^n: x_1 >0\}$ and similarly $\R^{n}_-$ represents $\{x \in \R^n: x_1 < 0\}$. The sets $B_r(x) \cap \R^n_+$ and $B_r(x) \cap \R^n_+$ will be denoted by $B_r(x)^-$ and $B_r(x)^+$  respectively. We will denote the outward unit normal on $B_1$ as $\nu$.

When $A\subset \R^n$, the $n$-dimensional Lebesgue measure of $A$ will be denoted by $\abs{A}$, while $\mathcal{H}^m(A)$ will denote the $m$-dimensional Hausdorff measure. The interior of $A$ will be represented by $A^{\circ}$. Given $B \subset \R^n$ the Hausdorff distance between $A$ and $B$ will be denoted by $\dist(A,B)$. 

Given any $w: \R^n \to \R$ we will denote by $w_r(x)$ the rescaled function $w(rx)$. We will denote $f_+$ the function $\max{\{f,0\}}$ and $f_-$ the function $\max\{-f,0\}$. Functions of the form $\frac{1}{2}(x\cdot e)_+^2$ for some $e \in \mathbb{S}^{n-1}$ will be called \textit{half-space solutions}.

We will denote the \textit{coincidence set} as $\mathcal{C} = \{u=0\}$ and we will refer to $\p \mathcal{C}$ as the \textit{free boundary}. We recall the following properties for solutions of \eqref{obstacleproblem} (see for instance \cite{petrosyan2012regularity}). Any constant that depends only on the dimension will be called universal.
\subsection{Basic properties of global solutions}
We recall the following properties for solutions of \eqref{obstacleproblem} (see for instance \cite{petrosyan2012regularity}). 
\begin{lemma}[Basic Properties]\label{lemma:basic:properties}
	Let $u$ be a solution to \eqref{obstacleproblem}. Then the following hold:
	\begin{enumerate}[i.]
		\item if $0 \in \p \mathcal{C}$ then $u$ is uniformly $C^{1,1}$, that is there exists a universal constant $C$ such that $\norm{D^2u}_{L^{\infty}(\R^2)} \leq C$,
		\item $u$ is convex and hence the coincidence set $\mathcal{C}$ is convex,
		\item and if $\mathcal{C}$ has non-empty interior then $\p \mathcal{C}$ is analytic.
	\end{enumerate}
\end{lemma}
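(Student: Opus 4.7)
The plan is to handle each of the three items using the classical machinery for the obstacle problem developed in the reference cited by the authors, adapted to the global setting in the plane.

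For (i), I would first establish the quadratic growth estimate $\sup_{B_r(0)} u \le C r^2$ at the free boundary point $0$. Writing $u = h + w$ on $B_1$, with $h$ harmonic and equal to $u$ on $\partial B_1$ and $w$ solving $\Delta w = \chi_{\{u>0\}}$ with zero boundary data, standard potential-theoretic bounds give $\|w\|_{L^\infty} \le C$, while $u \ge 0$ and $u(0) = 0$ pin down $h(0)$. A Harnack iteration on dyadic balls, comparing the obstacle solution to the quadratic barrier $\tfrac{1}{2}|x|^2/n$, converts this into the desired quadratic bound. With $\Delta u \in L^\infty$ and quadratic growth at free boundary points in hand, interior $C^{1,1}$ estimates (Calderón–Zygmund together with a covering argument separating points in $\{u > 0\}$ from points near $\partial \mathcal{C}$, and exploiting the scaling invariance of the equation) upgrade to the uniform bound $\norm{D^2 u}_{L^\infty(\R^2)} \le C$.

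For (ii), the workhorse is the concavity function $c(x,y) = u\!\left(\tfrac{x+y}{2}\right) - \tfrac{1}{2}(u(x) + u(y))$, or equivalently the nonnegativity of $\partial_{ee} u$ in every direction $e$. Formally, on the positivity set $\{u>0\}$ one has $\Delta(\partial_{ee} u) = 0$; on the interior of $\mathcal{C}$ the second derivative vanishes; and the singular contribution from the free boundary is controlled by approximating $\chi_{\{u>0\}}$ by smooth data and passing to the limit. Combined with the uniform $C^{1,1}$ bound from (i), which forces every blow-down $p(x) = \lim r^{-2} u(rx)$ to be a convex, $2$-homogeneous solution, a maximum-principle argument over large balls propagates $\partial_{ee} u \geq 0$ from infinity back to finite scales. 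Once $u$ is convex on $\R^2$, the coincidence set $\mathcal{C} = \{u = 0\}$ is automatically a sublevel set of a convex function, hence convex.

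For (iii), given that $\mathcal{C}^{\circ} \neq \emptyset$, its convexity from (ii) implies that at every point of $\partial \mathcal{C}$ the blow-up of $u$ is (up to rotation) a half-space solution $\tfrac{1}{2}(x \cdot e)_+^2$, so every free boundary point is a \emph{regular} point in Caffarelli's dichotomy. Caffarelli's theorem then yields $\partial \mathcal{C} \in C^{1,\alpha}$ locally. A partial hodograph (Legendre) transform straightens the free boundary to a flat piece of a hyperplane and converts the overdetermined problem into a nonlinear Dirichlet problem on a half-ball with analytic structure and analytic boundary data; Kinderlehrer–Nirenberg's theorem then gives real analyticity of both the straightened solution and of the interface, and hence of $\partial \mathcal{C}$.

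The main obstacle is really (ii): the concavity-function argument and the passage to the limit across the free boundary are the subtlest ingredients, and in the global setting we must close the maximum-principle step using asymptotic information at infinity (through the blow-down $p$) rather than a smallness assumption. Items (i) and (iii) are comparatively mechanical once the standard $C^{1,1}$ theory and the Kinderlehrer–Nirenberg hodograph machinery are available.
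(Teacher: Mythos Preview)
The paper does not actually prove this lemma: it is stated as a recollection of standard facts, with a citation to \cite{petrosyan2012regularity} and no argument given. So there is nothing to compare against on the paper's side.

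Your outline is a faithful sketch of the proofs in that reference and is essentially correct. A couple of minor remarks. For (ii), the regularization route you describe works but is not the cleanest: the standard argument uses the second difference quotient $\delta^2_{h,e}u(x) = h^{-2}\bigl(u(x+he)+u(x-he)-2u(x)\bigr)$ directly, observing that it is nonnegative on $\{u=0\}$ and superharmonic on $\{u>0\}$, so that $\min(\delta^2_{h,e}u,0)$ is globally superharmonic; the minimum principle on $B_R$ together with the convexity of the blow-down then forces $\delta^2_{h,e}u\ge 0$. This avoids the limit passage across the free boundary that you flag as delicate. You should also check that the convexity of the blow-down does not itself rely on (ii); it does not, since $2$-homogeneous global solutions are classified directly (polynomial or half-space), but it is worth making that independence explicit to avoid circularity. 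Your treatment of (i) and (iii) matches the standard arguments.
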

Throughout the rest of this work we will assume that $0 \in \p C$.
\begin{remark}
	\label{emptyinteriorremark}
	If $\{u=0\}$ has empty interior then $\Delta u \equiv 1$ (since $u \in C^{1,1}$) and so Liouville's theorem implies that $u$ must be a quadratic polynomial. 
\end{remark}

We will now recall the subharmonic properties of the difference of two solutions to the obstacle problem.  Again these results can be found in \cite[Lemma 2.13]{eberle2022complete}.
\begin{lemma}\label{differenceenergyinequality}
	Let $u_1,u_2: \R^2\to \R$ be global solutions to the obstacle problem. Then $(u_1-u_2)_+$, $(u_1 -u_2)_-$ and $\abs{u_1-u_2}$ are subharmonic and we have that
	\begin{equation*}
		\strokedint_{B_r} \abs{\nabla (u_1-u_2)}^2 dx \leq \frac{C}{r^2} \strokedint_{B_{2r}} (u_1-u_2)^2 dx.
	\end{equation*}
\end{lemma}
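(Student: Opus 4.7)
The plan is to first establish that $w_+ := (u_1-u_2)_+$ is subharmonic; the claim for $w_-$ follows by swapping the roles of $u_1$ and $u_2$, and $|u_1-u_2| = w_+ + w_-$ is then subharmonic as a sum of subharmonic functions. Once subharmonicity is in hand, the Caccioppoli-type inequality will be obtained in the standard way, by testing the distributional inequality $\Delta w_+ \geq 0$ against $\phi^2 w_+$ for a cutoff $\phi$, and symmetrically for $w_-$.

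For the subharmonicity of $w_+$, I would write $w := u_1-u_2$ and observe that, in the distributional sense,
\begin{equation*}
\Delta w \;=\; \chi_{\{u_1>0\}} - \chi_{\{u_2>0\}}.
\end{equation*}
On the open set $\{w>0\}=\{u_1>u_2\}$ one has $u_1 > u_2 \geq 0$, so $\chi_{\{u_1>0\}}=1\geq \chi_{\{u_2>0\}}$, hence $\Delta w \geq 0$ pointwise a.e.\ on $\{w>0\}$. To globalise this to $w_+$, I would apply Kato's inequality $\Delta w_+ \geq \chi_{\{w>0\}}\Delta w$ (or, equivalently, approximate $s\mapsto s_+$ by smooth convex $T_\varepsilon$ with $T_\varepsilon'\geq 0$ supported in $\{s>-\varepsilon\}$, use the chain rule $\Delta T_\varepsilon(w)=T_\varepsilon'(w)\Delta w + T_\varepsilon''(w)|\nabla w|^2$, and send $\varepsilon\to 0$; the second term is non-negative by convexity, and the first is non-negative in the limit by the computation above). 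This yields $\Delta w_+ \geq 0$ as a Radon measure.

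For the energy inequality, pick a standard cutoff $\phi \in C_c^\infty(B_{2r})$ with $\phi\equiv 1$ on $B_r$ and $|\nabla \phi|\leq C/r$. Since $w_+ \in W^{1,2}_{\loc}\cap C^{1,1}_{\loc}$ is subharmonic and non-negative, I can test against the non-negative function $\phi^2 w_+$ (rigorously, after a brief mollification) to get
\begin{equation*}
0 \;\leq\; \int \phi^2 w_+ \, \Delta w_+
\;=\; -\int \phi^2 |\nabla w_+|^2 \; -\; 2\int \phi\, w_+ \,\nabla \phi\cdot \nabla w_+.
\end{equation*}
Absorbing the cross term with Young's inequality yields $\int \phi^2 |\nabla w_+|^2 \leq C\int |\nabla \phi|^2 w_+^2$. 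The same estimate holds for $w_-$. Because $\nabla w_+$ and $\nabla w_-$ have essentially disjoint supports, $|\nabla w|^2 = |\nabla w_+|^2 + |\nabla w_-|^2$ a.e., and $w^2 = w_+^2 + w_-^2$; summing the two estimates, using the bound on $|\nabla \phi|$, and dividing by $|B_r|$ produces the claimed inequality (with a possibly enlarged universal $C$ coming from $|B_{2r}|/|B_r|$).

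The main technical point to watch is the rigorous testing of the distributional inequality $\Delta w_+\geq 0$ against the $W^{1,2}$ test function $\phi^2 w_+$, since $\Delta w_+$ is a priori only a non-negative Radon measure; this is handled by a standard mollification of $w_+$ and a limiting argument, exploiting the $C^{1,1}$ regularity of $u_1,u_2$ from Lemma \ref{lemma:basic:properties} to pass to the limit in the quadratic term.
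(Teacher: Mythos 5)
Your proof is correct and follows the standard route: the paper does not prove this lemma itself but cites \cite[Lemma 2.13]{eberle2022complete}, where the argument is essentially the same as yours (subharmonicity of $(u_1-u_2)_\pm$ from the sign of $\Delta(u_1-u_2)=\chi_{\{u_1>0\}}-\chi_{\{u_2>0\}}$ on $\{\pm(u_1-u_2)>0\}$, made rigorous by Kato's inequality or convex approximation, followed by the Caccioppoli estimate with a cutoff and the splitting $\abs{\nabla(u_1-u_2)}^2=\abs{\nabla(u_1-u_2)_+}^2+\abs{\nabla(u_1-u_2)_-}^2$ a.e.). One cosmetic slip: $(u_1-u_2)_+$ is only locally Lipschitz, not $C^{1,1}$ (think of $(x_1)_+$), but since the testing of $\Delta (u_1-u_2)_+\geq 0$ against $\phi^2(u_1-u_2)_+$ only needs Lipschitz/$W^{1,2}_{\mathrm{loc}}$ regularity, this does not affect the argument.
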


The blow-down limits with respect to quadratic rescaling have been classified in \cite{caffarelli1998obstacle} and this is the content of the following Lemma.

\begin{lemma}[Classification of blow-downs]
	\label{blowdownlimits}
	Let $u$ be a solution to \eqref{obstacleproblem}. Then 
	\begin{equation*}
		\frac{u(rx)}{r^2} \to u_{0} \text{ in }C^{1,\alpha}_{\loc}(\R^2)
	\end{equation*}
	where $u_{0}$ is a solution of \eqref{obstacleproblem}. Moreover, either $u_{0}$ is a non-negative quadratic polynomial or $u_{0}(x) = \frac{1}{2}(x\cdot e)_+^2$ for some $e \in \mathbb{S}^{n-1}$. 
\end{lemma}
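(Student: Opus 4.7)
My plan is to extract a $2$-homogeneous convex blow-down limit via standard compactness, then classify such limits by a straightforward ODE argument in polar coordinates.

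\textbf{Compactness and identification of the limit.} By Lemma \ref{lemma:basic:properties}(i), $\|D^2 u\|_{L^\infty(\R^2)} \leq C$, and this estimate is preserved under the rescalings $u_r(x) := u(rx)/r^2$, which moreover satisfy $u_r(0) = 0$ and $\nabla u_r(0) = 0$ since $0 \in \partial \mathcal{C}$. Arzelà--Ascoli then produces, along any sequence $r_k \to \infty$, a subsequence converging in $C^{1,\alpha}_{\loc}(\R^2)$ to some $u_0 \in C^{1,1}$. To verify that $u_0$ solves \eqref{obstacleproblem}: nonnegativity is inherited; on the open set $\{u_0 > 0\}$, the $C^{1,\alpha}$ convergence forces $u_{r_k} > 0$ on any compact subset for $k$ large, hence $\Delta u_{r_k} \equiv 1$ there, and the uniform bound $\Delta u_{r_k} \in [0,1]$ permits passage to the limit via weak-$\ast$ convergence in $L^\infty_{\loc}$ to give $\Delta u_0 = 1$ on $\{u_0 > 0\}$; on $\{u_0 = 0\}^\circ$ clearly $\Delta u_0 = 0$; and convexity of $u_0$ (inherited from Lemma \ref{lemma:basic:properties}(ii)) makes $\{u_0 = 0\}$ a convex set whose topological boundary has Lebesgue measure zero. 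Thus $\Delta u_0 = \chi_{\{u_0 > 0\}}$ a.e., so $u_0$ is a global solution of the obstacle problem.

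\textbf{Homogeneity.} To classify $u_0$ I need $2$-homogeneity, which I would obtain from Weiss's monotonicity formula
\[
W(r, u) := \frac{1}{r^{4}}\int_{B_r}\bigl(|\nabla u|^2 + 2u\bigr)\,dx - \frac{2}{r^{5}}\int_{\partial B_r} u^2\,d\mathcal H^{1}.
\]
For any solution of \eqref{obstacleproblem}, $W(\cdot, u)$ is nondecreasing in $r$, and the quadratic growth $|u| + r|\nabla u| \leq C r^2$ makes it uniformly bounded, hence $W(r, u) \uparrow W_\infty$ as $r \to \infty$. The scaling identity $W(\rho, u_r) = W(\rho r, u)$ then forces $W(\cdot, u_0) \equiv W_\infty$, and the equality case in Weiss's formula (equivalent to $x \cdot \nabla u_0 = 2 u_0$ pointwise) gives $u_0(\lambda x) = \lambda^2 u_0(x)$ for all $\lambda > 0$.

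\textbf{Classification via polar coordinates.} Writing $u_0(r, \theta) = r^2 f(\theta)$, a direct computation gives $\Delta u_0 = f''(\theta) + 4 f(\theta)$. Since $u_0$ is convex and $2$-homogeneous, $\{u_0 = 0\}$ is a closed convex cone in $\R^2$, which leaves four possibilities: measure zero (a point, a ray, or a line), all of $\R^2$, a closed half-plane, or a proper wedge of aperture $\alpha \in (0, \pi)$. On the complementary arc $\{f > 0\}$ the equation $f'' + 4 f = 1$ has the general solution $f(\theta) = A \cos(2\theta) + B \sin(2\theta) + \tfrac{1}{4}$, and the $C^{1}$ regularity of $u_0$ forces $f = f' = 0$ at each endpoint of the arc. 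The measure-zero case yields a nonnegative quadratic polynomial, the whole-plane case gives $u_0 \equiv 0$, and the half-plane case produces $f(\theta) = \tfrac{1}{2}\sin^2(\theta - \theta_0)$, i.e. $u_0 = \tfrac{1}{2}(x \cdot e)_+^2$. The wedge case is excluded: by symmetry $f(\theta) = A \cos(2\theta) + \tfrac{1}{4}$ on $(-L/2, L/2)$ with $L = 2\pi - \alpha \in (\pi, 2\pi)$, and the endpoint conditions $A \cos L = -\tfrac{1}{4}$ together with $A \sin L = 0$ force $\sin L = 0$, contradicting $L \in (\pi, 2\pi)$. The main obstacle is the homogeneity step, which hinges on having a monotonicity formula adapted to the obstacle problem; once that is in hand the remainder is an elementary ODE analysis on subarcs of $S^1$.
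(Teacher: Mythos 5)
The paper does not actually prove this lemma (it is quoted from \cite{caffarelli1998obstacle}), so your self-contained route -- uniform $C^{1,1}$ compactness, homogeneity of limits via the Weiss monotonicity formula, and a polar-coordinate ODE classification of $2$-homogeneous convex solutions in the plane -- is a legitimate and standard way to prove the \emph{classification} part, and your execution of those three steps is essentially sound. Two small repairs: in the wedge case you may not assume $B=0$ ``by symmetry'' (the restriction of $u_0$ to the arc has no a priori symmetry); instead use all four endpoint conditions $f(\pm L/2)=f'(\pm L/2)=0$, which give $A\sin L=0$, $B\sin L=0$, $B\cos L=0$ and hence force $f\equiv\tfrac14$ or $\sin L=0$, both impossible for $L\in(\pi,2\pi)$. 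Also, the whole-plane case $u_0\equiv 0$ should be excluded (e.g.\ by the non-degeneracy estimate $\sup_{B_r(x_0)}u\geq u(x_0)+cr^2$ at a point $x_0\in\{u>0\}$, or by noting $W_\infty\geq \lim_{r\to0}W(r,u)>0$ since $0\in\p\mathcal C$), rather than listed as an admissible outcome.

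The genuine gap is that the lemma asserts convergence of the full family $u(rx)/r^2$ as $r\to\infty$, whereas your argument only produces and classifies \emph{subsequential} limits; nothing in the proposal rules out different subsequences converging to different quadratic polynomials, and the Weiss energy cannot distinguish them (all $p(x)=\tfrac12 x\cdot Ax$ with $A\geq0$, $\mathrm{tr}\,A=1$ have the same energy). The paper does use the existence of the limit (Definition \ref{x2mondef}(i), Proposition \ref{blowdonw2d}), so this must be addressed. It can be closed with modest extra work: since all subsequential limits share the energy $W_\infty$, they are either all half-space solutions or all polynomials; in the first case Remark \ref{halfspaceremark} shows $u$ itself is a half-space solution, so the blow-down is trivially unique; in the polynomial case, if $\mathcal C$ is bounded one writes $u=\tfrac14|x|^2+h+O(\log|x|)$ with $h$ a harmonic polynomial via the Newtonian potential of $\chi_{\mathcal C}$ and Liouville, while if $\mathcal C$ is unbounded one uses that $\mathcal C/r$ decreases to the recession cone of $\mathcal C$ as $r\to\infty$, so every subsequential limit vanishes on that fixed cone, which pins the limit down to the single polynomial $\tfrac12(x\cdot e^{\perp})^2$ determined by the recession direction $e$. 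Without some such argument (or a Monneau-type monotonicity at infinity), the statement as written is not proved.
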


\begin{remark}\label{halfspaceremark}
	Observe that if the blow-down limit of a global solution $u$ is a half-space solution, then $u$ must be a half-space solution. Indeed, supposing $u(0) =0$, \cite[Proposition 5.4]{petrosyan2012regularity} implies that $u$ is a monotone function of one variable, that is, there exists some $U \in C^{1,1}(\R)$ and a direction $e \in \mathbb{S}^{n-1}$ such that $u(x)=U(x\cdot e)$ where $U''(t) =\chi_{\{U>0\}}$, $U(0)=U'(0)=0$ and $U'(t)\geq 0$. This implies that $U(t)=\frac12 t_+^2$ and so $u$ is a half-space solution. 
\end{remark}

Since we are interested in classifying global solutions with unbounded coincidence sets, we will recall the following Lemma which establishes the monotonicity of global solutions whose coincidence set contains a line that goes to infinity in some direction \cite{caffarelli2000regularity}. Observe that by convexity, if $\mathcal{C}$ is unbounded then it must contain an infinite ray in some direction.

\begin{lemma}[\cite{caffarelli2000regularity}]\label{monotonicty:global}
	Let $u$ be a solution to \eqref{obstacleproblem} such that $\mathcal{C}$ contains a ray in the $e$-direction. Then either $(\p_eu)_+ \equiv 0$ or $(\p_e u)_- \equiv 0$. 
\end{lemma}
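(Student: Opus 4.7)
The plan is to set $v=\p_e u$ and apply the Alt--Caffarelli--Friedman (ACF) monotonicity formula to the pair $(v_+,v_-)$. Differentiating the equation $\Delta u=\chi_{\{u>0\}}$ yields $\Delta v=0$ in the open set $\{u>0\}$, while $v\equiv 0$ on $\mathcal{C}$ because $\nabla u$ vanishes on $\mathcal{C}$ (using the $C^{1,1}$ regularity of Lemma \ref{lemma:basic:properties}(i)). Hence $v_+$ and $v_-$ are non-negative, continuous, subharmonic functions on $\R^2$ with disjoint supports and $v_\pm(0)=0$, and the ACF functional
\begin{equation*}
	\Phi(r)=\frac{1}{r^4}\Bigl(\int_{B_r}|\nabla v_+|^2\,dx\Bigr)\Bigl(\int_{B_r}|\nabla v_-|^2\,dx\Bigr)
\end{equation*}
is non-decreasing in $r>0$. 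The universal bound $|\nabla v|\leq\|D^2u\|_{L^{\infty}}\leq C$ keeps $\Phi$ uniformly bounded, so $\Phi(r)\to L\in[0,\infty)$ as $r\to\infty$.

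The conclusion of the lemma is equivalent to $\Phi\equiv 0$, so I would argue by contradiction and assume that both $v_\pm\not\equiv 0$, which gives $L>0$. Using the uniform Lipschitz bound one can pass to a blow-down subsequence $v_\pm^{(r)}(x)=v_\pm(rx)/r\to V_\pm$ in $C^\alpha_{\loc}$; since the ACF quantity of $(V_+,V_-)$ must be the constant $L>0$ on $(0,\infty)$, the rigidity part of the ACF formula forces $V_\pm(x)=\alpha_\pm(x\cdot\omega)_\pm$ for some $\omega\in\mathbb{S}^1$ and $\alpha_\pm>0$. At the same time, by Lemma \ref{blowdownlimits}, $r^{-2}u(r\cdot)\to u_0$ with $u_0$ either a half-space solution or a non-negative quadratic polynomial, and the $C^{1,\alpha}_{\loc}$ convergence of $\nabla u$ gives $V_+-V_-=\p_e u_0$.

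A case analysis closes the argument. If $u_0=\tfrac12(x\cdot e')_+^2$, then $\p_e u_0=(e\cdot e')(x\cdot e')_+$ is supported on a half-plane, which immediately contradicts $\alpha_+\alpha_->0$. If $u_0$ is a non-negative quadratic polynomial with $\Delta u_0=1$, then $\p_e u_0$ is affine, so the piecewise-linear $V_+-V_-$ can have no kink and we must have $\alpha_+=\alpha_-=:\alpha$ with $V=\alpha\,x\cdot\omega$. At this point I would use the assumption that $\mathcal{C}$ contains the ray $\{te:t\geq 0\}$, which gives $v\equiv 0$ on the ray and hence $V(te)=0$ for all $t\geq 0$, forcing $\omega\perp e$. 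Integrating $\p_e u_0=\alpha(x\cdot\omega)$ along with $\Delta u_0=1$ produces, in coordinates $s=x\cdot e$, $t=x\cdot\omega$,
\begin{equation*}
	u_0(s,t)=\alpha s t+\tfrac12 t^2+at+b,
\end{equation*}
which is linear in $s$ with nonzero slope $\alpha t$ for any fixed $t$ with $\alpha t\neq 0$, contradicting $u_0\geq 0$. Thus $\alpha=0$ and $V\equiv 0$, contradicting $\alpha_\pm>0$. In either case we reach a contradiction, so $\Phi\equiv 0$ and the lemma follows.

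The main obstacle is the passage of the ACF quantity to the blow-down limit and the application of the rigidity part: the uniform Lipschitz bound on $v$ is essential to extract a non-trivial blow-down profile and to guarantee that the ACF functional of $(V_+,V_-)$ remains constant and positive (rather than degenerating), which is what unlocks the linear form of $V_\pm$ and makes the ray constraint usable.
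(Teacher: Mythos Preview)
The paper does not prove this lemma; it is quoted from \cite{caffarelli2000regularity} and used as a black box. Your argument is essentially the classical ACF-based proof, and it is correct.

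Two remarks. First, the detour through ACF rigidity is unnecessary: since $v_\pm^{(r)}\to V_\pm$ locally uniformly with $V_\pm\geq 0$ and $V_+V_-=0$, and since $v_+^{(r)}-v_-^{(r)}=r^{-1}\p_e u(r\cdot)\to\p_e u_0$ by the $C^{1,\alpha}_{\loc}$ convergence of Lemma~\ref{blowdownlimits}, you get $V_\pm=(\p_e u_0)_\pm$ directly. In the polynomial case $u_0=\tfrac12 x^{\!T}\!Ax$ with $A\geq 0$, the ray $\{te:t\geq 0\}\subset\{u_0=0\}$ forces $Ae=0$, hence $\p_e u_0\equiv 0$; in the half-space case $\p_e u_0$ has a sign. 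Either way $\Phi(\infty)=0$, so $\Phi\equiv 0$ and one of $v_\pm$ vanishes. This is the argument in the cited reference, and it works verbatim in all dimensions, whereas your final computation in coordinates $(s,t)$ is written for $n=2$.

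Second, the step ``the ACF quantity of $(V_+,V_-)$ must be the constant $L$'' needs strong $W^{1,2}_{\loc}$ convergence of $v_\pm^{(r)}$, not just $C^\alpha_{\loc}$; this follows from a Caccioppoli-type inequality for the subharmonic $v_\pm$ (exactly as in the paper's Lemma~\ref{differenceenergyinequality} or the Step~2 argument of Proposition~\ref{blowdownanalysis}). With the simplification above this point becomes moot, since you only need $\Phi(r;V_+,V_-)\leq L$, which lower semicontinuity already gives.
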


We will now restrict our attention to solutions of \eqref{obstacleproblem} for the case $n=2$. If $\mathcal{C}$ contained an infinite line, say $\{x_1 =0\}$, then $u$ is independent of the $x_2$-direction (see for instance \cite[Lemma 2.7]{eberle2022complete}). Therefore, if $\mathcal{C}$ has non-empty interior and contained $\{x_1 =0\}$, then $\mathcal{C}$ must be a strip in $\R^2$ and the coincidence set is trivially classified. If on the other hand $\mathcal{C}$ has empty interior and contained $\{x_1 =0\}$ then $\Delta u \equiv 1$ and we have that $u(x)= \frac{1}{2}x_1^2$ so that the coincidence set is just the line $\{x_1=0\}$. Therefore we do not need to consider the case when $\mathcal{C}$ contains an infinite line. 

Now if $u$ is monotone, and $\mathcal{C}$ is convex and does not contain an infinite line, we can assume, up to a rotation and translation, that $\mathcal{C} \cap \{x_2< 0\} = \emptyset$. Moreover, since the free boundary is analytic when $\mathcal{C}$ has non-empty interior, we can assume that $\mathcal{C}\cap \{x_2\leq 0\} = \{0\}$. Indeed, if $\mathcal{C} \cap \{x_2\leq0\} $ contained a line segment, then the free boundary would have to be $\{x_2=0\}$ by analyticity, in which case $u$ would be the half-space solution.

Along with Remark \ref{halfspaceremark}, the above considerations imply that we only need to classify a certain class of solutions to \eqref{obstacleproblem}. The focus of this work will therefore be on classifying \textit{$x_2$-monotone solutions} as in the following definition. 

\begin{definition}[$x_2$-monotone solutions]\label{x2mondef}
	A global solution $u$ to the obstacle problem \eqref{obstacleproblem} is an $x_2$-monotone solution if
	\begin{enumerate}[(i)]
		\item the blow-down limit is $p(x)=\frac{1}{2}x_1^2$,
		\item $\p_2 u \leq 0$,
		\item $\mathcal{C} \cap \{x_2\leq 0\} = \{0\}$,
		\item and $\mathcal{C}$ has non-empty interior.
	\end{enumerate}
\end{definition} 

In order to classify the second order blow-down we will need the following Hölder bound for the difference $u-p$ when $n=2$  \cite[Lemma A.5]{franceschini2021c}. It is important to note that this Lemma is only true in higher dimensions if $p(x)$ is kept as $\frac12 x_1^2$. In particular, this is not true for the difference between $u$ and its blow-down if $n\geq 3$. 

\begin{lemma}\label{holderboundlemma}
	There exists some $\beta \in (0,1)$ and a universal constant $C$ so that 
	\begin{equation}
		\label{holderbound}
		\norm{(u-p)_r}_{C^{0,\beta}(B_{1/2})} \leq C\norm{(u-p)_r}_{L^2(B_1)}.
	\end{equation}
\end{lemma}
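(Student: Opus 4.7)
The plan is to split $\norm{(u-p)_r}_{C^{0,\beta}(B_{1/2})}$ into its sup-norm and seminorm parts and handle them separately. For the sup-norm, observe that $p(x) = \frac{1}{2}x_1^2$ is itself a solution of \eqref{obstacleproblem}: its coincidence set $\{x_1=0\}$ has Lebesgue measure zero, so $\Delta p = 1 = \chi_{\{p>0\}}$ almost everywhere. Hence Lemma \ref{differenceenergyinequality} applies with $u_1 = u$, $u_2 = p$, and $\abs{(u-p)_r}$ is subharmonic on $\R^2$. The sub-mean value inequality applied on $B_{1/2}(x_0)\subset B_1$ for each $x_0\in B_{1/2}$ then gives $\norm{(u-p)_r}_{L^\infty(B_{1/2})}\le C\norm{(u-p)_r}_{L^2(B_1)}$ with universal constant.

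For the H\"older seminorm I would argue by compactness and contradiction. Suppose the asserted estimate fails. Then there exist obstacle solutions $u_k$ and scales $r_k>0$ such that, setting $\eps_k:=\norm{(u_k-p)_{r_k}}_{L^2(B_1)}$ and $v_k(x):=(u_k-p)(r_kx)/\eps_k$, one has $\norm{v_k}_{L^2(B_1)}=1$ while $[v_k]_{C^{0,\beta}(B_{1/2})}\to\infty$, and $\Delta v_k = -(r_k^2/\eps_k)\chi_{\{u_k(r_k\,\cdot)=0\}}$. The Caccioppoli estimate of Lemma \ref{differenceenergyinequality} applied to $u_k,p$ at scale $r_k$ and then rescaled yields $\norm{\nabla v_k}_{L^2(B_{1/2})}\le C$, so the $v_k$ are uniformly bounded in $H^1(B_{1/2})$ and, by Rellich, $v_k\to v_\infty$ in $L^2(B_{1/2})$ along a subsequence with $\norm{v_\infty}_{L^2(B_1)}=1$.

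Consider a dichotomy on $r_k^2/\eps_k$. If it stays bounded, $\abs{\Delta v_k}$ is uniformly bounded and standard interior Calder\'on--Zygmund estimates combined with Sobolev embedding produce uniform $C^{1,\alpha}$ bounds on $v_k$, contradicting the blow-up of the seminorm for any $\beta<\alpha$. If instead $r_k^2/\eps_k\to\infty$, pass to the blow-down rescalings $U_k(x):=u_k(r_kx)/r_k^2$. These are obstacle solutions uniformly in $C^{1,1}$ with $\norm{U_k-p}_{L^2(B_1)}=\eps_k/r_k^2\to 0$, so Lemma \ref{blowdownlimits} forces $U_k\to p$ in $C^{1,\alpha}_{\loc}$. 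Consequently, on every set $\{\abs{x_1}\ge\eta\}$ one has $U_k>0$ and hence $\Delta v_k=0$ for $k$ large; on $\{x_1=0\}$ one has $v_k(0,x_2)=u_k(0,r_kx_2)/\eps_k\ge 0$. In the limit, $v_\infty$ is harmonic in $\{x_1>0\}$ and in $\{x_1<0\}$ separately and non-negative on $\{x_1=0\}$, i.e., $v_\infty$ is a solution of the thin obstacle problem with obstacle surface $\{x_1=0\}$, which enjoys a universal $C^{1,1/2}$ regularity.

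The main obstacle is upgrading the regularity of the weak limit $v_\infty$ to a uniform $C^{0,\beta}$ bound on $v_k$ itself. Quantitatively, one must use the superharmonicity of $v_k$, the uniform $H^1$ bound, the harmonicity of $v_k$ outside the shrinking strip $\{\abs{x_1}\le\delta_k\}$, and stability for the thin obstacle problem to produce a uniform H\"older estimate that contradicts $[v_k]_{C^{0,\beta}(B_{1/2})}\to\infty$. This reduction to a two-dimensional thin obstacle problem is exactly where the restriction $n=2$ and the specific choice of blow-down $p=\frac{1}{2}x_1^2$ enter, consistent with the remark immediately following the lemma that the analogous bound fails for the difference between $u$ and its blow-down in dimensions $n\ge 3$.
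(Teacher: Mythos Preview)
Your sup-norm estimate is correct and matches the paper. The H\"older seminorm argument, however, contains a gap that you yourself flag in the final paragraph: regularity of the weak limit $v_\infty$ does \emph{not} contradict $[v_k]_{C^{0,\beta}(B_{1/2})}\to\infty$. From the $H^1$ bound and Rellich you obtain only $L^2$ (or, with a bit more work, $L^\infty$) convergence along a subsequence. The H\"older seminorm is not lower semicontinuous under such convergence, so a perfectly smooth limit is compatible with unbounded seminorms along the sequence. Compactness arguments of this flavour normally require renormalising by the seminorm itself and blowing up around the pair of points realising it, not renormalising by the $L^2$ norm; as written, the case $r_k^2/\eps_k\to\infty$ does not close. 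A secondary issue: you invoke Lemma~\ref{blowdownlimits} to force $U_k\to p$, but that lemma concerns a single fixed solution as $r\to\infty$, whereas here the $u_k$ may vary (the constant is asserted to be universal). This is repairable by a direct $C^{1,1}$ compactness argument, but it needs to be said.

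The paper's proof takes a completely different, direct route that avoids compactness. The key structural observation is that $p=\tfrac12 x_1^2$ is independent of $x_2$, so $\p_2 w_r=\p_2 u_r$ and the difference quotients $\delta_2^\pm w_r(x)=t^{-1}\bigl(w_r(x\pm te_2)-w_r(x)\bigr)$ agree with those of $u_r$. Consequently $\Delta(\delta_2^\pm w_r)\le 0$ in $\{u_r>0\}$ and $\delta_2^\pm w_r\ge 0$ on $\{u_r=0\}$, so $\min\{\delta_2^\pm w_r,0\}$ is superharmonic; the weak minimum principle combined with the Caccioppoli bound $\|\nabla w_r\|_{L^2}\le C\|w_r\|_{L^2}$ yields a uniform Lipschitz bound in the $x_2$-direction, $[\delta_2 w_r]_1\le C\|w_r\|_{L^2(B_8)}$. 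One then feeds this into an anisotropic interpolation inequality (Lemma~\ref{lemma:holder:characterisation}, from \cite{franceschini2021c}) which, in dimension two, converts Lipschitz control in one coordinate direction together with $L^2$ control of the full gradient into a $C^{0,\beta}$ bound. This is exactly where $n=2$ and the specific form $p=\tfrac12 x_1^2$ enter.
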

We provide the proof  of Lemma \ref{holderboundlemma} in Appendix \ref{holderboundapp}. 

\begin{remark}\label{signedremark}
	Two key properties we will use is that $(u-p)\Delta (u-p) \geq 0$ and that $\p_{11}(u-p)\leq 0$. Indeed, 
	\begin{equation*}
		(u-p)\Delta(u-p) = p\chi_{\{u=0\}} \geq 0
	\end{equation*}
	and 
	\begin{equation*}
		\p_{11}(u-p)= \Delta (u-p) - \p_{22} (u-p) = -\chi_{\{u=0\}} - \p_{22} u \leq 0,
	\end{equation*}
	using the convexity of $u$. 
\end{remark}

Finally, we recall that for $w \in C^{1,1}$ the Almgren frequency functional is given by
\begin{equation*}
	\phi(r,w) = r\frac{\int_{B_r} \abs{\nabla w}^2}{\int_{\p B_r }w^2d\sigma}
\end{equation*}
and we observe that we have the following upper bound (see \cite[Proposition 4.1]{eberleduke2022}).
\begin{proposition}[Frequency Estimate]
	For all $r > 0$ we have that
	\begin{equation}\label{frequencybound}
		\phi(r,u-p) \leq 2
	\end{equation}
\end{proposition}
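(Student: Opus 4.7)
My plan is to establish a quasi-monotonicity formula for $\Phi(r) := \phi(r, u-p)$ and combine it with the blow-down information from Lemma \ref{blowdownlimits}. Set $w = u-p$ and introduce
\[
H(r) := \int_{\partial B_r} w^2\, d\sigma, \qquad D(r) := \int_{B_r}\abs{\nabla w}^2\, dx, \qquad I(r) := \int_{B_r} w\,\Delta w\, dx.
\]
By Remark \ref{signedremark}, $\Delta w = -\chi_{\{u=0\}}$ and $w = -p$ on $\{u=0\}$, so $I(r) = \int_{B_r\cap\{u=0\}} p \geq 0$. Integration by parts gives $\int_{\partial B_r} w\,\partial_\nu w = D+I$, and differentiation of $H$ in polar coordinates yields $H'(r) = H(r)/r + 2(D(r)+I(r))$ (here $n=2$), which is the standard identity underlying the Almgren machinery.

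The key ingredient is a Pohozaev-type identity tuned to this setting. Since $u \in C^{1,1}$ forces $\nabla u = 0$ on $\{u=0\}$, we have $\nabla w = -\nabla p$ there, and a direct computation gives $\int_{B_r}(x\cdot\nabla w)\Delta w = 2I(r)$. The classical Rellich integration by parts independently yields
\[
\int_{B_r}(x\cdot\nabla w)\Delta w = r\int_{\partial B_r}(\partial_\nu w)^2\, d\sigma - \frac{r}{2}D'(r).
\]
Equating these, inserting the result into the Cauchy-Schwarz bound $(D+I)^2 \leq H \int_{\partial B_r}(\partial_\nu w)^2$, and using $\Phi'(r) = (r/H^2)\bigl[D'H - 2D(D+I)\bigr]$ produces the crucial differential inequality
\[
\Phi'(r) \geq \frac{2I(r)}{H(r)}\left(\Phi(r) + \frac{rI(r)}{H(r)} - 2\right).
\]
Since $I \geq 0$, this forces $\Phi'(r) \geq 0$ whenever $\Phi(r) \geq 2$.

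To conclude I argue by contradiction. Suppose $\Phi(r_0) > 2$ at some $r_0 > 0$. By continuity of $\Phi$, the connected component of $\{r > 0 : \Phi(r) > 2\}$ containing $r_0$ is an open interval $(a,b)$; a bounded component is ruled out because $\Phi(a)=\Phi(b)=2$ combined with $\Phi' \geq 0$ on the interior forces $\Phi \equiv 2$, contradicting $\Phi(r_0) > 2$. Hence $b = \infty$ and $\Phi(r) \geq \Phi(r_0)$ for all $r \geq r_0$. Integrating $(\log H)'(r) \geq 1/r + 2\Phi(r)/r$ from $r_0$ to $R$ gives $H(R) \gtrsim R^{1+2\Phi(r_0)}$, so $H(R)/R^5 \to \infty$ since $\Phi(r_0) > 2$. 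But Lemma \ref{blowdownlimits} gives $r^{-2}(u-p)(rx) \to 0$ in $C^{1,\alpha}_{\loc}$, forcing $H(R) = o(R^5)$: a contradiction.

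The main obstacle is the Pohozaev step: the coefficient of the positive term $I(r)$ has to come out in exactly the right proportion so that, after the Cauchy-Schwarz step, the resulting differential inequality becomes sign-definite precisely at the critical threshold $\Phi = 2$. Once that alignment is in place, the remainder is a standard Almgren-type argument combining quasi-monotonicity above the threshold with the sharp asymptotic growth provided by the fact that $p$ is the quadratic blow-down of $u$.
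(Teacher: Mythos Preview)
Your argument is correct. The paper does not supply its own proof of this proposition; it simply cites \cite[Proposition 4.1]{eberleduke2022}. What you have written is precisely the standard derivation behind that cited result: the identity $(x\cdot\nabla w)\Delta w = 2w\Delta w$ (which holds because $p$ is $2$-homogeneous and $\Delta w$ is supported where $\nabla u=0$) feeds into the Rellich identity to produce exactly the differential inequality
\[
\Phi'(r)\ \ge\ \frac{2I(r)}{H(r)}\Bigl(\Phi(r)+\frac{rI(r)}{H(r)}-2\Bigr),
\]
and the blow-down $r^{-2}(u-p)(r\,\cdot)\to 0$ then rules out $\Phi(r_0)>2$ via the growth contradiction $H(R)\gtrsim R^{1+2\Phi(r_0)}$ versus $H(R)=o(R^5)$. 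All the constants line up as you computed (in particular the coefficient $2$ in $\int_{B_r}(x\cdot\nabla w)\Delta w = 2I(r)$, coming from the homogeneity of $p$, is exactly what makes the threshold equal to $2$), and the connected-component argument forcing $b=\infty$ is clean. In short, you have reconstructed the proof that the paper outsources.
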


\subsection{Paraboloid solutions}
For each $\gamma > 0$ we define the paraboloid $P_{\gamma} = \{-\gamma^{1/2}\sqrt{y_2}\leq y_1\leq\gamma^{\frac{1}2}\sqrt{y_2}, y_2\geq 0\}=\gamma \{-\sqrt{y_2}\leq y_1 \leq \sqrt{y_2},y_2 \geq 0\}$. We will denote $P=P_1$ and so $P_{\gamma}=\gamma P$. 
The following Proposition has already been shown with complex analysis techniques in \cite{sakai1981} as well as a potential theory approach in \cite{shahgholian1992quadrature}. 
\begin{proposition}[Existence of Paraboloid Solutions]\label{paraboloidsolutions}
	Given $\gamma >0$ there exists a solution to \eqref{obstacleproblem} denoted by $u_{\gamma P}$ such that $\{u_{\gamma P}=0\} = P_{\gamma}$. 
\end{proposition}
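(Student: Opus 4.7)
My plan is to construct $u_{\gamma P}$ as the locally uniform limit of obstacle solutions whose coincidence sets are ellipses approximating $P_\gamma$ from within, using the existence of such bounded-coincidence-set solutions from \cite{friedman1986characterization}. By the rescaling $u_{\gamma P}(x) := \gamma^2 u_P(x/\gamma)$, which preserves \eqref{obstacleproblem} and maps $\{u_P = 0\} = P$ to $\gamma P = P_\gamma$, it suffices to treat the case $\gamma = 1$.

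For each $t > 0$, let $E_t := \{x_1^2/t + (x_2-2t)^2/(4t^2) \leq 1\}$. Direct computation shows that the lower arc of $\partial E_t$ satisfies $x_2 = 2t - 2t\sqrt{1-x_1^2/t} \to x_1^2$ locally uniformly as $t \to \infty$, that the origin lies on $\partial E_t$ for every $t$, and that $E_t \subset P$ for $t$ large. By \cite{friedman1986characterization} there exists a global solution $v_t$ of \eqref{obstacleproblem} with $\{v_t = 0\} = E_t$. Since $0 \in \partial E_t$ is a free boundary point, Lemma \ref{lemma:basic:properties}(i) yields $\|D^2 v_t\|_{L^\infty(\R^2)} \leq C$ with $C$ universal, and together with $v_t(0) = 0$ and $\nabla v_t(0) = 0$ this bounds $\{v_t\}$ uniformly in $C^{1,1}_{\loc}(\R^2)$. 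Extracting a subsequence $t_k \to \infty$, I obtain a locally uniform limit $u = \lim_k v_{t_k}$ in $C^{1,\alpha}_{\loc}$ for any $\alpha < 1$, and by the standard stability of the obstacle problem $u$ is itself a global solution of \eqref{obstacleproblem}. The inclusion $P \subset \{u = 0\}$ is immediate: any $x \in P^\circ$ lies in $E_t$ for large $t$, so $v_{t_k}(x) = 0$ and hence $u(x) = 0$.

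The main obstacle is the reverse inclusion $\{u=0\} \subset P$, i.e.\ that no spurious coincidence develops in the limit. To handle this I would combine two ingredients: first, verify that the blow-down of $u$ equals $p(x) = \frac{1}{2}x_1^2$ rather than the $\frac{1}{4}|x|^2$ that is the blow-down of each individual $v_t$ (the elongation ratio of $E_t$, which behaves like $\sqrt{t}$, pushes the far-field regime of $v_t$ out to infinity as $t \to \infty$, so at any fixed scale $v_t$ resembles a slab-type solution); second, use the convexity of $\{u = 0\}$ and the analyticity of $\partial \{u=0\}$ from Lemma \ref{lemma:basic:properties} together with barrier comparisons against half-space solutions in horizontal slabs $\{c_1 \leq x_2 \leq c_2\}$ to rule out analytic continuation of $\partial \{u=0\}$ beyond $\partial P$. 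The blow-down identification is the hardest step, since it requires uniform estimates on $v_t$ at intermediate scales bridging the ellipse scale $\sqrt{t}$ and unit scale; equivalently, one must control how the logarithmic correction inherent to two-dimensional bounded-coincidence obstacle solutions behaves as the coincidence set degenerates into an unbounded region.
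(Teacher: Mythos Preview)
The paper does not supply its own proof of this proposition; it cites \cite{sakai1981} (complex-analytic) and \cite{shahgholian1992quadrature} (potential-theoretic, via null quadrature domains) and moves on. So there is no paper proof to compare against.

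Your ellipse-approximation strategy is a natural alternative route, and the scaling reduction, the construction of the $E_t$, the compactness step, and the inclusion $P\subset\{u=0\}$ are all fine. The reverse inclusion $\{u=0\}\subset P$, however, is a genuine gap that you flag but do not close, and your sketched resolution has real difficulties. First, identifying the blow-down as $\tfrac12 x_1^2$ requires (via Remark~\ref{halfspaceremark}) knowing that $u$ is not itself a half-space solution, and at this stage you have not excluded that the limit coincidence set is the half-plane $\{x_2\ge 0\}$; even once the blow-down is known, it constrains $\{u=0\}$ only asymptotically and does not by itself force $\{u=0\}=P$. Second, the phrase ``barrier comparisons against half-space solutions in horizontal slabs'' is too vague to be an argument: what is actually needed is a uniform-in-$t$ pointwise lower bound $v_t(x_0)\ge c(x_0)>0$ at each fixed $x_0\notin P$, and standard nondegeneracy (which yields $\sup$-bounds from free-boundary points, not pointwise bounds at prescribed exterior points) does not deliver this directly. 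One clean way to close the gap is to use the explicit closed form for ellipse solutions in two dimensions and pass to the limit in the formula; another is to define the candidate $u:=p+V_P$ via the generalised Newtonian potential of Appendix~\ref{section:appendix:expansion} and verify directly that it is nonnegative with zero set exactly $P$, which is essentially the route of \cite{shahgholian1992quadrature}.
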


\begin{proposition}\label{blowdonw2d}
	For each $\gamma > 0$ we have that
	\begin{equation}\label{paraboloidblowdown}
		\lim_{r\to\infty} \frac{u_{\gamma P}(rx)}{r^2} = \frac{1}{2}x_1^2.
	\end{equation}
\end{proposition}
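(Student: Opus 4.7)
The plan is to show that any subsequential blow-down limit of $u_{\gamma P}$ must equal $\tfrac{1}{2}x_1^2$, and then conclude by uniqueness of the limit.

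First, since $0 \in \partial P_\gamma$, the rescalings $u_r(x) := r^{-2} u_{\gamma P}(rx)$ are uniformly $C^{1,1}$ on compact sets by Lemma \ref{lemma:basic:properties}. Arzel\`a--Ascoli then gives that along any sequence $r_k \to \infty$ one can extract a subsequence (not relabelled) such that $u_{r_k} \to u_0$ in $C^{1,\alpha}_{\loc}(\R^2)$. By Lemma \ref{blowdownlimits}, $u_0$ is either a half-space solution or a non-negative quadratic polynomial.

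Next I would rule out the half-space case by tracking the Lebesgue measure of the coincidence sets. The coincidence set of $u_{r_k}$ is $r_k^{-1} P_\gamma$, and a direct calculation gives $|r_k^{-1}P_\gamma \cap B_R| \to 0$ as $r_k \to \infty$ for every fixed $R$ (the rescaled paraboloid flattens onto the ray $\{0\}\times [0,\infty)$). Hence $\Delta u_{r_k} = \chi_{\{u_{r_k}>0\}} \to 1$ in $L^1_{\loc}$, while $C^{1,\alpha}$ convergence gives $\Delta u_{r_k} \to \Delta u_0$ distributionally. Therefore $\Delta u_0 \equiv 1$ almost everywhere, which excludes half-space solutions (whose Laplacian equals $1$ only on a half-plane) and forces $u_0$ to be a non-negative quadratic polynomial with $\Delta u_0 \equiv 1$.

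Then I would identify this polynomial. From $u_{\gamma P}(0)=0$ and $\nabla u_{\gamma P}(0)=0$ (the latter because $u_{\gamma P}\geq 0$ attains its minimum at the origin) one obtains $u_0(0)=0$ and $\nabla u_0(0)=0$, so, together with $\Delta u_0=1$ and non-negativity, $u_0(x) = \tfrac{1}{2} x^{T} A x$ for some positive semidefinite matrix $A$ with $\operatorname{tr} A = 1$. The key geometric input is that for each $t \geq 0$ the point $(0,t)$ lies in $r_k^{-1}P_\gamma$ for every $r_k$, so $u_0 \equiv 0$ on the whole ray $\{0\}\times[0,\infty)$. This forces $A_{22}=0$; positive semidefiniteness then forces $A_{12}=0$, and the trace condition gives $A_{11}=1$, so $u_0(x) = \tfrac{1}{2}x_1^2$.

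Since every subsequential limit coincides, the full limit $\lim_{r\to\infty} r^{-2} u_{\gamma P}(rx) = \tfrac{1}{2}x_1^2$ exists in $C^{1,\alpha}_{\loc}(\R^2)$. I expect the delicate point to be the exclusion of the half-space case, because it requires combining the distributional convergence of the Laplacians with the measure-theoretic shrinking of the coincidence sets; the subsequent identification of the quadratic polynomial from its vanishing on the preserved ray $\{0\}\times[0,\infty)$ is essentially algebraic.
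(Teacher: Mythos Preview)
Your proof is correct, but it follows a different route from the paper's in both of its two main steps.

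For ruling out the half-space case, the paper simply invokes Remark~\ref{halfspaceremark}: if the blow-down were a half-space solution then $u_{\gamma P}$ itself would be a half-space solution, contradicting the fact that its coincidence set is the paraboloid $P_\gamma$. You instead compute directly that $|r_k^{-1}P_\gamma\cap B_R|\to 0$, pass this through the distributional Laplacian, and conclude $\Delta u_0\equiv 1$. Your argument is more hands-on and self-contained; the paper's is a one-line appeal to an already established structural fact.

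For identifying the quadratic polynomial, the paper observes that $\{x_1=0,\,x_2\geq 0\}\subset\{u_0=0\}$, applies Lemma~\ref{monotonicty:global} to $u_0$ to obtain $x_2$-monotonicity, and concludes that $u_0$ cannot contain $x_1x_2$ or $x_2^2$ terms. You use the same vanishing on the ray but convert it directly into the linear-algebraic condition $A_{22}=0$, then use positive semidefiniteness to kill $A_{12}$ and the trace condition to fix $A_{11}=1$. Your argument is again more elementary and avoids the monotonicity lemma; the paper's stays within the PDE toolkit it has already set up. Both approaches exploit exactly the same geometric information (the preserved ray in the coincidence set), just processed differently.
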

\begin{proof} 
	It is clear by Remark \ref{halfspaceremark} that $u_0 := \lim_{r \to \infty} r^{-2}u_{\gamma P}(r\cdot)$ is a quadratic polynomial. Since $\{x_1 =0, x_2\geq 0\} \subset \{u_0=0\}$ we must have that $u_0$ is monotone in the $x_2$-direction by Lemma \ref{monotonicty:global} and hence $u_0$ cannot contain multiples of $x_1x_2$ or $x_2^2$. Since $n=2$ this means that $u_0(x)=\frac{1}{2}x_1^2$.
\end{proof}

It is shown in \cite{eberle2022complete} that solutions to \eqref{obstacleproblem} with $n\geq 3$ can be expressed in terms of a generalised Newtonian potential. For our purposes it is enough to establish this result for paraboloid solutions. 

Defining
\begin{equation*}
	G(x,y)=\log(\abs{x-y})-\log(\abs{y})+\frac{x\cdot y}{\abs{y}^2},
\end{equation*}
the generalised Newtonian potential for a set $M\subset \R^2$ is then
\begin{equation*}
	V_M(x)= -\frac{1}{2\pi} \int_M G(x,y)dy.
\end{equation*}
We have the following important scaling property.
\begin{lemma}\label{scaling}
	For each $\gamma > 0$ we have that 
	\begin{equation*}
		V_M(\gamma x) = \gamma^2 V_{\frac{1}{\gamma}M}(x)
	\end{equation*}
\end{lemma}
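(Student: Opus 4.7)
The plan is to prove the scaling identity by a direct change of variables in the integral definition of $V_M$, exploiting the fact that the kernel $G(x,y)$ is itself invariant under the joint rescaling $(x,y)\mapsto(\gamma x,\gamma y)$.

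First I would start from
\begin{equation*}
V_M(\gamma x) = -\frac{1}{2\pi}\int_M G(\gamma x, y)\,dy
\end{equation*}
and perform the substitution $y=\gamma z$, which sends the domain $M$ onto $\tfrac{1}{\gamma}M$ and contributes a Jacobian factor $\gamma^2$. This yields
\begin{equation*}
V_M(\gamma x) = -\frac{\gamma^2}{2\pi}\int_{\frac{1}{\gamma}M} G(\gamma x, \gamma z)\,dz,
\end{equation*}
so the whole statement reduces to the pointwise identity $G(\gamma x,\gamma z)=G(x,z)$.

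The second step is to check this scale-invariance of $G$ directly from its definition. Writing
\begin{equation*}
G(\gamma x,\gamma z)=\log(\abs{\gamma x-\gamma z})-\log(\abs{\gamma z})+\frac{\gamma x\cdot \gamma z}{\abs{\gamma z}^2},
\end{equation*}
the two logarithmic terms each pick up a $\log\gamma$ which cancels in the difference, while the last term is degree-zero homogeneous in $z$ and total degree zero in $(x,z)$, so it is unchanged. Hence $G(\gamma x,\gamma z)=\log(\abs{x-z})-\log(\abs{z})+\tfrac{x\cdot z}{\abs{z}^2}=G(x,z)$.

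Combining the two steps gives $V_M(\gamma x)=\gamma^2 V_{M/\gamma}(x)$. There is no real obstacle here: the only subtlety is noticing that the definition of $G$ has been designed precisely so that the scale-breaking term $\log\gamma$ coming from $\log\abs{x-y}$ is subtracted off by $-\log\abs{y}$, and that the linear correction $x\cdot y/\abs{y}^2$ is itself scale-invariant. This cancellation is what makes the statement true and is presumably why $G$ is written in this normalized form to begin with.
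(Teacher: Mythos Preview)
Your proof is correct and essentially the same as the paper's. The only cosmetic difference is the order of operations: the paper first establishes $G(\gamma x,y)=G(x,y/\gamma)$ and then changes variables, whereas you change variables first and then verify $G(\gamma x,\gamma z)=G(x,z)$; these are the same computation.
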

\begin{proof}
	We first observe that
	\begin{align*}
		G(\gamma x, y)
		&=\log(\abs{\gamma x-y})-\log(\abs{y})+\frac{\gamma x\cdot y}{\abs{y}^2}\\
		&= \log(\abs{x-\frac{y}{\gamma}}) + \log(\gamma) -\log(\abs{\frac{y}{\gamma}}) - \log(\gamma)+\frac{x\cdot \frac{y}{\gamma}}{\abs{\frac{y}{\gamma}}^2}\\
		&= G(x,\frac{y}{\gamma}).
	\end{align*}
	Using this scaling we have that
	\begin{align*}
		V_M(\gamma x)
		&= -\frac{1}{2\pi}\int_M G(x,\frac{y}{\gamma}) dy \\
		&= -\frac{1}{2\pi}\int_{\frac{1}{\gamma}M} G(x,z) \gamma^2dz \\
		&= \gamma^2 V_{\frac{1}{\gamma}M}(x).
	\end{align*}
\end{proof}
We can now state the expansion for paraboloid solutions.
\begin{proposition}\label{expansionthm}
	Let $u_{\gamma P}$ be a paraboloid solution of \eqref{obstacleproblem} in $\R^2$. Then 
	$$u_{\gamma P}(x)= p(x) + V_{\gamma P}(x)$$ for all $x \in \R^2$. 
\end{proposition}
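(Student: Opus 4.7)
The plan is to show that
$$h(x) := u_{\gamma P}(x) - p(x) - V_{\gamma P}(x)$$
vanishes identically on $\R^2$. I first check that $h$ is harmonic, then that it grows slower than quadratically, so that Liouville's theorem forces it to be affine, and finally pin down the affine coefficients using the good behaviour of $u_{\gamma P}$, $p$ and $G$ at the origin.

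For harmonicity, the coincidence set of $u_{\gamma P}$ is exactly $\gamma P$, so $\Delta(u_{\gamma P}-p) = \chi_{\{u_{\gamma P}>0\}} - 1 = -\chi_{\gamma P}$ in the distributional sense. On the other hand, in the definition $G(x,y) = \log|x-y| - \log|y| + x\cdot y/|y|^2$, the last two terms are, for fixed $y\neq 0$, a constant and a linear function in $x$ respectively, so $\Delta_x G(\cdot,y) = \Delta_x \log|x-y| = 2\pi\delta_y$. Differentiating under the integral defining $V_{\gamma P}$ then yields $\Delta V_{\gamma P} = -\chi_{\gamma P}$, so $\Delta h = 0$ on $\R^2$.

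For the growth bound, Proposition \ref{blowdonw2d} combined with the $C^{1,\alpha}_{\loc}$ convergence in Lemma \ref{blowdownlimits} gives $(u_{\gamma P}-p)(rx)/r^2 \to 0$ locally uniformly in $x$, so $\sup_{|y|=R}|u_{\gamma P}-p|(y) = o(R^2)$ as $R\to\infty$. For the potential, a Taylor expansion of $\log|x-y|$ about $|y|=\infty$ cancels precisely the two added terms and produces $|G(x,y)| \leq C|x|^2/|y|^2$ once $|y|\geq 2|x|$; combining this decay with the scaling $V_{\gamma P}(rx)=r^2 V_{(\gamma/r)P}(x)$ from Lemma \ref{scaling} and splitting the integral over $(\gamma/r)P$ into the part $|y|\leq 2|x|$ (whose measure shrinks like $(\gamma/r)^{1/2}|x|^{3/2}$) and the part $|y|>2|x|$ (where the $|y|^{-2}$ decay against the paraboloid's $y_2^{1/2}$-width yields $C(\gamma/r)^{1/2}|x|^{3/2}$) shows $V_{(\gamma/r)P}(x)\to 0$ locally uniformly in $x$. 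Hence $V_{\gamma P}(y) = O(|y|^{3/2})$ uniformly in direction. Writing $|h|\leq o(R^2)+CR^{3/2}$ on $B_R$ and applying the standard interior derivative estimate for harmonic functions on $B_R(x_0)$, letting $R\to\infty$ gives $D^2 h(x_0)=0$ at every $x_0$, so $h(x)=a+b\cdot x$.

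To determine $a$ and $b$, I evaluate at $0$. Directly from the definition, $G(0,y)=\log|y|-\log|y|+0=0$ and $\nabla_x G(0,y) = -y/|y|^2 + y/|y|^2 = 0$, so $V_{\gamma P}(0)=0$ and $\nabla V_{\gamma P}(0)=0$. Since $0\in\p P_{\gamma}$ is a free boundary point, Lemma \ref{lemma:basic:properties} gives $u_{\gamma P}(0)=\nabla u_{\gamma P}(0)=0$, and $p(0)=\nabla p(0)=0$ is trivial. Hence $a=b=0$ and $h\equiv 0$. The main obstacle I expect is the sublinear growth bound on $V_{\gamma P}$: the integration domain $\gamma P$ is unbounded, and the argument rests on the cancellation built into $G$, which brings the leading $\log$ behaviour down to $O(|y|^{-2})$ and makes the integral over the thinning paraboloid $(\gamma/r)P$ converge to zero.
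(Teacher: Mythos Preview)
Your argument follows the same overall route as the paper: show $h=u_{\gamma P}-p-V_{\gamma P}$ is harmonic, show it is subquadratic, apply Liouville, and kill the affine part using $G(0,y)=\nabla_x G(0,y)=0$ together with $u_{\gamma P}(0)=\nabla u_{\gamma P}(0)=0$.

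The only real difference is in how the growth of $V_{\gamma P}$ is controlled. The paper does a direct three-region splitting of $\gamma P$ (inside $B_{2\hat r}$, the annulus $B_{2|x|}\setminus B_{2\hat r}$, and outside $B_{2|x|}$) and obtains $|V_{\gamma P}(x)|\le C(1+|x|^{7/4})$. You instead invoke the scaling identity $V_{\gamma P}(rx)=r^2 V_{(\gamma/r)P}(x)$ and split $(\gamma/r)P$ into $\{|y|\le 2|x|\}$ and $\{|y|>2|x|\}$. Your far-part bound is fine and matches the paper's, but your near-part claim is not quite justified: you only cite that the measure of $(\gamma/r)P\cap B_{2|x|}$ shrinks like $(\gamma/r)^{1/2}|x|^{3/2}$, yet $G(x,\cdot)$ is \emph{not} bounded there (there is a $\log|x-y|$ singularity at $y=x$ and a $|x|/|y|$ singularity at $y=0$), so a pure measure bound does not give the rate $(\gamma/r)^{1/2}$. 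What does go through immediately is that $G(x,\cdot)\in L^1_{\loc}$, so dominated convergence yields $V_{(\gamma/r)P}(x)\to 0$ locally uniformly, hence $V_{\gamma P}=o(R^2)$ --- and this weaker statement is all the Liouville step needs. So your claimed $O(R^{3/2})$ is unproved as written, but the proof survives with $o(R^2)$ in its place. The paper also justifies $\Delta V_{\gamma P}=-\chi_{\gamma P}$ by first working with $V_{\gamma P\cap B_\rho}$ and passing to the limit, which is the rigorous version of your ``differentiating under the integral''.
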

The proof of Proposition \ref{expansionthm} is deferred to Appendix \ref{section:appendix:expansion}. 

\subsection{The thin obstacle problem}
Finally, an important role will be played in our analysis by homogeneous global solutions to the thin obstacle problem as these appear as the second blow-down limit. We recall here that for $R>0$, $v$ is a solution to the thin obstacle problem with zero obstacle in $B_R \subset \R^2$ if
	\begin{align}\label{thinobstacleglobal}
	\begin{split}
	v\Delta v &= 0 \text{ and } \Delta v \leq 0 \text{ in } B_R\\
	v &\geq  0 \text{ on } \{x_1=0\}\cap B_R \text{ and}, \\
	\Delta v &= 0 \text{ on } B_R\backslash \{x_1=0\}.
	\end{split}
	\end{align}
If $v$ solves \eqref{thinobstacleglobal} in $\R^2$ we will call $v$ a global solution to the thin obstacle problem. 

We now recall the following optimal regularity for solutions to the thin obstacle problem \cite[Theorem 9.13]{petrosyan2012regularity}
\begin{theorem}\label{optimalregularity}
	Let $v$ be a solution of \eqref{thinobstacleglobal}.
	There exists some constant $C=C(R)$ such that 
	\begin{equation*}
		\norm{v}_{C^{1,\frac{1}{2}}(\overline{B_{\frac{R}{2}}^{\pm}})}  \leq C \norm{v}_{L^2(B^{\pm}_R)}.
	\end{equation*}
\end{theorem}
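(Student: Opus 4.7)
The plan is to use the classical Almgren monotonicity approach, which identifies $3/2$ as the sharp growth exponent at free boundary points on $\{x_1=0\}$. Since $v$ is harmonic on $B_R\setminus \{x_1=0\}$, the only real difficulty is at free boundary points $x_0\in \{x_1=0\}\cap \p\{v>0\}$; away from such points standard elliptic boundary estimates apply directly. For $x_0\in \{x_1=0\}\cap B_{R/2}$ I would introduce the frequency
\[ N(r,x_0):=\frac{r\int_{B_r(x_0)}\abs{\nabla v}^2\,dx}{\int_{\p B_r(x_0)}v^2\,d\sigma}, \]
and establish that $r\mapsto N(r,x_0)$ is non-decreasing. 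The key ingredient is the distributional identity $v\Delta v=0$ coming from \eqref{thinobstacleglobal}, which makes the integration by parts $\int_{B_r(x_0)}\abs{\nabla v}^2 = \int_{\p B_r(x_0)}v\p_\nu v\,d\sigma$ hold without a bulk term. Differentiating in $r$ and applying Cauchy--Schwarz then yields $N'(r,x_0)\ge 0$.

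Next I would analyse blow-ups $v_r(x)=v(x_0+rx)/d_r^{1/2}$, where $d_r=r^{-1}\int_{\p B_r(x_0)}v^2\,d\sigma$. By monotonicity the family $\{v_r\}$ is uniformly bounded in $W^{1,2}(B_1)$ and, up to a subsequence, converges to a global thin-obstacle solution $v_0$ homogeneous of degree $\kappa:=N(0^+,x_0)$. The classification of such homogeneous solutions, carried out in polar coordinates using harmonicity off $\{x_1=0\}$ and the Signorini conditions, shows that the admissible frequencies at a genuine free boundary point form the sequence $\{3/2, 2, 5/2, \ldots\}$; in particular $\kappa \ge 3/2$. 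Combining this lower bound with the monotonicity of $r\mapsto d_r/r^{2\kappa}$-type quantities then gives the sharp growth
\[ \norm{v}_{L^\infty(B_r(x_0))} \le C\,r^{3/2}\norm{v}_{L^2(B_R)} \quad \text{for all } r\le R/4. \]

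Finally, interior gradient estimates for harmonic functions applied on balls inside $B_R^{\pm}$ at distance $\sim r$ from $\{x_1=0\}$ lift this to $\abs{\nabla v(x)}\le C\abs{x-x_0}^{1/2}\norm{v}_{L^2(B_R)}$. Combined with classical boundary Schauder estimates where $v$ satisfies a Dirichlet condition $v=0$ or a Neumann condition $\p_1 v=0$ on $\{x_1=0\}$, and a covering argument over $\overline{B_{R/2}^{\pm}}$, this yields the claimed $C^{1,1/2}$ bound on each side of the line. The main obstacle is the classification of homogeneous solutions in the blow-up step: ruling out frequencies below $3/2$ at a genuine free boundary point requires a careful spectral analysis of the spherical Laplacian with Signorini boundary conditions on $\mathbb{S}^1\cap \{x_1=0\}$, and it is precisely this gap that makes $1/2$ the optimal Hölder exponent for $\nabla v$.
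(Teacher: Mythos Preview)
The paper does not give its own proof of this theorem: it is simply recalled from the literature, with the citation \cite[Theorem 9.13]{petrosyan2012regularity}. Your outline is essentially the standard Almgren-frequency proof that appears in that reference (and earlier in the work of Athanasopoulos--Caffarelli), so in that sense you are reproducing the argument the paper relies on rather than offering an alternative.

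A small correction: the sequence of admissible homogeneities is not $\{3/2,2,5/2,\dots\}$. As recorded in Lemma~\ref{thinclassification}, the possible values of $\kappa$ are $1$, $2m$, $2m-\tfrac12$, and $2m+1$ for $m\in\N$, so after $3/2$ and $2$ the next values are $3$ and $7/2$, not $5/2$. This does not affect your argument, since all you need is that at a genuine free boundary point the frequency $\kappa=1$ is excluded (the $1$-homogeneous solutions $\pm x_1$, $-|x_1|$ do not have the origin on the thin free boundary), whence $N(0^+,x_0)\ge 3/2$ and monotonicity gives $N(r,x_0)\ge 3/2$ for all $r$. The growth estimate then follows from the identity $\tfrac{d}{dr}\log H(r)=\tfrac{2}{r}N(r)$, which is the precise ``$d_r/r^{2\kappa}$-type'' monotonicity you allude to.
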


We have the monotonicity of the Almgren frequency for solutions $v$ of \eqref{thinobstacleglobal} \cite[Theorem 9.4]{petrosyan2012regularity}.
\begin{proposition}\label{almgrenmonotonicitythin}
	If $v$ is a solution of \eqref{thinobstacleglobal} then $\phi(r,v)$ is monotone non-decreasing for $0 < r < R$. Moreover $\phi(r,v) \equiv \kappa$ for $0 < r < R$ if and only if $v$ is homogeneous of degree $\kappa$ in $B_R$.
\end{proposition}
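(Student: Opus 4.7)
The plan is to follow the classical Almgren-style computation adapted to the thin obstacle setting. Set
\begin{equation*}
D(r) := \int_{B_r} |\nabla v|^2\,dx, \qquad H(r) := \int_{\partial B_r} v^2\,d\sigma,
\end{equation*}
so that $\phi(r,v) = rD(r)/H(r)$. I will compute $D'$, $H'$ and then $(\log \phi)'$, with the key structural input coming from the complementarity conditions in \eqref{thinobstacleglobal}.

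First I would handle $H$ and the basic integration by parts. A standard change of variables gives $H'(r) = \frac{n-1}{r}H(r) + 2\int_{\partial B_r} v\, v_\nu\,d\sigma$. For $D$, since $v \Delta v = 0$ in $B_R$, one has $\int_{B_r} v\Delta v\,dx = 0$, hence by Green's identity
\begin{equation*}
D(r) = \int_{\partial B_r} v\, v_\nu\, d\sigma.
\end{equation*}
For $D'(r)$ I would use the Rellich (Pohozaev) identity on $B_r$, which yields
\begin{equation*}
r D'(r) = (n-2) D(r) + 2r \int_{\partial B_r} v_\nu^2\, d\sigma - 2\int_{B_r}(x\cdot \nabla v)\,\Delta v\, dx.
\end{equation*}

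The main obstacle is justifying that the last term vanishes, since $\Delta v$ is only a (nonpositive) measure, not an $L^2$ function. This is where the thin obstacle structure enters: by \eqref{thinobstacleglobal}, $v$ is harmonic away from $\{x_1=0\}$, so $\Delta v$ is a measure supported on the coincidence set $\Lambda := \{x_1=0\}\cap\{v=0\}$. On $\Lambda$ one has $x_1=0$, and since $v\ge 0$ on $\{x_1=0\}$ attains its minimum along $\Lambda$, the tangential derivative $\partial_2 v$ vanishes at density points of $\Lambda$. Hence $x\cdot\nabla v = x_1\partial_1 v + x_2\partial_2 v = 0$ on $\Lambda$. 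To make this rigorous (and avoid trace issues) I would approximate $\Delta v$ by the smooth data arising from $v_\eps$, say via a standard penalization of the thin obstacle problem, where the penalized Laplacian is $-\beta_\eps(v)\chi_{\{x_1=0\}}$ times a mollifier. Along such approximations $\int(x\cdot\nabla v_\eps)\Delta v_\eps \to 0$ because $v_\eps\to 0$ uniformly on the support of $\Delta v_\eps$ (up to a small set) and $x\cdot\nabla v_\eps$ is bounded in view of Theorem \ref{optimalregularity}. This yields
\begin{equation*}
D'(r) = \tfrac{n-2}{r} D(r) + 2\int_{\partial B_r} v_\nu^2\, d\sigma.
\end{equation*}

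Combining these ingredients I would compute
\begin{equation*}
\frac{\phi'(r,v)}{\phi(r,v)} = \frac{1}{r} + \frac{D'(r)}{D(r)} - \frac{H'(r)}{H(r)} = 2\left(\frac{\int_{\partial B_r} v_\nu^2\, d\sigma}{\int_{\partial B_r} v\, v_\nu\, d\sigma} - \frac{\int_{\partial B_r} v\, v_\nu\, d\sigma}{\int_{\partial B_r} v^2\, d\sigma}\right),
\end{equation*}
which is nonnegative by Cauchy--Schwarz applied to the pair $(v, v_\nu)$ on $\partial B_r$. This establishes monotonicity.

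For the rigidity statement, $\phi(\cdot,v) \equiv \kappa$ forces equality in Cauchy--Schwarz on every sphere $\partial B_r$, $0<r<R$, so $v_\nu = c(r)\, v$ on $\partial B_r$ for a scalar $c(r)$. Testing $\phi(r,v) = \kappa$ gives $c(r) = \kappa/r$. Writing $v$ in polar form and integrating the ODE $\partial_r v = (\kappa/r) v$ along rays, I conclude that $v$ is homogeneous of degree $\kappa$ in $B_R$; conversely, a $\kappa$-homogeneous $v$ trivially satisfies $\phi \equiv \kappa$ by scaling.
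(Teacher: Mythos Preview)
The paper does not supply its own proof of this proposition; it is quoted as a preliminary fact with a citation to \cite[Theorem 9.4]{petrosyan2012regularity}. Your argument is precisely the classical Almgren computation carried out in that reference, so there is nothing to compare against and no discrepancy to report.

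One comment on the step you flag as the ``main obstacle'': the penalization route is heavier than necessary here, and your justification for why the approximating integrals vanish is slightly garbled (it is $x\cdot\nabla v$, not $v$ itself, that must vanish on the support of $\Delta v$). With the optimal regularity of Theorem \ref{optimalregularity} already available, $\nabla v$ has continuous one-sided traces on $\{x_1=0\}$, so $\Delta v$ equals the jump $[\partial_1 v]\,\mathcal{H}^1\mres\{x_1=0\}$ and in particular carries no atoms. On its support one has $x_1=0$, and $\partial_2 v=0$ $\mathcal{H}^1$-a.e.\ since $v$ vanishes there and is nonnegative along the line; hence $x\cdot\nabla v=0$ $\Delta v$-a.e.\ and the Rellich term drops out directly. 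After that, your Cauchy--Schwarz argument and the rigidity via the radial ODE $v_\nu=(\kappa/r)v$ are exactly the standard ones.
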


The homogeneous global solutions to \eqref{thinobstacleglobal} are classified and we summarise these results from \cite[Section 9.4.1]{petrosyan2012regularity} in the following Lemma. 
\begin{lemma}\label{thinclassification}
	Let $v$ be a $\kappa$-homogeneous global solution of \eqref{thinobstacleglobal}. Then the following hold.
	\begin{enumerate}[i.]
		\item The possible values of $\kappa$ are $1, 2m, 2m-\frac{1}{2}$ or  $2m +1, m \in \N$;
		\item if $\kappa =2m$ then $v$ must be a $2m$-homogeneous harmonic polynomial;
		\item if $\kappa = 2m-\frac{1}{2}$ then $v = Re(x_1 + \abs{x_2}i)^{2m-\frac12}$ up to a constant,
		\item and if $\kappa =1$ or $\kappa = 2m + 1$ then $v = Im(x_1 + \abs{x_2}i)^{2m+1}$ up to a constant.
	\end{enumerate}
\end{lemma}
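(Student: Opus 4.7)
The plan is to analyze the eigenvalue problem for the angular profile of $v$. In polar coordinates, a $\kappa$-homogeneous solution has the form $v(r,\theta)=r^{\kappa}\phi(\theta)$, and the harmonicity of $v$ away from the obstacle line reduces to the ODE $\phi''(\theta)+\kappa^2\phi(\theta)=0$ on each of the two arcs of $S^1$ obtained by removing the two points where the obstacle meets the unit circle. The complementarity conditions from the thin obstacle problem supply matching/boundary data at these two points: at each point either $\phi$ vanishes (coincidence) or the jump in the tangential derivative vanishes (non-coincidence), with the signs determined by $\phi\geq 0$ and $\Delta v\leq 0$.

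The first step is to decompose $v=v^e+v^o$ into parts even and odd in the direction normal to the obstacle. The odd piece vanishes identically on the obstacle; moreover, oddness forces its normal derivative to be continuous across the obstacle, so the distributional Laplacian of $v^o$ carries no singular mass there. Hence $v^o$ is harmonic on all of $\R^2$, and the classical classification of homogeneous harmonic functions in the plane forces $\kappa\in\N$ with $v^o$ a linear combination of $\mathrm{Re}(z^\kappa)$ and $\mathrm{Im}(z^\kappa)$; parity in the normal direction selects the appropriate summand. This accounts for the odd-integer degree possibilities stated in (i) and (iv).

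For the even part $v^e$, which is itself a $\kappa$-homogeneous thin-obstacle solution, I would restrict to one half-plane and study the Signorini problem there. By homogeneity the coincidence set on the obstacle is a union of rays through the origin, giving a three-way case split. If neither ray is in coincidence, the Signorini conditions degenerate to Neumann at both endpoints of $[-\pi/2,\pi/2]$, $v^e$ extends by reflection to a harmonic function, and the eigenvalues $\kappa=2m$ yield the even-degree harmonic polynomials of (ii). If exactly one ray is in the coincidence set, a mixed Dirichlet--Neumann problem arises whose eigenvalues are $\kappa=(2n+1)/2$, and the sign condition filters these to $\kappa=2m-\tfrac{1}{2}$ with $\phi$ matching the real part of the explicit formula in (iii). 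If both rays are in the coincidence set, Dirichlet at both endpoints gives integer $\kappa$, and the compatibility of the sign condition with evenness in the normal variable collapses this case into ones already captured by the harmonic part.

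The main difficulty is the mixed-boundary case: correctly identifying the half-integer eigenvalues of the Dirichlet--Neumann ODE, recognising the corresponding eigenfunction as the real part of the appropriate conformal map $(x_1+\abs{x_2}i)^{2m-1/2}$, and verifying the Signorini sign condition on the coincident ray to eliminate extraneous eigenvalues such as $\kappa = 1/2$.
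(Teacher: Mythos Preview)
The paper does not prove this lemma itself but quotes it from \cite[Section~9.4.1]{petrosyan2012regularity}; your outline (even/odd split in the normal variable, the odd part being globally harmonic, and a Dirichlet/Neumann/mixed eigenvalue analysis on a half-arc for the even part) is precisely the argument given in that reference, so your approach is both correct and the standard one.

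One bookkeeping slip: the explicit solutions listed under (iv) are \emph{even} in the normal variable (for $\kappa=1$ the formula reduces to $\pm$ the absolute value of the normal coordinate), so they come from your Dirichlet--Dirichlet case for $v^e$, not from $v^o$. The odd part $v^o$ contributes the harmonic polynomials that vanish on the obstacle; these are not singled out in the lemma because the half-space Signorini formulation in the cited reference works only with the even extension. This affects only the attribution of which case produces which family, not the validity of your method.
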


\section{Modified ACF functional}\label{section:modifiedACF}
In this section we establish a modified ACF functional in $2$-dimensions. 

\begin{theorem}\label{prop:modACF}
	Suppose for $i=1,2,3$, $v_i \in C^{1,1}(B_2)$ satisfy $v_i\Delta v_i \geq 0$ and have supports in $\Omega_i$ pairwise disjoint. Moreover suppose that $v_i =  0$ on $\p (\Omega_i \cap B_1)$. Then for each $\beta\leq 9$, the quantity
	\begin{equation}
		\nonumber
		 \Phi(r) = \frac{1}{r^{\beta}}\prod_{i=1}^3\int_{B_r} \abs{\nabla v_i}^2 dx
	\end{equation}
	is monotone non-decreasing in $r$,  $0<r<1$.
\end{theorem}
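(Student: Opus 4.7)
The plan is to carry out the classical Alt--Caffarelli--Friedman logarithmic derivative computation adapted to three supports, reducing monotonicity to a three-arc Friedland--Hayman type inequality on the unit circle. Set $\Psi_i(r) := \int_{B_r}\abs{\nabla v_i}^2\,dx$, so that
\begin{equation*}
	(\log \Phi)'(r) \;=\; -\frac{\beta}{r} + \sum_{i=1}^{3}\frac{\Psi_i'(r)}{\Psi_i(r)}.
\end{equation*}
The goal therefore reduces to establishing the bound $\sum_i \Psi_i'(r)/\Psi_i(r) \geq 9/r$.

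For each $i$ separately I would invoke the standard spherical eigenvalue estimate. Let $\theta_i(r)$ denote the angular measure of $\Omega_i\cap \partial B_r$, and note that $v_i$ vanishes at every endpoint of every arc component of $\Omega_i\cap \partial B_r$ since these endpoints lie in $\partial\Omega_i\cap B_1$. Coarea gives $\Psi_i'(r) = \int_{\partial B_r}(\partial_\nu v_i)^2\,dS + \int_{\partial B_r}\abs{\nabla_\tau v_i}^2\,dS$, and the one-dimensional Poincar\'e inequality applied on each arc (and combined with the worst-case bound that each component length is at most $\theta_i(r)$) yields
\begin{equation*}
	\int_{\partial B_r}\abs{\nabla_\tau v_i}^2\,dS \;\geq\; \frac{\pi^2}{r^2\theta_i(r)^2}\int_{\partial B_r} v_i^2\,dS.
\end{equation*}
Integration by parts on $B_r$, together with $v_i\Delta v_i\geq 0$ and the vanishing of $v_i$ on $\partial\Omega_i\cap B_1$, gives $\Psi_i(r)\leq \int_{\partial B_r} v_i\,\partial_\nu v_i\,dS$. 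Cauchy--Schwarz on this last integral, followed by AM--GM combining the normal and tangential pieces of $\abs{\nabla v_i}^2$, then delivers the one-function estimate
\begin{equation*}
	\frac{\Psi_i'(r)}{\Psi_i(r)} \;\geq\; \frac{2\pi}{r\,\theta_i(r)}.
\end{equation*}

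To conclude, the pairwise disjointness of the supports forces $\theta_1(r)+\theta_2(r)+\theta_3(r)\leq 2\pi$, and Cauchy--Schwarz in the form $\bigl(\sum_i\theta_i\bigr)\bigl(\sum_i 1/\theta_i\bigr)\geq 9$ produces $\sum_i 2\pi/(r\theta_i(r)) \geq 9/r$, which is exactly what is needed for all $\beta\leq 9$. The main obstacle I expect is the Poincar\'e step combined with the possibility that $\Omega_i\cap\partial B_r$ has many components (and that there may be bad radii where $\theta_i(r)$ jumps); the crude control by the total angular measure is precisely what lets the three-term Cauchy--Schwarz deliver the sharp exponent $9$ rather than a weaker bound. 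Secondary technicalities -- justifying the integration by parts despite potential irregularity of $\partial\Omega_i$ under only $v_i\in C^{1,1}$ and $v_i = 0$ on $\partial(\Omega_i\cap B_1)$ -- are standard in the ACF literature and can be dispatched by approximating with the sets $\{\abs{v_i}>\varepsilon\}$ and letting $\varepsilon\downarrow 0$.
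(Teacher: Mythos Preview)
Your proposal is correct and follows essentially the same route as the paper: compute the logarithmic derivative of $\Phi$, use $v_i\Delta v_i\geq 0$ and integration by parts to bound $\int_{B_r}|\nabla v_i|^2$ by the boundary term $\int_{\partial B_r} v_i\partial_\nu v_i$, split $|\nabla v_i|^2$ into radial and tangential parts and combine via AM--GM/Cauchy--Schwarz, and then invoke the first Dirichlet eigenvalue on circular arcs together with the constraint $\sum_i\theta_i\leq 2\pi$ to obtain the sharp constant $9$. The only cosmetic differences are that the paper rescales to $r=1$ and appeals to P\'olya--Szeg\H{o} symmetrization plus a calculus minimization of $\tfrac{1}{2\alpha}+\tfrac{1}{2\gamma}+\tfrac{1}{2(1-\alpha-\gamma)}$, whereas you bound each component's eigenvalue by that of the total arc and finish with AM--HM; your version is marginally more direct.
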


\begin{proof}
	Denoting $\frac{d}{dr}\Phi(r)$ as  $\Phi'(r)$, we have
	\begin{align*}
		\Phi'(r) = -\beta r^{-1}\Phi(r) + r^{\beta}\sum_{i=1}^{3} \int_{\p B_r} \abs{\nabla v_i}^2 d\sigma \prod_{j\neq i} \int_{B_r} \abs{\nabla v_j}^2.
	\end{align*}
	Dividing through by $r^{-1}\Phi(r)$ we have 
	\begin{align*}
		r \frac{\Phi'(r)}{\Phi(r)} = -\beta + r\sum_{i=1}^{3} \frac{\int_{\p B_r} \abs{\nabla v_i}^2 d\sigma }{\int_{B_r} \abs{\nabla v_i}^2},
	\end{align*}
	so that by rescaling, we just need to show that the quantity 
	\begin{equation}\nonumber
		P(\beta)  := -\beta + \sum_{i=1}^{3} \frac{\int_{\p B_1} \abs{\nabla v_i}^2 d\sigma }{\int_{B_1} \abs{\nabla v_i}^2}
	\end{equation}
	is non-negative. Since $v_i\Delta v_i \geq 0$, integrating by parts yields
	\begin{align*}
		\int_{B_1} \abs{\nabla v_i}^2 = - \int_{B_1} v_i \Delta v_i + \int_{\p B_1} v_i (v_i)_{\rho} d\sigma \leq \int_{\p B_1} v_i (v_i)_{\rho} d\sigma,
	\end{align*}
	where $(v_i)_{\rho}$ denotes the exterior radial derivative along $\p B_1$. Applying then H\"older's inequality we have
	\begin{align*}
		\int_{\p B_1} v_i (v_i)_{\rho} d\sigma \leq \left(\int_{\p B_1} v_i^2 d\sigma\right)^{1/2}\left(\int_{\p B_1}(v_i)_{\rho}^2 d\sigma\right)^{1/2}.
	\end{align*}
	On the other hand, Young's inequality yields
	\begin{equation*}
		\int_{\p B_1} \abs{\nabla v_i}^2 \geq 2 \left(\int_{\p B_1}(v_i)_{\rho}^2 d\sigma\right)^{1/2} \left(\int_{\p B_1}(v_i)_{\theta}^2 d\sigma\right)^{1/2},
	\end{equation*}
	where $(v_i)_{\theta}$ is the tangential derivative of $v_i$ along $\p B_1$. We can therefore estimate $P(\beta)$ from below and obtain 
	\begin{equation}\label{ACFproof:quantity2}
		P(\beta) \geq  2\left(\sum_{i=1}^{3} \frac{\left(\int_{\p B_1} \abs{(v_i)_{\theta}}^2d\sigma \right)^{\frac12} }{\left(\int_{\p B_1} \abs{v_i}^2d\sigma\right)^{\frac12}} - \frac{\beta}{2}\right).
	\end{equation}
	We must now minimise the sum appearing in the right-hand of \eqref{ACFproof:quantity2}. To this end we let $\Gamma_i = \p B_1 \cap \Omega_i$ and we recall some remarks from \cite[Proof of Theorem 12.1]{caffarelli2005geometric}. Observe that
	\begin{equation*}
		\frac{\int_{\p B_1} \abs{(v_i)_{\theta}}^2 d\sigma}{\int_{\p B_1} \abs{v_i}^2 d\sigma} \geq \inf_{v\in H^1_0(\Gamma_i)} \frac{\int_{\p B_1} \abs{(v)_{\theta}}^2d\sigma }{\int_{\p B_1} \abs{v}^2d\sigma}
	\end{equation*}
	where the right-hand side is the first eigenvalue of the domain $\Gamma_i$. Among all possible domains with measure $\abs{\Gamma_i}$, this infimum is attained when the domain is a connected arc\footnote{Indeed, if $\Gamma_i$ was not connected then symmetrising $v$ cannot increase the quotient by the P\'oly-Szeg\H{o} inequality.}. Since $\Gamma_i \cap \Gamma_j = \emptyset$ if $i\neq j$, and since larger arcs have smaller eigenvalues, we have that the sum in \eqref{ACFproof:quantity2} will be minimised if the $\Gamma_i$ are three disjoints arcs whose union is the circle. 
	
	Supposing these three arcs have lengths $2\pi\alpha$, $2\pi\gamma$ and $2\pi(1-\alpha-\gamma)$ for some $\alpha,\gamma \in (0,1)$ satisfying $0<\alpha + \gamma <1$, we find that the three principle eigenfunctions are $\sin(\frac{\theta}{2\alpha})$, $\sin(\frac{\theta}{2\gamma})$ and $\sin(\frac{\theta}{2(1-\alpha-\gamma)})$. We therefore have that
	\begin{align*}
		 \sum_{i=1}^{3} \frac{\left(\int_{\p B_1} \abs{(v_i)_{\theta}}^2 d\sigma\right)^{\frac12} }{\left(\int_{\p B_1} \abs{v_i}^2d\sigma\right)^{\frac12}}  \geq \frac{1}{2\alpha} + \frac{1}{2\gamma} + \frac{1}{2(1-\alpha-\gamma)} \geq \frac{9}{2}
	\end{align*}
	where in the last step we minimised $\frac{1}{2\alpha} + \frac{1}{2\gamma} + \frac{1}{2(1-\alpha-\gamma)}$ over the domain $\{0< \alpha < 1\} \cap \{0< \gamma < 1\} \cap \{0< \alpha + \gamma < 1\}$. This completes the proof as we then have
	\begin{equation*}
		 P(\beta) \geq  9 - \beta,
	\end{equation*}
	which is non-negative if $\beta \leq 9$. 
\end{proof}

\section{Blow-down analysis}\label{section:blowdownanalysis}

We begin by defining the family of rescalings
$$\tilde{w}_r = \frac{w_r}{\norm{w_r}_{L^2(\p B_1)}}.$$
Thanks to the frequency estimate in Proposition \ref{frequencybound} there exists a universal constant $C$ such that
\begin{equation}
	\norm{\tilde{w}_r}_{W^{1,2}(B_1)} \leq C \text{\hspace{2mm} for all } r>0.
\end{equation}
The following Proposition shows that this can be improved to uniform $W^{1,2}(B_R)$-bounds for each $R>1$.

\begin{proposition}\label{uniformW12}
	For each $R >1$ we have 
	\begin{equation*}
		\norm{\tilde{w}_r}_{W^{1,2}(B_R)} \leq C(R)
	\end{equation*}
	 for all $r > 1$.
\end{proposition}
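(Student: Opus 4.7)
The plan is to reduce the $W^{1,2}(B_R)$ estimate for $\tilde w_r$ to a polynomial multiplicative bound of the form $H(rR)/H(r)\leq C(R)$, where I write $w=u-p$, $H(s):=\int_{\partial B_s} w^2\,d\sigma$ and $D(s):=\int_{B_s}|\nabla w|^2$. A direct change of variables yields
\[
\int_{B_R}\tilde w_r^{\,2}\,dx=\frac{1}{r\,H(r)}\int_0^{rR}H(s)\,ds,\qquad \int_{B_R}|\nabla \tilde w_r|^2\,dx=\frac{r\,D(rR)}{H(r)}.
\]
Since Remark~\ref{signedremark} gives $w\Delta w\geq 0$, the function $w^2$ is subharmonic, hence $s\mapsto H(s)/s$ is nondecreasing, which bounds $\int_0^{rR}H(s)\,ds\leq \tfrac12 rR\,H(rR)$. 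The frequency estimate $\phi(rR,w)\leq 2$ of \eqref{frequencybound} gives $D(rR)\leq 2H(rR)/(rR)$. Both integrals are thereby controlled by a constant multiple of $H(rR)/H(r)$, and the proposition is reduced to a polynomial doubling for $H$.

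The main step is therefore to establish $H(rR)/H(r)\leq C(R)$. I would differentiate to obtain
\[
\frac{d}{ds}\log H(s)=\frac{1}{s}+\frac{2E(s)}{H(s)},\qquad E(s):=\int_{\partial B_s} w\,w_\nu\,d\sigma=D(s)+W(s),
\]
with $W(s):=\int_{B_s} w\,\Delta w=\int_{B_s\cap\mathcal{C}}p\geq 0$ (using $\Delta w=-\chi_{\mathcal{C}}$ and $w=-p$ on $\mathcal{C}$). The frequency bound already gives $D(s)/H(s)\leq 2/s$, so the only term still requiring control is $W(s)/H(s)$. The key observation is that on the interior of $\mathcal{C}$ one has $u\equiv 0$, hence $\nabla w=-\nabla p$ and
\[
|\nabla w|^2=|\nabla p|^2=x_1^{\,2}=2p.
\]
Because $\mathcal{C}$ is convex its boundary has Lebesgue measure zero, so
\[
W(s)=\tfrac12\int_{B_s\cap\mathcal{C}^\circ}|\nabla w|^2\,dx\leq \tfrac12 D(s).
\]
Combining the two bounds gives $\frac{d}{ds}\log H(s)\leq 1/s+4/s+2/s=7/s$, and integrating from $r$ to $rR$ yields $H(rR)/H(r)\leq R^7$. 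Substituting back, $\|\tilde w_r\|_{W^{1,2}(B_R)}\leq CR^4\leq C(R)$.

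The hard part will be finding a usable upper bound for the auxiliary term $W(s)$, which is the only ingredient beyond the Almgren frequency estimate; all other pieces (subharmonicity of $w^2$, frequency bound, the rescaling identities) are essentially built-in. The identity $|\nabla w|^2=2p$ on $\mathcal{C}^\circ$, which is a direct consequence of $u\equiv 0$ there and the explicit form $p=\tfrac12 x_1^2$, is precisely what ties $W(s)$ back to $D(s)$ and closes the differential inequality.
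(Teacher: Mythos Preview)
Your argument is correct and takes a genuinely different route from the paper. The paper proceeds by contradiction: assuming $\int_{\partial B_R} w_r^2/\int_{\partial B_1} w_r^2\to\infty$ along a sequence, it renormalises on $\partial B_R$, uses the frequency estimate together with the H\"older bound of Lemma~\ref{holderboundlemma} to extract a limit $\bar w$, and then argues (via harmonicity off a slit and analyticity) that $\bar w$ must vanish identically on $B_R$, contradicting $\|\bar w\|_{L^2(\partial B_R)}=1$. Your proof is direct and quantitative: the identity $|\nabla w|^2=|\nabla p|^2=x_1^2=2p$ on $\mathcal C^\circ$, which exploits the specific form $p=\tfrac12 x_1^2$, yields $W(s)\le \tfrac12 D(s)$ and hence the clean differential inequality $(\log H)'\le 7/s$. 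This avoids all compactness, avoids Lemma~\ref{holderboundlemma}, and produces the explicit bound $H(rR)/H(r)\le R^7$ (and thus $\|\tilde w_r\|_{W^{1,2}(B_R)}\le CR^4$). The trade-off is that your bound $W(s)\le\tfrac12 D(s)$ hinges on the algebraic coincidence $|\nabla p|^2=2p$, which holds precisely because the matrix of $p$ is a projection; the paper's compactness argument is softer in this respect, though in the present two-dimensional setting that extra flexibility is not needed.
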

\begin{proof}
	We follow \cite[Proposition 4.2]{eberleduke2022}. First observe that from the frequency estimate \eqref{frequencybound} for $w$ at radius $Rr$ for every $r>0$, we obtain
	\begin{equation*}
		\int_{B_R} \abs{\nabla w_r}^2 \leq \frac{2}{R} \int_{\p B_R} w_r^2 d\sigma.
	\end{equation*}
	Consequently for every $R>1$ and every $r >0$ we have that 
	\begin{equation*}
		\norm{\tilde{w}_r}_{W^{1,2}(B_R)} \leq C \frac{\int_{\p B_R} w_r^2d\sigma}{\int_{\p B_1}w_r^2d\sigma}.
	\end{equation*}
	Now suppose that for some $R>1$ there exists a sequence $(r_k)_{k\in\N}$ such that $r_k \to \infty$ and
	\begin{equation}\label{uniformW12proof:assumption}
		\frac{\int_{\p B_R} w_{r_k}^2d\sigma}{\int_{\p B_1}w_{r_k}^2d\sigma} \to + \infty \hspace{2mm} \text{as } k\to\infty
	\end{equation}
	 Defining the sequence
	\begin{equation*}
		\bar{w}_{r_k}= \frac{w_{r_k}}{\norm{w_{r_k}}_{L^2(\p B_R)}},
	\end{equation*}
	we have that
	\begin{equation*}
		\int_{\p B_R} \abs{\bar{w}_{r_k}}^2d\sigma =1,
	\end{equation*}
	so that by the frequency estimate \eqref{frequencybound} we obtain
	\begin{equation}\label{uniformW12proof:gradbound}
		\int_{B_R} \abs{\nabla \bar{w}_{r_k}}^2 \leq 2 R^{-1}.
	\end{equation}
	Along with Lemma \ref{holderboundlemma}, \eqref{uniformW12proof:gradbound} implies that there exists some $\bar{w} \in W^{1,2}(B_R)$ such that, up to taking a subsequence which we do not relabel, we have
	\begin{align*}
		&\bar{w}_{r_k} \to \bar{w} \text{\hspace{2mm} weakly in } W^{1,2}(B_R),\\
		&\bar{w}_{r_k} \to \bar{w} \text{\hspace{2mm} in } L^{2}(\p B_R),\\
		&\bar{w}_{r_k} \to \bar{w} \text{\hspace{2mm}  in } C^{0}_{\loc}(B_R).
	\end{align*}
	Moreover, since $\Delta w_{r_k} = -\chi_{\{u_{r_k}=0\}} < 0$ we have for each $\alpha \in (0,1)$ and each $\eta \in C^{\infty}_c(B_R)$ satisfying $\eta \geq 0$ and $\eta \equiv 1$ on $B_{\alpha R}$, that 
	\begin{align*}
		\int_{B_{\alpha R}} \abs{\Delta \bar{w}_{r_k}} \leq   \int_{B_{R}} \eta \abs{\Delta \bar{w}_{r_k}} = - \int_{B_R} \eta \Delta \bar{w}_{r_k} \leq C(\alpha) \norm{\nabla \bar{w}_{r_k}}_{L^2(B_R)}.
	\end{align*}
	Hence $\Delta \bar{w}_{r_k}$ converges locally to $\Delta \bar{w}$ as measures and in particular, $\Delta \bar{w}$ is a non-positive measure supported on $\{t e^2: t\geq 0\} \cap B_{\theta R}$ for each $\theta \in (0,1)$.\\
	We will now show that $\bar{w} \equiv 0$ on $B_1$. Firstly, the locally uniform convergence implies that $\bar{w} =0 $ on  $\{t e^2: 0 \leq t \leq 1\}$. Moreover, by \eqref{uniformW12proof:assumption}, we also have that 
	\begin{equation*}
		\int_{\p B_1} \bar{w}_{r_k}^2d\sigma = \frac{\int_{\p B_1} w_{r_k}^2d\sigma}{\int_{\p B_R} w_{r_k}^2d\sigma} \to 0,
	\end{equation*}
	and so $\bar{w} \equiv 0$ on $\p B_1$. Since $\bar{w}$ is harmonic in the set $\Omega=B_1 \backslash \{t e^2: 0 \leq t \leq 1\}$ while being zero on $\p \Omega$, we must have that $\bar{w} \equiv 0$ in $B_1$.
	
	Now, since $\bar{w}$ is analytic in $B_R \backslash  \{t e^2: 0 \leq t \leq R\}$ and zero on  $B_1$, we conclude that $\bar{w} \equiv 0$ in $B_R$. This contradicts the fact that 
	$$\int_{\p B_R} \bar{w}^2 d\sigma= \lim_{k\to\infty} \int_{\p B_R}\bar{w}_{r_k}^2d\sigma=1.$$
\end{proof}

\begin{proposition}\label{blowdownanalysis}
	For each sequence $(r_k)_{k\in \N}$ such that $r_k \to \infty$ as $k \to \infty$, there exists a subsequence, which we do not relabel, such that
	\begin{equation*}
		\tilde{w}_{r_k} \to v_0 \text{  in } W^{1,2}_{\loc}\cap C^0_{\loc}(\R^2),
	\end{equation*}
	where $v_0$ is a global solution to the thin obstacle problem \eqref{thinobstacleglobal} (which may depend on the sequence) satisfying 
	\begin{equation}
		\label{norm1blowdown}
		\norm{v_0}_{L^2(\p B_1)} = 1,
	\end{equation}
	\begin{equation}\label{supportmeasure}
		\supp(\Delta v_0) \subset \{x_1 = 0\} \cap \{x_2 \geq 0\},
	\end{equation}
	\begin{equation}\label{nonpositivemeasure}
	\Delta v_0\leq 0,
	\end{equation}
	\begin{equation}
		\label{monotonicty:blowdown1}
		\p_2 v_0 \leq 0,
	\end{equation}
	and
	\begin{equation}
		\label{nonpositive:blowdown1}
		v_0 \leq 0 \hspace{2mm} \text{ in } \{x_2 \geq 0\}.
	\end{equation}
	Moreover,
		\begin{equation}
		\label{frequency:blowdown:bound}
		\phi(R,v_0) \leq 2
	\end{equation}
	for every $R>0$. 
\end{proposition}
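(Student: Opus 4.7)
The strategy is to extract a subsequence via compactness and verify each listed property by passing to the limit. Proposition~\ref{uniformW12} gives uniform $W^{1,2}(B_R)$ bounds on the normalized rescalings $\tilde w_{r_k}$ for each $R>0$, and Lemma~\ref{holderboundlemma} yields uniform $C^{0,\beta}$-bounds on compacta. A diagonal extraction combined with Banach--Alaoglu and Arzelà--Ascoli gives a (non-relabeled) subsequence converging weakly in $W^{1,2}_{\loc}$ and uniformly on compacta to some $v_0$; the uniform convergence on $\partial B_1$ yields \eqref{norm1blowdown}.

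For the Laplacian, $\Delta\tilde w_{r_k}\leq 0$ is supported in $r_k^{-1}\mathcal{C}$, and this set converges in Hausdorff distance on compacta to $\{x_1=0\}\cap\{x_2\geq 0\}$ by the quadratic blow-down $u_{r_k}/r_k^2\to p$ from Lemma~\ref{blowdownlimits} together with the $x_2$-monotonicity forcing $\mathcal{C}\subset\{x_2\geq 0\}$. A cutoff integration-by-parts (testing $-\Delta\tilde w_{r_k}\geq 0$ against a nonnegative cutoff and invoking the $W^{1,2}_{\loc}$ bound) shows that $|\Delta\tilde w_{r_k}|$ is uniformly bounded as a Radon measure on compacta, so weak-$*$ extraction yields \eqref{supportmeasure} and \eqref{nonpositivemeasure}. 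The monotonicity \eqref{monotonicty:blowdown1} passes to the weak limit directly from $\p_2 u\leq 0$ and $\p_2 p=0$, while the frequency bound \eqref{frequency:blowdown:bound} follows from the scale invariance $\phi(R,\tilde w_{r_k})=\phi(Rr_k,w)\leq 2$ combined with the weak lower semicontinuity of the Dirichlet integral and the strong $L^2(\partial B_R)$-convergence of traces.

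For the sign condition \eqref{nonpositive:blowdown1}: $\p_2 u\leq 0$, $u\geq 0$, and $u(0,0)=0$ force the positive $x_2$-axis into $\mathcal{C}$, so $w=-p\equiv 0$ on it and hence $v_0\equiv 0$ on $\{x_1=0,\,x_2\geq 0\}$. Upgrading this to $v_0\leq 0$ on all of $\{x_2\geq 0\}$ is the main technical obstacle: the plan is to use the subharmonicity of $(u-p)_+$ from Lemma~\ref{differenceenergyinequality} together with a maximum-principle argument on large balls intersected with $\{x_2\geq 0\}$, controlling the trace of $\tilde w_{r_k,+}$ on $\{x_2=0\}$ via the monotonicity $\p_2 w\leq 0$ and the sub-quadratic growth of $w$ implied by the frequency bound, to conclude $\tilde w_{r_k,+}\to 0$ locally uniformly on $\{x_2\geq 0\}$. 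Once in hand, the thin obstacle conditions \eqref{thinobstacleglobal} follow: $v_0\geq 0$ on $\{x_1=0,\,x_2<0\}$ comes from \eqref{monotonicty:blowdown1} and $v_0(0,0)=0$, $\Delta v_0=0$ off $\{x_1=0\}$ from \eqref{supportmeasure}, and $v_0\Delta v_0=0$ since $\supp\Delta v_0\subset\{v_0=0\}$.
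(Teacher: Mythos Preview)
Your overall architecture is right, but there are two genuine gaps.

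\medskip
\textbf{Strong $W^{1,2}_{\loc}$ convergence is missing.} The statement asserts convergence \emph{in} $W^{1,2}_{\loc}$, not merely weakly, and you never upgrade. The paper devotes a separate step to this: using that $w\Delta w\ge 0$ (Remark~\ref{signedremark}) and testing against a smooth cutoff $\eta_j$ approximating $\chi_{B_{R/2}}$, one shows
\[
\limsup_{k\to\infty}\int_{B_{R/2}}|\nabla \tilde w_{r_k}|^2 \le \int_{B_{R/2}}|\nabla v_0|^2,
\]
which combined with weak lower semicontinuity gives norm convergence and hence strong $W^{1,2}(B_{R/2})$ convergence. Your outline does not mention this ingredient, and without the sign condition $w\Delta w\ge 0$ there is no reason for the Dirichlet energies to converge.

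\medskip
\textbf{Your plan for \eqref{nonpositive:blowdown1} does not work as stated.} You propose to bound $\tilde w_{r_k,+}$ on $\{x_2\ge 0\}$ via the maximum principle, controlling the trace on $\{x_2=0\}$ through $\partial_2 w\le 0$. But $\partial_2 w\le 0$ gives $w(x_1,0)\ge w(x_1,x_2)$ for $x_2>0$, i.e.\ a \emph{lower} bound on $w$ along $\{x_2=0\}$, which is the wrong direction for bounding $w_+$ from above there. Also, the frequency bound $\phi\le 2$ yields at most quadratic growth, not sub-quadratic, so that ingredient is misstated. The paper avoids all of this by using the other half of Remark~\ref{signedremark}: since $\partial_{11}w\le 0$, the map $x_1\mapsto w(x_1,x_2)$ is concave for each fixed $x_2$. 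For any $x_2\ge 0$ the point $(0,x_2)$ lies in $\mathcal{C}$ (as you yourself argue), so $w(0,x_2)=0$ and $\partial_1 w(0,x_2)=0$; concavity then gives $w(\cdot,x_2)\le 0$ outright, \emph{before} taking any limit. This one-line argument replaces your entire ``main technical obstacle.''
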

\begin{proof}
	\textit{Step 1: Convergence to a global solution of the thin obstacle problem:}\\
	Fixing $R>1$ we have by Proposition \ref{uniformW12} and Lemma \ref{holderboundlemma}, up to a subsequence which we do not relabel, that 
	\begin{align*}
		&\tilde{w}_{r_k} \to v_0 \text{\hspace{2mm} weakly in } W^{1,2}(B_R),\\
		&\tilde{w}_{r_k} \to v_0 \text{\hspace{2mm} in } L^{2}(\p B_R),\\
		&\tilde{w}_{r_k} \to v_0 \text{\hspace{2mm}  in } C^{0}_{\loc}(B_R),
	\end{align*}
	for some $v_0 \in W^{1,2}(B_R)$.  

	As in the proof of Proposition \ref{uniformW12} we have that $\Delta v_0$ is a non-positive measure supported on $\{te^2: 0 \leq t \leq \frac R 2\}$ while $v_0 = 0$ on $\{te^2: 0 \leq t \leq \frac R 2\}$. Moreover, $v_0\geq 0 $ on $\{x_1 =0\} \cap B_{\frac{R}{2}}$.
	We therefore conclude that 
	\begin{align}\nonumber
		\begin{split}
		v_0\Delta v_0 = 0 &\text{ and } \Delta v_0 \leq 0 \text{ in } B_{\frac R2} \\
		v_0 \geq 0 &\text{ on } \{x_1 =0\} \cap B_{\frac{R}{2}} \\
		\Delta v_0 = 0 &\text{ on } B_{R/2}\backslash \left(\{x_1 =0 \} \cap \{x_2\geq 0\}\right),
		\end{split}
	\end{align}
	that is, $v_0$ is a solution to the thin-obstacle problem in $B_{\frac R2}$ with thin obstacle $\{x_1 =0\} \cap B_{\frac{R}{2}}$.\\
	This establishes weak $W^{1,2}_{\loc}(\R^2)$ and $C^0_{\loc}(\R^2)$ convergence to some $v_0$ that is a global solution of \eqref{thinobstacleglobal}. Furthermore, \eqref{norm1blowdown}, \eqref{supportmeasure} and \eqref{nonpositivemeasure} are immediate, while \eqref{frequency:blowdown:bound} follows from Proposition \ref{frequencybound}. Moreover, since $\p_2 w = \p_2 u \leq 0$ we immediately obtain \eqref{monotonicty:blowdown1} by the weak convergence in $W^{1,2}_{\loc}$. Finally we recall that $\p_{11}w \leq 0$ by Remark \ref{signedremark}, and hence $w\leq 0$ in $\{x_2 \geq 0\}$ (since $w=-\frac{1}{2} x_1^2$ on $\{u=0\}$ and $\{u=0\}$ has non-empty interior). This together with the $C^0_{\loc}$ convergence proves \eqref{nonpositive:blowdown1}.
	
	\textit{Step 2: Strong $W^{1,2}_{\loc}$ convergence:}\\
	In this step we will improve the weak $W^{1,2}(B_R)$ convergence to strong $W^{1,2}(B_{\frac{R}{2}})$ convergence. To this end we let $(\eta_j)_{j\in \N}$ be a smooth approximation of $\chi_{B_{\frac{R}{2}}}$ in $B_R$, i.e. $\eta_j \in C^{\infty}_c(B_R)$, $\eta_j \equiv 1$ in $B_{\frac{R}{2}}$ and $\eta_j \to \chi_{B_{\frac{R}{2}}}$ pointwise. We then observe that since $w\Delta w \geq 0$ we obtain
	\begin{align*}
		\int_{B_{\frac{R}{2}}} \abs{\nabla \tilde{w}_{r_k}}^2
		&\leq \int_{B_R} \eta_j\abs{\nabla \tilde{w}_{r_k}}^2\\
		&= -\int_{B_R} \left(\tilde{w}_{r_k} \nabla \eta_j \cdot \nabla \tilde{w}_{r_k} + \eta_j \tilde{w}_{r_k} \Delta \tilde{w}_{r_k}\right) \\
		&\leq  -\int_{B_R} \tilde{w}_{r_k} \nabla \eta_j \cdot \nabla \tilde{w}_{r_k}.
	\end{align*}
	Now passing the limit as $k\to \infty$ we find that
	\begin{align*}
		\limsup_{k\to\infty} \int_{B_{\frac{R}{2}}} \abs{\nabla \tilde{w}_{r_k}}^2 
		&\leq -\int_{B_R}  v_0\nabla \eta_j \cdot \nabla v_0\\
		& = \int_{B_R} \left(\eta_{j} \abs{\nabla v_0}^2 + \eta_j v_0 \Delta v_0\right)\\
		&= \int_{B_R} \eta_{j} \abs{\nabla v_0}^2.
	\end{align*}
	Now passing $j \to \infty$ we obtain 
	\begin{equation*}
		\limsup_{k\to\infty} \int_{B_{\frac{R}{2}}} \abs{\nabla \tilde{w}_{r_k}}^2 \leq \int_{B_{\frac{R}{2}}}\abs{\nabla v_0}^2,
	\end{equation*}
	so that by lower semicontinuity of the Dirichlet energy we have
	\begin{equation*}
		\lim_{k\to\infty} \int_{B_{\frac{R}{2}}} \abs{\nabla \tilde{w}_{r_k}}^2 = \int_{B_{\frac{R}{2}}}\abs{\nabla v_0}^2.
	\end{equation*}
	
	Since for each $B_R \subset \R^2$ we have that $W^{1,2}(B_R) \subset W^{1,\frac32}(B_R)$ by H\"older's inequality, and $W^{1,\frac32}(B_R)$ embeds compactly in $L^2(B_R)$ \cite[Section 5.7, Theorem 1]{evanspde}, the strong $W^{1,2}_{\loc}$ convergence follows.
	
	\end{proof}
	At this point it is difficult to conclude precisely what the blow-down is, or even whether the blow-down obtained depends on the chosen blow-down sequence or not. To overcome this issue, we will now blow-down the blow-down limits obtained in Proposition \ref{blowdownanalysis} and classify these first. To this end we define, in polar coordinates, the function
	\begin{equation}\label{wsin}
		\hat{v}_{3/2}(r,\theta) = -r^{3/2}\cos(\theta)\norm{r^{3/2}\cos(\theta)}_{L^2(\p B_1)}^{-1}
	\end{equation}
	Note that $\hat{v}_{3/2}$ is the $3/2$-homogeneous global solution to the thin obstacle problem with coincidence set $\{x_1 =0\} \cap \{x_2 \geq 0\}$ and $\norm{\hat{v}_{3/2}}_{L^2(\p B_1)}=1$. 
	\begin{proposition}
		\label{blowdown2}
		Given a sequence $(r_k)_{k\in \N}$ going to infinity and $v_0$ the corresponding blow-down limit obtained in Proposition \ref{blowdownanalysis}, we define the family of rescalings
		\begin{equation*}
			(\tilde{v}_{0})_s= \frac{v_0(s\cdot)}{\norm{(v_{0})_s}_{L^2(\p B_1)}}.
		\end{equation*}
		For each sequence $(s_k)_{k\in \N}$ such that $s_k \to \infty$ as $k \to \infty$, we have up to a subsequence which we do not relabel, that
		\begin{equation*}
		(\tilde{v}_{0})_{s_k} \to \hat{v}_{3/2}  
		\end{equation*}
		weakly in $W^{1,2}_{\loc}(\R^2)$ and strongly in  $C^1_{\loc}(\overline{\R^2_{\pm}})$.
	\end{proposition}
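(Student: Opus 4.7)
The plan is to mimic Proposition \ref{blowdownanalysis}, now applied to the family $(\tilde v_0)_s$ in place of $(\tilde w_r)$. Scale invariance of the Almgren frequency together with $\phi(\cdot,v_0)\leq 2$ lets the argument of Proposition \ref{uniformW12} deliver uniform $W^{1,2}(B_R)$ bounds on $(\tilde v_0)_s$ for every $R>1$, while the optimal regularity of Theorem \ref{optimalregularity} applied to the thin obstacle solution $v_0$ yields uniform $C^{1,1/2}(\overline{B_R^\pm})$ bounds for its rescalings. Extracting a subsequence gives $(\tilde v_0)_{s_k}\to v_\infty$ weakly in $W^{1,2}_{\loc}(\R^2)$ and in $C^1_{\loc}(\overline{\R^2_\pm})$, with strong $W^{1,2}_{\loc}$ convergence recovered exactly as in Step~2 of Proposition \ref{blowdownanalysis}. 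Passing to the limit preserves the thin obstacle equation \eqref{thinobstacleglobal}, the normalization $\norm{v_\infty}_{L^2(\p B_1)}=1$, the monotonicity $\p_2 v_\infty\leq 0$, the sign condition $v_\infty\leq 0$ on $\{x_2\geq 0\}$, the support condition $\supp(\Delta v_\infty)\subset\{x_1=0\}\cap\{x_2\geq 0\}$, and the frequency bound $\phi(R,v_\infty)\leq 2$ for every $R>0$.

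Next I invoke Almgren's monotonicity, Proposition \ref{almgrenmonotonicitythin}: $R\mapsto\phi(R,v_0)$ is non-decreasing and bounded above by $2$, so $\kappa:=\lim_{R\to\infty}\phi(R,v_0)\in[0,2]$ exists. By scale invariance, $\phi(R,(\tilde v_0)_{s_k})=\phi(Rs_k,v_0)\to\kappa$ as $k\to\infty$, and combining strong $W^{1,2}_{\loc}$ convergence in the numerator with $L^2(\p B_R)$ convergence in the denominator forces $\phi(R,v_\infty)\equiv\kappa$ for every $R>0$. The rigidity half of Proposition \ref{almgrenmonotonicitythin} then forces $v_\infty$ to be $\kappa$-homogeneous.

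By Lemma \ref{thinclassification} combined with $\kappa\leq 2$, the admissible values reduce to $\kappa\in\{1,3/2,2\}$. The two extremes are ruled out using the inherited qualitative information: the $1$-homogeneous candidate is a nonzero multiple of $\abs{x_2}$, while the $2$-homogeneous candidate is a harmonic polynomial of the form $a(x_1^2-x_2^2)+2bx_1x_2$; in each case requiring simultaneously $v_\infty\geq 0$ on $\{x_1=0\}$ and $v_\infty\leq 0$ on $\{x_2\geq 0\}$ (evaluated on both signs of the free coordinate in the $2$-homogeneous case) kills all coefficients, contradicting the normalization $\norm{v_\infty}_{L^2(\p B_1)}=1$. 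Hence $\kappa=3/2$, and Lemma \ref{thinclassification} pins $v_\infty$ down to a scalar multiple of $Re(x_1+\abs{x_2}i)^{3/2}$; the inherited sign conditions together with the $L^2(\p B_1)$ normalization fix that scalar uniquely, giving $v_\infty=\hat v_{3/2}$.

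Since the subsequential limit is independent of $(s_k)$, the claimed convergence of the full family follows. The main obstacle is the classification step: ensuring that the four qualitative conditions inherited from $w$ via $v_0$ are in fact rich enough to exclude every non-$3/2$-homogeneous candidate and to pin down the scalar multiple in the $3/2$ case. The upgrade from weak $W^{1,2}_{\loc}$ convergence to $C^1_{\loc}(\overline{\R^2_\pm})$ convergence is then routine via Arzelà--Ascoli given the uniform $C^{1,1/2}$ bounds on each closed half-space supplied by Theorem \ref{optimalregularity}.
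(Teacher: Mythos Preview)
Your overall strategy coincides with the paper's: mimic Propositions~\ref{uniformW12} and~\ref{blowdownanalysis} (with Theorem~\ref{optimalregularity} replacing Lemma~\ref{holderboundlemma} to upgrade to $C^1_{\loc}(\overline{\R^2_\pm})$), pass the properties \eqref{norm1blowdown}--\eqref{frequency:blowdown:bound} to the limit, use Almgren monotonicity to force homogeneity, and then eliminate $\kappa\in\{1,2\}$ via Lemma~\ref{thinclassification}. The compactness, convergence, and homogeneity steps are essentially identical to the paper's.

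There is one genuine slip in your elimination of $\kappa=1$. You identify the $1$-homogeneous candidate as a multiple of $\lvert x_2\rvert$, reading Lemma~\ref{thinclassification} verbatim; but for the thin obstacle problem on the line $\{x_1=0\}$ (which is how \eqref{thinobstacleglobal} is set up here), $\lvert x_2\rvert$ is not a solution: its Laplacian is a positive measure supported on $\{x_2=0\}$. The actual $1$-homogeneous solutions of \eqref{thinobstacleglobal} are the functions $\alpha x_1+\beta\lvert x_1\rvert$ with $\beta\le 0$, and your two sign conditions ($v_\infty\ge 0$ on $\{x_1=0\}$, $v_\infty\le 0$ on $\{x_2\ge 0\}$) do \emph{not} exclude $-\lvert x_1\rvert$: it vanishes on $\{x_1=0\}$ and is $\le 0$ everywhere. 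The paper closes this case by invoking the support condition $\supp(\Delta v_{00})\subset\{x_1=0\}\cap\{x_2\ge 0\}$ (your inherited property as well), which $-\lvert x_1\rvert$ violates since $\Delta(-\lvert x_1\rvert)$ is supported on all of $\{x_1=0\}$. Once you use the support condition here, your argument is complete and matches the paper's. Your treatment of $\kappa=2$ is fine, though the paper shortens it by first noting that $v_{00}$ vanishes on the ray $\{x_1=0,\,x_2\ge 0\}$, which for a polynomial forces $a=0$ immediately and then uses $\partial_2 v_{00}\le 0$ to kill $b$.
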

	
	\begin{proof} 
		Since the proof is carried out along the same lines as Propositions \ref{uniformW12} and \ref{blowdownanalysis} we will only point out the differences. \\
		\textit{Step 1: Uniform $W^{1,2}(B_R)$ bounds for each $R>1$:}\\
		This step is the same as Proposition \ref{uniformW12} except we do not obtain $C^0_{\loc}$ convergence from Lemma \ref{holderboundlemma}. Rather, from Theorem \ref{optimalregularity} applied in $\overline{B_{R}^{\pm}}$, we obtain convergence in $C^1_{\loc}(\overline{\R^2_{\pm}})$. \\
		\textit{Step 2: Convergence to some blow-down limit:}\\
		Arguing as in Step 1 of Proposition \ref{blowdownanalysis} (again with Lemma \ref{holderboundlemma} replaced by Theorem \ref{optimalregularity}) we obtain weak $W^{1,2}_{\loc}(\R^2)$, strong $L^2(\p B_1)$ and strong $C^1_{\loc}(\overline{\R^2_{\pm}})$ convergence to some limit $v_{00} \in W^{1,2}_{\loc}(\R^2)$. Moreover,
		\begin{equation}
			\label{norm1blowdown2}
			\norm{v_{00}}_{L^2(\p B_1)} =1 ,
		\end{equation}
		\begin{equation}\label{supportmeasure2}
			\supp(\Delta v_{00}) \subset \{x_1 = 0\} \cap \{x_2 \geq 0\},
		\end{equation}
		\begin{equation}\label{nonpositivemeasure2}
			\Delta v_{00} \leq 0,
		\end{equation}
		\begin{equation}
		\label{monotonicty:blowdown2}
		\p_2v_{00} \leq 0,
		\end{equation}
		\begin{equation}
			\label{nonpositive:blowdown2}
			v_{00}  \leq 0 \hspace{2mm} \text{ in } \{x_2 \geq 0\},
		\end{equation}
		and
		\begin{equation}
			\label{frequency:blowdown2:bound}
			\phi(R,v_{00} ) \leq 2 \hspace{5mm} \text{for all } R>0,
		\end{equation}
		are direct consequences of \eqref{norm1blowdown}, \eqref{supportmeasure}, \eqref{nonpositivemeasure}, \eqref{monotonicty:blowdown1}, \eqref{nonpositive:blowdown1} and \eqref{frequency:blowdown:bound}.
		
		\textit{Step 3: Classification of the limits $v_{00}$:}\\
		The first observation is that $v_{00}$ is homogeneous. Indeed using the monotonicity of the Almgren frequency, Proposition \ref{almgrenmonotonicitythin}, as well as the convergence in $C^1_{\loc}(\overline{\R^2_{\pm}})$, we see that for each $R>0$,
		\begin{equation*}
			\phi(R, v_{00} ) = \lim_{k\to \infty} \phi(R s_k,v_0 ) = \phi(+\infty,v_0).
		\end{equation*}
		In this last step we used the monotonicity of $\phi(\rho,v_0)$ in $\rho$ (Proposition \ref{almgrenmonotonicitythin}) and \eqref{frequency:blowdown2:bound} to see that the limit $\phi(+\infty,v_0)$ is well defined. Using Proposition \ref{almgrenmonotonicitythin} and \eqref{frequency:blowdown2:bound} again, we see that $v_{00}$ is a homogeneous function and $\phi(R,v_{00}) \leq 2$ for all $R>0$. By Lemma \ref{thinclassification}, we then have that the frequency is either $2, \frac32$ or $1$. 
		
		If the frequency was $2$ then, by Lemma \ref{thinclassification}, $v_{00}$ would be a $2$-homogeneous harmonic polynomial vanishing along $\{x_1=0\}$. However, in two dimensions the only 2-homogeneous harmonic polynomials satisfying this condition are multiples of $x_1x_2$. Clearly, \eqref{monotonicty:blowdown2} excludes $ v_{00}$ from being a multiple of $x_1x_2$ and so the frequency cannot be $2$.
		
		If on the other hand the frequency was $1$, then again by Lemma \ref{thinclassification}, $v_{00}$ would be a $1$-homogeneous harmonic polynomial in both $\{x_1 \geq 0\}$ and $\{x_1 \leq 0\}$ vanishing along $\{x_1=0\}$. In this case, taking into account \eqref{nonpositivemeasure2}, the options are $\pm x_1$ or $-\abs{x_1}$ which both cannot occur. Indeed $\pm x_1$ violates  \eqref{nonpositive:blowdown2}, while $-\abs{x_1}$ does not satisfy \eqref{supportmeasure2}. 
		
		Since the blow-down cannot be identically zero by \eqref{norm1blowdown2}, we conclude that $v_{00}$ is the $3/2$ homogeneous solution $\hat{v}_{3/2}$ defined in \eqref{wsin}.
		\end{proof}

\begin{proposition}
\label{frequencyconvergesprop}
For each $\eps > 0$ there exists an $r_1(\eps)$ such that 
\begin{equation}
	\label{frequencyconverges}
	\frac{3}{2} - \eps \leq \phi(r,w) \leq \frac{3}{2} + \eps
\end{equation}
for $r\geq r_1(\eps)$. 
\end{proposition}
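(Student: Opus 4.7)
The plan is a compactness argument combining the blow-down classifications of Propositions~\ref{blowdownanalysis} and~\ref{blowdown2} with the scaling identity $\phi(\rho,\tilde{w}_{r_k})=\phi(\rho r_k,w)$ and continuity of the frequency under the blow-down convergence. For any sequence $r_k\to\infty$, Proposition~\ref{blowdownanalysis} produces a subsequential blow-down $v_0$ to which $\tilde{w}_{r_k}$ converges strongly in $W^{1,2}_{\loc}$ and locally uniformly. Applying Proposition~\ref{blowdown2} to $v_0$ gives $\phi(s,v_0)=\phi(1,(\tilde{v}_{0})_s)\to\phi(1,\hat{v}_{3/2})=3/2$ as $s\to\infty$, and the monotonicity from Proposition~\ref{almgrenmonotonicitythin} then yields $\phi(\rho,v_0)\leq 3/2$ for every $\rho>0$. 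Strong convergence in $W^{1,2}_{\loc}$ together with uniform convergence on $\partial B_\rho$ give $\phi(\rho r_k,w)\to\phi(\rho,v_0)$ for every $\rho>0$.

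Applying this identity with $\rho=1$ to any sequence realising $\limsup_{r\to\infty}\phi(r,w)$ immediately yields $\limsup_{r\to\infty}\phi(r,w)\leq 3/2$. For the matching lower bound, set $\underline{\ell}:=\liminf_{r\to\infty}\phi(r,w)$, choose $r_k\to\infty$ with $\phi(r_k,w)\to\underline{\ell}$, and blow down to some $v_0$, so $\phi(1,v_0)=\underline{\ell}$. Since $\rho r_k\to\infty$ for each fixed $\rho>0$, the definition of liminf gives $\phi(\rho r_k,w)\geq\underline{\ell}-\delta$ for $k$ large and any $\delta>0$; sending $k\to\infty$ and then $\delta\to 0$ yields $\phi(\rho,v_0)\geq\underline{\ell}$ for every $\rho>0$. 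Combined with $\phi(\rho,v_0)\leq\phi(1,v_0)=\underline{\ell}$ for $\rho\leq 1$ from monotonicity, this forces $\phi(\cdot,v_0)\equiv\underline{\ell}$ on $(0,1]$, so by Proposition~\ref{almgrenmonotonicitythin} $v_0$ is $\underline{\ell}$-homogeneous on $B_1$.

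To upgrade this homogeneity to a global statement, observe that $v_0$ is harmonic on the connected open set $\R^2\setminus(\{x_1=0\}\cap\{x_2\geq 0\})$ and hence real analytic there, so the identity $v_0(tx)=t^{\underline{\ell}}v_0(x)$ valid on $B_1$ propagates to all of $\R^2$ by unique continuation. Lemma~\ref{thinclassification} then restricts $\underline{\ell}\in\{1,\tfrac{3}{2},2,\tfrac{5}{2},\dots\}$, and combined with $\underline{\ell}\leq 3/2$ this leaves $\underline{\ell}\in\{1,3/2\}$. The value $\underline{\ell}=1$ would imply $\phi(s,v_0)\equiv 1$, contradicting $\phi(s,v_0)\to 3/2$ established in the first paragraph. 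Hence $\underline{\ell}=3/2$, and together with the upper bound this yields $\phi(r,w)\to 3/2$, which is the claim.

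The main obstacle is the lower bound: it would be routine if monotonicity of $\phi(\cdot,w)$ itself were available, but almost-monotonicity for $w$ is only established later, in Section~\ref{section:sharpgrowth}, and cannot be used here. The argument instead traps the limiting frequency $\phi(\cdot,v_0)$ between a liminf-driven lower bound transferred from infinity and the Almgren monotonicity upper bound to force $\underline{\ell}$-homogeneity on $B_1$, and then upgrades to global homogeneity via analytic continuation so that the classification of homogeneous thin-obstacle solutions can be invoked.
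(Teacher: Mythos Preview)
Your proof is correct and takes a genuinely different route from the paper's. The paper argues by a single direct contradiction: it fixes $s_0$ large enough that $|\phi(s_0,v_0)-\tfrac32|<\tfrac{\eps}{2}$, and then shows that if the proposition failed one could blow down along a bad sequence $r_k\to\infty$ to obtain a limit $v_0$ with $\phi(s_0,v_0)\leq\tfrac32-\eps$, contradicting the choice of $s_0$. This is short, though the paper is terse about why a single $s_0$ can be chosen in advance, independent of which blow-down $v_0$ eventually appears in the contradiction step. Your approach avoids that point entirely by treating $\limsup$ and $\liminf$ separately: the upper bound comes straight from monotonicity of $\phi(\cdot,v_0)$ together with $\phi(s,v_0)\to\tfrac32$, while for the lower bound you trap $\phi(\cdot,v_0)$ at the constant value $\underline{\ell}$ on $(0,1]$ and then propagate homogeneity to all of $\R^2$ via analytic continuation through the connected slit complement $\R^2\setminus(\{x_1=0\}\cap\{x_2\ge 0\})$. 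This is longer but fully self-contained and makes no implicit uniformity assumption over the family of possible blow-downs. One minor simplification: once you know $v_0$ is globally $\underline{\ell}$-homogeneous you have $\phi(s,v_0)\equiv\underline{\ell}$ for every $s>0$, and since you already established $\phi(s,v_0)\to\tfrac32$ this forces $\underline{\ell}=\tfrac32$ immediately; the detour through Lemma~\ref{thinclassification} to first restrict $\underline{\ell}$ to $\{1,\tfrac32\}$ is unnecessary.
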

\begin{proof}
	For each $\eps >0$ we first fix $s_0=s_0(\eps) \geq 1$ large enough so that
	\begin{equation}\label{s0def}
		\abs{\phi(s,v_0) - \frac32} < \frac{\eps}{2}
	\end{equation}
	for every $s\geq s_0$. We will show the following claim.
	\begin{claim}{}
	There exists an $r_0=r_0(\eps) \geq 1$ such that 
	\begin{equation}
		\label{frequencyconverges:claim}
		\frac32 - \eps < \phi(s_0r,w)  < \frac32+ \eps
	\end{equation}
	for every $r\geq r_0$.
	\end{claim}
	The Proposition then follows from the claim by taking $r_1(\eps) = s_0r_0$. 
	\begin{proof}[Proof of Claim:]
		We will prove that the lower bound in \eqref{frequencyconverges:claim} must hold, and then an identical argument can be used to show that the upper bound must hold too. Suppose that for some $\eps >0$ the lower bound in \eqref{frequencyconverges:claim} does not hold. Then for every $k \in \N$ there exists $r_k \geq k$ such that
		\begin{equation}\nonumber
			\phi(s_0r_k,w) < \frac{3}{2} - \eps.
		\end{equation}
		Then letting $k\to\infty$ we find that 
		\begin{equation}\nonumber
			\phi(s_0,v_0) \leq \frac{3}{2} - \eps,
		\end{equation}
		which is a contradiction to \eqref{s0def}.
	\end{proof}
\end{proof}
Proposition \ref{frequencyconvergesprop} tells us that $\lim_{r\to\infty} \phi(r,w) = \frac{3}{2}$ and so we can now use this to classify the blow-down limits obtained in Proposition \ref{blowdownanalysis}.

\begin{corollary}\label{corollary:blowdown:homogeneous}
	Given a sequence $(r_k)_{k\in \N}$ going to infinity and $v_0$ the corresponding blow-down limit obtained in Proposition \ref{blowdownanalysis} we have that $v_0 = \hat{v}_{3/2}$.
\end{corollary}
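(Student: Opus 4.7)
The plan is to exploit the fact that the frequency of $w$ at infinity is now known to equal $\tfrac{3}{2}$ (Proposition \ref{frequencyconvergesprop}) in order to pin down the Almgren frequency of $v_0$ at every radius, and then combine the thin obstacle homogeneity rigidity with the classification of homogeneous thin obstacle solutions.

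First I would observe that the Almgren quotient is scale invariant, so $\phi(R,\tilde{w}_{r_k})=\phi(Rr_k,w)$ for every $R>0$, and by Proposition \ref{frequencyconvergesprop} the right-hand side tends to $\tfrac{3}{2}$ as $k\to\infty$. On the other hand, the strong $W^{1,2}_{\loc}(\R^2)$ convergence $\tilde{w}_{r_k}\to v_0$ established in Proposition \ref{blowdownanalysis}, together with the continuity of the trace operator $W^{1,2}(B_R)\to L^2(\p B_R)$, yields the convergence of both the Dirichlet energy on $B_R$ and the $L^2$ boundary norm on $\p B_R$. At every $R>0$ at which the limit $\int_{\p B_R}v_0^2\,d\sigma$ is positive, I may therefore pass to the limit inside the quotient and deduce $\phi(R,v_0)=\tfrac{3}{2}$.

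Next I would upgrade this identity to hold at every $R>0$. Since $v_0$ is a nontrivial global thin obstacle solution (by the normalization \eqref{norm1blowdown}) and is real-analytic on the connected open set $\R^2\setminus(\{x_1=0\}\cap\{x_2\geq 0\})$, it cannot vanish identically on a family of circles of positive radial measure without being identically zero. Hence the set of ``good'' radii is dense in $(0,\infty)$, and combining this density with the monotonicity of $r\mapsto\phi(r,v_0)$ from Proposition \ref{almgrenmonotonicitythin} forces $\phi(r,v_0)\equiv\tfrac{3}{2}$. The rigidity case of Proposition \ref{almgrenmonotonicitythin} then implies that $v_0$ is $\tfrac{3}{2}$-homogeneous, and Lemma \ref{thinclassification} (applied with $\kappa=2m-\tfrac{1}{2}$, $m=1$) identifies $v_0$ as a constant multiple of $\mathrm{Re}(x_1+i\abs{x_2})^{3/2}$.

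The constant is then fixed by the remaining properties collected in Proposition \ref{blowdownanalysis}: together with \eqref{supportmeasure}, the sign condition \eqref{nonpositive:blowdown1} selects the branch for which $v_0\leq 0$ on $\{x_2\geq 0\}$, and the normalization \eqref{norm1blowdown} fixes its magnitude, so $v_0=\hat{v}_{3/2}$. The main subtlety in this plan is the passage to the limit in the Almgren quotient at radii where $\int_{\p B_R}v_0^2\,d\sigma$ might degenerate; this is resolved by density of the good radii together with the thin obstacle monotonicity formula, as above.
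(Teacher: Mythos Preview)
Your proof is correct and follows essentially the same route as the paper: pass to the limit in the Almgren frequency using the strong $W^{1,2}_{\loc}$ convergence from Proposition~\ref{blowdownanalysis} together with Proposition~\ref{frequencyconvergesprop}, deduce that $v_0$ is $\tfrac{3}{2}$-homogeneous via the rigidity in Proposition~\ref{almgrenmonotonicitythin}, and then identify $v_0$ using the classification and the sign/normalisation properties \eqref{norm1blowdown}, \eqref{supportmeasure}, \eqref{nonpositive:blowdown1}. The paper simply refers back to ``Step~3 of Proposition~\ref{blowdown2}'' for the identification, whereas you spell it out directly from Lemma~\ref{thinclassification}; these are equivalent. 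Your extra care about radii at which $\int_{\p B_R}v_0^2\,d\sigma$ might vanish is a valid technical point the paper passes over silently, and your density-plus-monotonicity patch handles it correctly (one could also note that $r\mapsto H(r,v_0)$ is nondecreasing since $v_0\Delta v_0=0$, so $H(R,v_0)=0$ for some $R>0$ would force $v_0\equiv 0$ on $B_R$ and hence everywhere by analyticity, contradicting \eqref{norm1blowdown}).
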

\begin{proof}
	By Proposition \ref{frequencyconvergesprop} we now have for each $R>0$ that
	\begin{equation*}
		\phi(R, v_0) = \lim_{k\to \infty} \phi(R r_k,w) = \frac{3}{2},
	\end{equation*}
	so that $v_0$ is $\frac32$-homogeneous. Arguing as in Step 3 of Proposition \ref{blowdown2} we conclude that $v_0 = \hat{v}_{3/2}$.
\end{proof}

\section{Growth estimates for the coincidence set}\label{section:coincidence:estimates}

We will begin with the following $3/2$-order doubling.  

\begin{lemma}\label{doublinglemma}
	Given $\lambda \in (0,1)$ there exists an $\bar{r}=\bar{r}(\lambda)$ so that 
	\begin{equation}\label{doubling}
		\norm{w_r}_{L^2(\p B_1)} \leq C(\lambda)r^{\frac{3}{2} + \lambda}
	\end{equation}
	for all $r\geq \bar{r}(\lambda)$. As a consequence,
	\begin{equation}\label{3/2growth}
		C \cap \{y_2 \geq \bar{r}(\lambda)\} \subset \{\abs{y_1}^2 \leq y_2^{\frac{3}{2}+\lambda}\}
	\end{equation}
\end{lemma}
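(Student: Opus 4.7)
The plan is to combine Corollary \ref{corollary:blowdown:homogeneous} with a subharmonicity-based monotonicity of $M(r) := \norm{w_r}_{L^2(\p B_1)}$ to extract the doubling bound \eqref{doubling}, and then to transfer this bound to the coincidence set via Lemma \ref{holderboundlemma}.

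First I would establish that $M$ is non-decreasing in $r$. Remark \ref{signedremark} gives $w\Delta w \geq 0$, so $\Delta(w^2) = 2\abs{\nabla w}^2 + 2 w\Delta w \geq 0$, meaning $w^2$ is subharmonic; the classical mean-value monotonicity then forces $r \mapsto \strokedint_{\p B_r} w^2\,d\sigma = \frac{M(r)^2}{2\pi}$ to be non-decreasing. Next I would identify the limiting doubling ratio. Given any sequence $r_k \to \infty$, Corollary \ref{corollary:blowdown:homogeneous} yields $\tilde{w}_{r_k}\to \hat{v}_{3/2}$ in $C^0_{\loc}(\R^2)$. Since $\tilde{w}_{r_k}(2\cdot) = w_{2r_k}/M(r_k)$, taking $L^2(\p B_1)$-norms and invoking the $3/2$-homogeneity of $\hat{v}_{3/2}$ gives
\begin{equation*}
	\frac{M(2r_k)}{M(r_k)} \;=\; \norm{\tilde{w}_{r_k}(2\cdot)}_{L^2(\p B_1)} \;\longrightarrow\; \norm{\hat{v}_{3/2}(2\cdot)}_{L^2(\p B_1)} \;=\; 2^{3/2}.
\end{equation*}
Since every subsequence yields the same limit, $\lim_{r\to\infty} M(2r)/M(r)=2^{3/2}$. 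Hence for each $\lambda\in(0,1)$ there exists $\bar{r}_0(\lambda)$ with $M(2r)\leq 2^{3/2+\lambda/2} M(r)$ for all $r\geq \bar{r}_0(\lambda)$. Iterating this dyadically and filling in non-dyadic radii via the monotonicity of $M$ yields $M(r)\leq C(\lambda) r^{3/2+\lambda/2}\leq C(\lambda) r^{3/2+\lambda}$ for $r\geq \bar{r}(\lambda)$, which is \eqref{doubling}.

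To conclude \eqref{3/2growth}, I would apply Lemma \ref{holderboundlemma} at scale $r=2\abs{y}$ to bound $\abs{w(y)} \leq C\norm{w_{2\abs{y}}}_{L^2(B_1)}$. The subharmonicity of $w^2$ (expressed in polar coordinates, $\int_{B_1} w_r^2 = \int_0^1 \int_{\p B_s} w_r^2\, d\sigma\, ds$) combined with the $s\mapsto M(rs)$ monotonicity yields $\norm{w_r}_{L^2(B_1)}\leq M(r)/\sqrt{2}$, hence $\abs{w(y)}\leq C M(2\abs{y})\leq C(\lambda)\abs{y}^{3/2+\lambda}$. On $\mathcal{C}$ one has $w(y)=-\tfrac{1}{2}y_1^2$, giving $y_1^2 \leq C(\lambda)\abs{y}^{3/2+\lambda}$. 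A dichotomy then finishes the proof: if $\abs{y_1}\geq y_2$ then $\abs{y}^2 \leq 2 y_1^2 \leq 2C(\lambda)\abs{y}^{3/2+\lambda}$, forcing $\abs{y}^{1/2-\lambda}\leq C'$, which contradicts $y_2\geq \bar{r}(\lambda)$ once $\bar{r}(\lambda)$ is taken large enough; in the remaining case $\abs{y_1}\leq y_2$ we have $\abs{y}\leq\sqrt{2}y_2$, so $y_1^2\leq C(\lambda) y_2^{3/2+\lambda/2}$, which absorbs into $y_2^{3/2+\lambda}$ after further enlarging $\bar{r}(\lambda)$.

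The main obstacle really lies in the input from Corollary \ref{corollary:blowdown:homogeneous}: once the blow-down is uniquely identified as $\hat{v}_{3/2}$, the doubling ratio $2^{3/2}$ is forced, and the rest is standard manipulation. The only residual care is in the final step, where the pointwise $\abs{y}$-growth must be converted to $y_2$-growth using the geometric constraint $\mathcal{C}\cap\{y_2\leq 0\}=\{0\}$ built into the $x_2$-monotone setup.
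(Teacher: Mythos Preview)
Your proof is correct and follows essentially the same route as the paper: both derive \eqref{doubling} by computing the doubling ratio $M(2r)/M(r)\to 2^{3/2}$ from the blow-down classification (Corollary~\ref{corollary:blowdown:homogeneous}) and then iterating, while for \eqref{3/2growth} the paper simply cites \cite[Proposition~5.1]{eberleduke2022} and your explicit argument via Lemma~\ref{holderboundlemma} is in the same spirit. One minor point: in your dichotomy the inequality $\abs{y}^{1/2-\lambda}\leq C'$ is only a contradiction when $\lambda<\tfrac12$; this is easily repaired either by retaining your sharper exponent $\tfrac32+\tfrac{\lambda}{2}$ through the dichotomy (so the power becomes $\tfrac12-\tfrac{\lambda}{2}>0$ for all $\lambda\in(0,1)$), or by noting that \eqref{3/2growth} for $\lambda\geq\tfrac12$ follows from the already-established small-$\lambda$ case.
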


\begin{proof}
	Firstly \eqref{3/2growth} follows from \eqref{doubling} as in \cite[Proposition 5.1]{eberleduke2022}. 
	To obtain \eqref{doubling} we observe that as $r\to \infty$ we have
	\begin{align*}
		\frac{\int_{\p B_1} w_{2r}^2d\sigma}{\int_{\p B_1} w_r^2d\sigma}
		&= \frac12\frac{\int_{\p B_2} w_{r}^2d\sigma}{\int_{\p B_1} w_r^2d\sigma}\\
		&= \frac{1}{2} \int_{\p B_2} \tilde{w}_r^2d\sigma \\
		&\to \frac12 \int_{\p B_2} v^2d\sigma\\
		& = \frac12 \frac{\int_{\p B_2} (2)^3\sin^2(\frac32\theta)d\sigma}{\int_{\p B_1} (1)^3\sin^2(\frac32\theta)d\sigma}\\
		&= 8.
	\end{align*}
	Then denoting $f(r) := \norm{w_r}_{L^2(\p B_1)}$ we have that for all $\lambda >0$ there exists an $r(\lambda) < + \infty$ so that 
	\begin{equation*}
		f(2r) \leq 2^{\frac{3}{2}+\lambda}f(r), \hspace{2mm} \text{for all } r\geq r(\lambda).
	\end{equation*}
	Iterating this we find that 
	\begin{equation*}
		f(r)\leq C(\lambda) r^{\frac32+\lambda} \hspace{2mm} \text{for all } r\geq r(\lambda).
	\end{equation*}
\end{proof}

We can now show the following almost optimal growth estimate on the coincidence set. 

\begin{proposition}\label{Cdelta}
	Given $\delta \in (0,1)$ there exists an $r_2(\delta) > 1$ so that 
	\begin{equation}\label{deltagrowth}
		C \cap \{y_2 \geq r_2(\delta)\} \subset \{\abs{y_1}^2 \leq y_2^{1+\delta}\}
	\end{equation}
	and
	\begin{equation}\label{coincidencesetbounded}
		C \cap \{y_2 \leq r_2(\delta)\} \subset \{-r_2(\delta)^{\frac{1+\delta}{2}} \leq y_1 \leq r_2(\delta)^{\frac{1+\delta}{2}} \} \cap \{0 \leq y_2 \leq r_2(\delta) \}
	\end{equation}
\end{proposition}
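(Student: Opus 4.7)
The plan for \eqref{deltagrowth} is to argue by contradiction and iterative refinement, exploiting the $C^0_{\loc}$ convergence $\tilde w_r\to\hat v_{3/2}$ from Corollary~\ref{corollary:blowdown:homogeneous}; then \eqref{coincidencesetbounded} drops out from convexity and $x_2$-monotonicity.

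Suppose towards contradiction there exists a sequence $(y_1^k,y_2^k)\in\mathcal{C}$ with $y_2^k\to\infty$ and $(y_1^k)^2>(y_2^k)^{1+\delta}$. Set $r_k:=y_2^k$. From $u(y_1^k,y_2^k)=0$ one has $w(y_1^k,y_2^k)=-(y_1^k)^2/2$, so the rescaled point $\xi^k:=(y_1^k/r_k,1)$ satisfies
\[
\tilde w_{r_k}(\xi^k)=-\frac{(y_1^k)^2}{2\|w_{r_k}\|_{L^2(\partial B_1)}}.
\]
Lemma~\ref{doublinglemma} forces $\xi^k\to(0,1)$. Combining the $C^0_{\loc}$-convergence $\tilde w_r\to\hat v_{3/2}$, the fact that $\hat v_{3/2}$ vanishes linearly transverse to its contact set (visible from the explicit formula in Lemma~\ref{thinclassification}(iii), giving $|\hat v_{3/2}(x_1,1)|\le L|x_1|$ for small $|x_1|$), and the doubling bound $\|w_r\|_{L^2(\partial B_1)}\le C(\mu) r^{3/2+\mu}$ for any $\mu>0$, I would derive the key dichotomy
\begin{equation}\label{key:dich}
(y_1^k)^2\le 2LC(\mu)\,r_k^{1/2+\mu}|y_1^k|+2C(\mu)\,r_k^{3/2+\mu}\,\eta(r_k),\qquad \eta(r)\to 0.
\end{equation}

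For $\delta>1/2$, choosing $\mu<\min(\delta/2,\,\delta-1/2)$ makes both right-hand terms of \eqref{key:dich} negligible compared to $(y_1^k)^2>r_k^{1+\delta}$, giving immediate contradiction. For the full range $\delta\in(0,1)$ the plan is to iterate: any current upper bound $|y_1|\le c_n y_2^{\alpha_n}$ on the coincidence set (starting from $\alpha_0=3/4+\lambda$ via Lemma~\ref{doublinglemma}) plugged into the first term of \eqref{key:dich} yields the recursion $\alpha_{n+1}=\tfrac14+\tfrac{\mu}{2}+\tfrac{\alpha_n}{2}$, a contraction toward the fixed point $\tfrac12+\mu$. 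With $\mu<\delta/2$, finitely many iterations produce $\alpha_n<(1+\delta)/2$, contradicting $|y_1^k|>r_k^{(1+\delta)/2}$.

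For \eqref{coincidencesetbounded}, the $x_2$-monotonicity $\partial_2 u\le 0$ forces the horizontal cross-sections $\{y_1:(y_1,t)\in\mathcal{C}\}$ to be nested and non-decreasing in $t$; hence for $0\le y_2\le r_2(\delta)$ every slice sits inside the slice at height $r_2(\delta)$, which by \eqref{deltagrowth} is contained in $[-r_2(\delta)^{(1+\delta)/2},r_2(\delta)^{(1+\delta)/2}]$. The bound $y_2\ge 0$ is the inclusion $\mathcal{C}\subset\{x_2\ge 0\}$ of Definition~\ref{x2mondef}.

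\emph{Main obstacle.} The subtle point is the $\eta$-term in \eqref{key:dich} when $\delta\le 1/2$: $\eta(r)\to 0$ is known without any algebraic decay rate. The iteration must therefore be organized so that at each stage the threshold $R_n$ is taken large enough for the $\eta$-contribution to remain subdominant to the first term of \eqref{key:dich}, keeping the recursion on $\alpha_n$ valid until it crosses below $(1+\delta)/2$.
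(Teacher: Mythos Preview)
Your iteration scheme does not close for $\delta\le\tfrac12$, and the obstacle you flag is fatal rather than merely subtle. When you plug a current bound $|y_1|\le c_n r^{\alpha_n}$ into the first term of \eqref{key:dich}, the resulting exponent $\tfrac12+\mu+\alpha_n$ is \emph{smaller} than the exponent $\tfrac32+\mu$ of the $\eta$-term whenever $\alpha_n<1$ (which it always is). Hence for large $r$ the $\eta$-term dominates and you only get $|y_1|\le Cr^{3/4+\mu/2}\sqrt{\eta(r)}$, i.e.\ $|y_1|=o(r^{3/4+\mu/2})$; feeding this back in yields the same bound again, so there is no genuine recursion. Making the $\eta$-contribution subdominant would require $\eta(r)\lesssim r^{\alpha_n-1}$, a polynomial decay rate that the $C^0_{\loc}$ convergence from Corollary~\ref{corollary:blowdown:homogeneous} does not supply.

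The paper circumvents this entirely by a different idea: instead of evaluating $\tilde w_{r_k}$ at $\xi^k=(y_1^k/r_k,1)$, which drifts into the zero set $\{x_1=0,x_2\ge0\}$ of $\hat v_{3/2}$, it uses the concavity $\partial_{11}w\le0$ (Remark~\ref{signedremark}) to propagate the values $w(x^k)=-\tfrac12(x_1^k)^2$ and $\partial_1 w(x^k)=-x_1^k$ outward along the $x_1$-axis to the fixed point $(\rho_k,\rho_k)$ with $\rho_k=x_2^k$. This yields $|w(\rho_k,\rho_k)|\ge c(\delta)\rho_k^{(3+\delta)/2}$, and dividing by $\|w_{\rho_k}\|_{L^2(\partial B_1)}\le C\rho_k^{3/2+\delta/4}$ from Lemma~\ref{doublinglemma} gives $|\tilde w_{\rho_k}(1,1)|\ge c\rho_k^{\delta/4}\to\infty$, contradicting the pointwise limit $\hat v_{3/2}(1,1)$. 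The concavity step works in one shot for every $\delta\in(0,1)$; no iteration and no rate on the convergence is needed. Your derivation of \eqref{coincidencesetbounded} from \eqref{deltagrowth} via $x_2$-monotonicity matches the paper.
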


\begin{proof}
	We first note that \eqref{coincidencesetbounded} follows immediately from \eqref{deltagrowth} and the fact that $\p_2 u \leq 0$. 
	We will now prove \eqref{deltagrowth} by contradiction. If \eqref{deltagrowth} did not hold then there exists some $\delta \in (0,1)$ such that for every $k \in \N$ there exists some point $x^k \in C$ such that $x^k_2 \geq k$ and $\abs{x_1^k} \geq \left( x_2^k \right)^{\frac{1+\delta}{2}}$. With no loss of generality we can take a subsequence, that we do not relabel, such that $x_1^k \geq \left( x_2^k \right)^{\frac{1+\delta}{2}}$.
	
	We define $\rho_k = x_2^k$  and observe that since $w(x) = -\frac{1}{2}x_1^2$ on $C$ we have that
	\begin{equation*}
		w(x^k) = -\frac{1}{2} \left(x^k_1 \right)^2 \leq -\frac12 \rho_k^{1+\delta}
	\end{equation*}
	and
	\begin{equation*}
		\p_1 w(x^k) = - x^k_1 \leq -\rho_k^{\frac{1+\delta}{2}}.
	\end{equation*}
	Now since $\p_{11} w \leq 0$ (c.f. Remark \ref{signedremark}) we have using the above estimates, as well as \eqref{3/2growth} for $k$ large enough, that
	\begin{equation*}
		\abs{w(\rho_k,\rho_k)} \geq c(\delta) \rho_k^{\frac{3+\delta}{2}}. 
	\end{equation*}
	Now using \eqref{doubling} with $\lambda = \frac{\delta}{4}$ we have for $k$ large enough that
	\begin{equation*}
		\abs{\tilde{w}(\rho_k,\rho_k)}= \frac{\abs{w(\rho_k,\rho_k)}}{\norm{w_{\rho_k}}_{L^2(\p B_1)}} \geq c(\delta) \rho_k ^{\frac{\delta}{4}},
	\end{equation*}
	which contradicts the pointwise convergence of $\abs{\tilde{w}(\rho_k,\rho_k)} \to \abs{v(1,1)} < +\infty$.
\end{proof}

\section{Sharp growth estimates of $u-p$}\label{section:sharpgrowth}
We will define 
\begin{equation*}
	H(r,w) = r^{1-n}\int_{\p B_r} w^2d\sigma = \int_{\p B_1} w_r^2d\sigma,
\end{equation*}
and
\begin{equation*}
	D(r,w) = r^{2-n} \int_{B_r} \abs{\nabla w}^2 = \int_{B_1} \abs{\nabla w_r}^2,
\end{equation*}
so that the Almgren frequency is 
\begin{equation*}
	\phi(r,w) = \frac{D(r,w)}{H(r,w)}.
\end{equation*}

We will now collect some known differential identities on the quantities $H(r,w)$ and $\phi(r,w)$. 

\begin{lemma}
	\label{dHlemama}
	For each $r>0$ we have that
	\begin{equation}\label{integrationparts}
		\int_{\p B_1} w_r(w_r)_{\nu}d\sigma = \int_{B_1} \abs{\nabla w_r}^2 + \int_{B_1} w_r \Delta w_r.
	\end{equation}
	Consequently,
	\begin{equation}\label{Hprimepos}
		\frac{d}{dr}H(r,w) = \frac{2}{r} \int_{\p B_1} w_r(w_r)_{\nu}d\sigma \geq 0
	\end{equation}
	and
	\begin{equation}\label{Hprime}
		\frac{d}{dr}\left(\log(H(r,w))\right) = \frac{2}{r} \left(\phi(r,w) + \frac{\int_{B_1} w_r\Delta w_r}{H(r,w)}\right).
	\end{equation}
\end{lemma}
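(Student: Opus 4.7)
The plan is to treat the three assertions as essentially consequences of a single integration-by-parts identity together with the chain rule for the rescaling $w_r(x)=w(rx)$.

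For part \eqref{integrationparts}, I would simply apply Green's first identity (the divergence theorem applied to the vector field $w_r\nabla w_r$) on the unit ball $B_1$. Since on $\partial B_1$ the outward unit normal coincides with the position vector $\nu(x)=x$, $\nabla w_r\cdot\nu = (w_r)_\nu$, and the identity follows directly.

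For \eqref{Hprimepos}, I would differentiate $H(r,w)=\int_{\partial B_1} w_r^2\,d\sigma$ under the integral sign. The chain rule gives $\frac{d}{dr}w_r(x)=\nabla w(rx)\cdot x$, and since $\nabla w_r(x)=r\nabla w(rx)$, this equals $\frac{1}{r}\nabla w_r(x)\cdot x=\frac{1}{r}(w_r)_\nu(x)$ on $\partial B_1$. Thus
\begin{equation*}
\frac{d}{dr}H(r,w)=\int_{\partial B_1}2w_r\cdot\frac{1}{r}(w_r)_\nu\,d\sigma=\frac{2}{r}\int_{\partial B_1}w_r(w_r)_\nu\,d\sigma,
\end{equation*}
which is the first half of \eqref{Hprimepos}. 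For the non-negativity, I would invoke \eqref{integrationparts}: the gradient term $\int_{B_1}|\nabla w_r|^2$ is manifestly non-negative, and $\int_{B_1}w_r\Delta w_r$ is also non-negative. Indeed, $\Delta w_r(x)=r^2(\Delta w)(rx)$ so $w_r(x)\Delta w_r(x)=r^2\,w(rx)(\Delta w)(rx)\geq 0$ pointwise by the sign property $(u-p)\Delta(u-p)\geq 0$ recorded in Remark \ref{signedremark}.

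For \eqref{Hprime}, I would combine the previous two parts: $\frac{d}{dr}\log H(r,w)=H'(r,w)/H(r,w)$, and substituting \eqref{Hprimepos} and then \eqref{integrationparts} gives
\begin{equation*}
\frac{d}{dr}\log H(r,w)=\frac{2}{r}\cdot\frac{\int_{B_1}|\nabla w_r|^2+\int_{B_1}w_r\Delta w_r}{H(r,w)}=\frac{2}{r}\left(\phi(r,w)+\frac{\int_{B_1}w_r\Delta w_r}{H(r,w)}\right),
\end{equation*}
upon recognizing that after the scaling $y=rx$ the definitions of $H$ and $D$ reduce to $H(r,w)=\int_{\partial B_1}w_r^2\,d\sigma$ and $D(r,w)=\int_{B_1}|\nabla w_r|^2$, so $\phi(r,w)=D(r,w)/H(r,w)$ takes the desired form. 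There is no real obstacle here; the entire lemma is a consequence of integration by parts, the chain rule under rescaling, and the pointwise sign from Remark \ref{signedremark}.
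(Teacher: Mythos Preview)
Your proposal is correct and follows essentially the same approach as the paper: integration by parts for \eqref{integrationparts}, a direct computation (chain rule under rescaling) for the identity in \eqref{Hprimepos}, Remark \ref{signedremark} for the sign, and then division by $H(r,w)$ for \eqref{Hprime}. The paper's proof is simply a terser version of what you wrote.
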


\begin{proof}
	Integrating by parts the quantity $\int_{B_1} \abs{\nabla w_r}^2$ we immediately obtain \eqref{integrationparts}. A direct computation yields the identity in \eqref{Hprimepos} while \eqref{integrationparts} together with Remark \ref{signedremark} establishes the non-negativity of $\frac{d}{dr}H(r,w)$. Finally, \eqref{Hprime} follows from both the identity in \eqref{Hprimepos} and \eqref{integrationparts} after dividing through by $H(r,w)$.
\end{proof}

\begin{lemma}
	\label{AHlemama}
	For each $r>0$ we have 
	\begin{equation}\label{frequencydiff}
		\frac{d}{dr}(\phi(r,w)) \geq -2\frac{r^3}{H(r,w)}\int_{B_1\cap \{u_r =0\}}x_1^2.
	\end{equation}
\end{lemma}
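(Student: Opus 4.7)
I would execute the classical Almgren frequency differentiation, carefully tracking the obstacle contributions from $\Delta w = -\chi_{\{u=0\}}$ throughout. The crucial observation is that on $\{u=0\}$ one has $w = -p = -\tfrac12 x_1^2$ and $\Delta w = -1$, so $x\cdot\nabla w = -x_1^2$. Consequently both ``error integrals'' that will appear --- namely $\int_{B_r} w\Delta w\,dx$ (from $\tilde H'$) and $\int_{B_r}(x\cdot\nabla w)\Delta w\,dx$ (from $\tilde D'$) --- reduce to constant multiples of $\int_{B_r\cap\{u=0\}}x_1^2\,dx$, which is exactly the quantity on the right-hand side of the claim.

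Working with $\tilde H(r) = \int_{\partial B_r}w^2\,d\sigma$, $\tilde D(r) = \int_{B_r}|\nabla w|^2\,dx$, $A(r) = \int_{\partial B_r} w w_\nu\,d\sigma$ and $I(r) = \int_{B_r} w\Delta w\,dx$, Lemma \ref{dHlemama} combined with integration by parts yields $\tilde H'(r) = \frac{n-1}{r}\tilde H(r) + 2A(r)$ and $A(r) = \tilde D(r) + I(r)$, with $I(r)\ge 0$ by Remark \ref{signedremark}. Testing $\Delta w$ against $x\cdot\nabla w$ and using $x\cdot\nabla w = r w_\nu$ on $\partial B_r$ gives the Rellich--Pohozaev identity
\begin{equation*}
r\tilde D'(r) = (n-2)\tilde D(r) + 2r\int_{\partial B_r} w_\nu^2\,d\sigma - 2\int_{B_r}(x\cdot\nabla w)\Delta w\,dx,
\end{equation*}
in which the last integral equals $\int_{B_r\cap\{u=0\}}x_1^2\,dx = 2I(r)$.

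Direct differentiation of $\phi(r) = r\tilde D(r)/\tilde H(r)$ then yields
\begin{equation*}
\phi'(r)\,\tilde H(r) = 2r\int_{\partial B_r} w_\nu^2\,d\sigma - 4I(r) - 2\phi(r)\,A(r).
\end{equation*}
The Cauchy--Schwarz inequality $A(r)^2 \le \tilde H(r)\int_{\partial B_r} w_\nu^2\,d\sigma$, together with the identity $rA(r)/\tilde H(r) = \phi(r) + rI(r)/\tilde H(r)$, gives
\begin{equation*}
\phi'(r)\,\tilde H(r) \ge 2\phi(r)\,I(r) + \frac{2r\,I(r)^2}{\tilde H(r)} - 4I(r) \ge -4I(r),
\end{equation*}
using $\phi\ge 0$ and $I\ge 0$. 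Finally, the scaling identities $4I(r) = 2r^{n+2}\int_{B_1\cap\{u_r=0\}}x_1^2\,dx$ and $\tilde H(r) = r^{n-1}H(r,w)$ convert $-4I(r)/\tilde H(r)$ into the desired $-\,2r^3 H(r,w)^{-1}\int_{B_1\cap\{u_r=0\}}x_1^2\,dx$.

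The substantive step is the Pohozaev identity for the non-harmonic function $w$; the rest is algebraic bookkeeping. What makes everything close up cleanly is that the two obstacle corrections appearing in $\tilde H'$ and in $\tilde D'$ are both exactly multiples of $\int_{B_r\cap\{u=0\}}x_1^2\,dx$ --- a direct consequence of the explicit form of $p$ on the coincidence set --- and that Cauchy--Schwarz combined with $I(r)\ge 0$ absorbs precisely the terms that would otherwise destroy monotonicity.
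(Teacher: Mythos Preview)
Your proof is correct and follows essentially the same approach as the paper's: differentiate the Almgren quotient, apply Cauchy--Schwarz to the boundary terms, and discard the non-negative contributions coming from $w\Delta w\ge 0$ and $H'\ge 0$, leaving only the $\int_{B_r}(x\cdot\nabla w)\Delta w$ error which rescales to the stated right-hand side. The only cosmetic difference is that you work with the unrescaled quantities $\tilde H,\tilde D$ on $B_r$ and invoke the Rellich--Pohozaev identity explicitly, whereas the paper computes $\tfrac12 H^2\partial_r(D/H)$ directly in rescaled coordinates on $B_1$ using $x\cdot\nabla w_r = r\partial_r w_r$; the two computations are line-by-line equivalent.
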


\begin{proof}
	The proof is also a direct computation. Using the notation $\p_r=\frac{d}{dr}$,  we first observe that
	\begin{align*}
		\frac{1}{2}H(r,w)^2 \p_r\left(\frac{D(r,w)}{H(r,w)}\right)
		&=  \int_{B_1} \nabla w_r \cdot \nabla (\p_r w_r) \int_{\p B_1} w_r^2d\sigma -  \int_{B_1}\abs{ \nabla w_r}^2 \int_{\p B_1} w_r\p_rw_r d\sigma\\
		&= \int_{\p B_1} w_r^2d\sigma \left\{-\int_{B_1}\p_rw_r\Delta w_r + \int_{\p B_1}\p_rw_r (x\cdot \nabla w_r)d\sigma\right\} \\
		&\hspace{12mm}- \int_{\p B_1} w_r \p_rw_rd\sigma \left\{- \int_{B_1}w_r\Delta w_r + \int_{\p B_1} w_r (x\cdot \nabla w_r)d\sigma\right\}.
	\end{align*}
	Now since $x \cdot \nabla w_r = r\p_r w_r$ we have that
	\begin{align*}
		\frac{1}{2}H(r,w)^2 \p_r\left(\frac{D(r,w)}{H(r,w)}\right)
		&=  r\left\{\int_{\p B_1} w_r^2d\sigma\int_{\p B_1}(\p_rw_r)^2d\sigma  - \left( \int_{\p B_1} w_r \p_rw_r d\sigma\right)^2 \right\} \\
		&\hspace{12mm} + \int_{B_1}w_r\Delta w_r \int_{\p B_1} w_r \p_rw_rd\sigma - \int_{\p B_1} w_r^2d\sigma \int_{B_1} \p_r w_r \Delta w_r.
	\end{align*}
	The bracket on the right is non-negative due to the Cauchy-Schwarz inequality while the second term is non-negative thanks to \eqref{Hprimepos} and Remark \ref{signedremark}. Finally, since $\Delta w_r =- r^2\chi_{\{u_r=0\}}$,  we have that $\p_rw_r\Delta w_r = r^3x_1^2 \chi_{\{u_r =0\}}$. Thanks to these considerations we find that
	\begin{align*}
	\frac{1}{2}H(r,w)^2 \p_r\left(\frac{D(r,w)}{H(r,w)}\right)
		&\geq  -  r^3H(r,w) \int_{B_1\cap \{u_r =0\}}x_1^2
	\end{align*}
	which establishes \ref{frequencydiff}.
\end{proof}

We can now give the almost monotonicity of the Almgren frequency. We recall that given $\eps >0 $ and $\delta > 0$ the quantities $r_1 (\eps)$ and $r_2 (\delta)$ are obtained from Proposition \ref{frequencyconvergesprop} and Proposition \ref{Cdelta} respectively. From this point onwards we will use the fact that $n=2$.

\begin{lemma}\label{almostmono}
	Let $\eps = \delta = \frac{1}{16}$ and define $r_0=\max\{r_1(\eps),r_2(\delta)\}$. There exists a constant $C=C(r_0)$ so that
	\begin{equation*}
		\frac{d}{dr}(\phi(r,w) - C r^{-\frac{1}{4}})\geq 0
	\end{equation*}
	for $r\geq r_0$. 
\end{lemma}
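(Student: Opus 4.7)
The plan is to combine Lemma \ref{AHlemama} with the coincidence set and frequency estimates from Propositions \ref{Cdelta} and \ref{frequencyconvergesprop}. Since $\frac{d}{dr}(Cr^{-1/4}) = -\frac{C}{4} r^{-5/4}$, it suffices to prove
$$\phi'(r,w) \geq -C(r_0)\, r^{-5/4} \quad \text{for } r \geq r_0,$$
which by \eqref{frequencydiff} reduces to the bound $\frac{2r^3}{H(r,w)}\int_{B_1 \cap \{u_r=0\}} x_1^2 \, dx \leq C(r_0)\, r^{-5/4}$.

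For the numerator, I would change variables $y = rx$ to get $\int_{B_1 \cap \{u_r = 0\}} x_1^2 \, dx = r^{-4} \int_{B_r \cap \mathcal{C}} y_1^2 \, dy$ and apply Proposition \ref{Cdelta} with $\delta = 1/16$. The contribution from $\{y_2 \leq r_2(\delta)\}$ is bounded by a constant depending only on $r_0$ thanks to \eqref{coincidencesetbounded}, while for $y_2 \in [r_2(\delta), r]$ the inclusion \eqref{deltagrowth} gives $|y_1| \leq y_2^{(1+\delta)/2}$ inside $\mathcal{C}$, so
$$\int_{r_2(\delta)}^r \int_{-y_2^{(1+\delta)/2}}^{y_2^{(1+\delta)/2}} y_1^2 \, dy_1 \, dy_2 \leq C\, r^{5/2 + 3\delta/2}.$$
Altogether this yields $\int_{B_1 \cap \{u_r = 0\}} x_1^2 \, dx \leq C(r_0)\, r^{-3/2 + 3\delta/2}$.

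For the denominator, I would invoke \eqref{Hprime} together with $w \Delta w \geq 0$ (Remark \ref{signedremark}) and the frequency lower bound $\phi(r,w) \geq 3/2 - \varepsilon$ for $r \geq r_1(\varepsilon)$ (Proposition \ref{frequencyconvergesprop}, with $\varepsilon = 1/16$), yielding $\frac{d}{dr} \log H(r,w) \geq \frac{2(3/2 - \varepsilon)}{r}$. Integrating from $r_0$ to $r$ and using $H(r_0,w) > 0$ gives $H(r,w) \geq C(r_0)\, r^{3 - 2\varepsilon}$. Combining both bounds,
$$\frac{2r^3}{H(r,w)} \int_{B_1 \cap \{u_r = 0\}} x_1^2 \, dx \leq C(r_0)\, r^{-3/2 + 2\varepsilon + 3\delta/2},$$
and with $\varepsilon = \delta = 1/16$ the exponent is $-41/32 \leq -5/4$, so the expression is dominated by $C(r_0)\, r^{-5/4}$ for $r \geq r_0 \geq 1$, which gives the claim after rescaling the constant.

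The main obstacle is numerological: three error terms must combine so that the exponent $-3/2 + 2\varepsilon + 3\delta/2$ stays below $-5/4$, which forces $3\delta/2 + 2\varepsilon \leq 1/4$; the symmetric choice $\varepsilon = \delta = 1/16$ is essentially the simplest clean value. The constant $C(r_0)$ accumulates through both the initial value $H(r_0,w)$ used to integrate \eqref{Hprime} and the bounded contribution from $\mathcal{C} \cap \{y_2 \leq r_2(\delta)\}$, which is why $r_0$ must be taken as $\max\{r_1(\varepsilon), r_2(\delta)\}$ before $C$ can be fixed.
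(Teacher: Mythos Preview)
Your proposal is correct and follows essentially the same approach as the paper: both bound $H(r,w)$ from below via \eqref{Hprime} and the frequency lower bound, bound $\int_{B_1\cap\{u_r=0\}}x_1^2$ from above via Proposition~\ref{Cdelta}, and then feed these into \eqref{frequencydiff} to obtain $\phi'(r,w)\geq -C r^{-3/2+2\eps+3\delta/2}\geq -C r^{-5/4}$. The only cosmetic difference is that you estimate the coincidence-set integral after changing variables back to $y=rx$, whereas the paper carries out the same computation directly in the rescaled $x$-coordinates; the resulting exponents agree.
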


\begin{proof} The result will follow from Lemma \ref{AHlemama} after estimating $H(r,w)$ from below and $\int_{B_1\cap \{u_r =0\}}x_1^2$ from above. \\
	\textit{Step 1: Control of $H(r,w)$ from below:}\\
	Since $w\Delta w \geq 0$ (c.f. Remark \ref{signedremark}) we find using \eqref{Hprime} and \eqref{frequencyconverges} that
	\begin{equation*}
		\frac{d}{dr}(\log(H(r,w)) \geq \frac{2}{r} \left( \frac{3}{2} - \eps \right)
	\end{equation*}
	for $r \geq r\naught$. Integrating this identity from $r\naught$ to $r$ we find
	\begin{equation*}
		\int_{r\naught}^r 	\frac{d}{d\rho}(\log(H(\rho,w))) d\rho \geq \int_{r\naught}^r \frac{2}{\rho}\left(\frac{3}{2} - \eps \right) d\rho
	\end{equation*}
	so that
	\begin{equation}\label{Hlowerbound}
		H(r,w) \geq C r^{3-2\eps}.
	\end{equation}
	\textit{Step 2: Controlling $\int_{B_1\cap \{u_r =0\}}x_1^2$ from above:}\\
	By Proposition \ref{Cdelta} we have
	\begin{align}
		\int_{B_1\cap \{u_r =0\}}x_1^2
		&\leq  \int_{ \{-r\naught^{\frac{1+\delta}{2}}r^{-1} \leq x_1 \leq r\naught^{\frac{1+\delta}{2}}r^{-1} \} \cap \{0 \leq x_2 \leq \frac{r\naught}{r}\}} x_1^2 dx \nonumber \\
		&\hspace{25mm}+\int_{B_1 \cap \{\abs{x_1}^2 \leq r^{\delta-1}x_2^{1+\delta}\} \cap \{\frac{r\naught}{r} \leq x_2 \leq 1\}} x_1^2 dx \nonumber \\
		&\leq Cr^{-4} + \int_{\frac{r\naught}{r}}^1 \int_{- r^{\frac{\delta-1}{2}}x_2^{\frac{1+\delta}{2}}}^{ r^{\frac{\delta-1}{2}}x_2^{\frac{1+\delta}{2}}} x_1^2 dx_1dx_2\nonumber \\
		&\leq C r^{\frac32 (\delta -1)} \nonumber.
	\end{align}
	\textit{Step 3: Controlling $\phi(r,w)$ from below:}\\
	By \eqref{frequencydiff}, Step 1, and Step 2, we have that
	\begin{align*}
		\frac{d}{dr}(\phi(r,w))
		&\geq -C r^{\frac32 (\delta -1) + 2\eps},
	\end{align*}
	and since $\eps = \delta = \frac1{16}$ we have that $\frac32\delta + 2\eps \leq \frac{1}{4}$ which concludes the proof. 
\end{proof}

The main result of this section is the following sharp growth estimate on $w=u-p$. 
\begin{proposition}\label{sharpgrowthestimates}
	Let $\eps, \delta$ and $r\naught$ be as in Lemma \ref{almostmono}. There exists a constant $C=C(r_0) $ such that
	\begin{equation}
		H(r,w) \leq C r^{3}
	\end{equation}
	for $r\geq r_0$. 
\end{proposition}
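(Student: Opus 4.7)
The plan is to combine the almost monotonicity of the (corrected) Almgren frequency with the logarithmic derivative formula for $H$ obtained in Lemma \ref{dHlemama}. The key observation is that Lemma \ref{almostmono} says $\phi(r,w)-Cr^{-1/4}$ is non-decreasing for $r\geq r_0$, while Proposition \ref{frequencyconvergesprop} identifies the limit at infinity as $3/2$. Letting $r\to\infty$ in the monotone quantity yields the one-sided bound
\[
\phi(r,w) \leq \tfrac{3}{2} + C r^{-1/4} \qquad \text{for all } r \geq r_0,
\]
which is the crucial upper estimate I will feed into the formula for $\tfrac{d}{dr}\log H$.

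Next I would use identity \eqref{Hprime} from Lemma \ref{dHlemama}, namely
\[
\frac{d}{dr}\log H(r,w) = \frac{2}{r}\left(\phi(r,w) + \frac{\int_{B_1} w_r \Delta w_r}{H(r,w)}\right).
\]
I need an upper bound on the error term $\int_{B_1}w_r\Delta w_r / H(r,w)$. Since $w_r(x)=-\tfrac{r^2}{2}x_1^2$ on $\{u_r=0\}$ and $\Delta w_r = -r^2\chi_{\{u_r=0\}}$, we have
\[
\int_{B_1} w_r \Delta w_r = \frac{r^4}{2}\int_{B_1\cap\{u_r=0\}} x_1^2\,dx .
\]
Step 2 of Lemma \ref{almostmono} already gives $\int_{B_1\cap\{u_r=0\}}x_1^2\,dx \leq C r^{3(\delta-1)/2}$, and Step 1 of the same lemma gives $H(r,w)\geq c\, r^{3-2\eps}$. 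With $\eps=\delta=1/16$ these combine to
\[
\frac{\int_{B_1} w_r \Delta w_r}{H(r,w)} \leq C r^{4 + 3(\delta-1)/2 - 3 + 2\eps} \leq C r^{-\gamma}
\]
for some $\gamma>0$ (explicitly $\gamma = 5/32$).

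Putting these two bounds together,
\[
\frac{d}{dr}\log H(r,w) \leq \frac{3}{r} + C r^{-1-1/4} + C r^{-1-\gamma},
\]
and the last two terms are integrable on $[r_0,\infty)$. Integrating from $r_0$ to $r$ yields $\log H(r,w) \leq 3\log r + C$, that is, $H(r,w)\leq Cr^3$, as desired.

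The main conceptual step is the passage from almost monotonicity plus a limit value to a pointwise upper bound on $\phi$; everything else is then a routine integration once the coincidence-set bounds of Proposition \ref{Cdelta} and the lower bound on $H$ from Lemma \ref{almostmono} are in hand. The only mild subtlety is keeping track of the exponents so that the error term $\int w_r\Delta w_r/H$ has a strictly negative power of $r$, which is precisely why $\eps$ and $\delta$ were fixed as small as $1/16$ in Lemma \ref{almostmono}.
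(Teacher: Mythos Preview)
Your proof is correct and follows essentially the same approach as the paper: use the almost monotonicity from Lemma \ref{almostmono} together with $\lim_{r\to\infty}\phi(r,w)=\tfrac{3}{2}$ to get $\phi(r,w)\leq \tfrac{3}{2}+Cr^{-1/4}$, bound the error term $\int_{B_1}w_r\Delta w_r/H(r,w)$ via Steps 1 and 2 of Lemma \ref{almostmono}, feed both into \eqref{Hprime}, and integrate. One minor arithmetic slip: with $\eps=\delta=1/16$ the exponent $1+\tfrac{3}{2}(\delta-1)+2\eps$ equals $-9/32$, so $\gamma=9/32$ rather than $5/32$; this does not affect the argument since all you need is $\gamma>0$.
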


\begin{proof}
	We first observe that using \eqref{Hlowerbound} we obtain (as in Step 2 of the previous proof) that
	\begin{align*}
		\frac1r\frac{\int_{B_1} w_r\Delta w_r}{H(r,w)}
		\leq C r^{2\eps} \int_{B_1 \cap \{u_r =0\}} x_1^2 
		\leq C r^{-\frac54}
	\end{align*}
	for $r \geq r_0$. Using now \eqref{Hprime} and Lemma \ref{almostmono} we find 
	\begin{align*}
		\frac{d}{dr}(\log(H(r,w))) 
		&\leq \frac{2}{r} \left(\phi(r,w) - Cr^{-\frac{1}{4}} \right) + Cr^{-\frac{5}{4}} \\
		&\leq \frac{3}{r} + Cr^{-\frac{5}{4}}
	\end{align*}
	where in the last step we used that $\lim_{r\to\infty} \phi(r,w) = \frac{3}{2}$. Integrating this we obtain the result. 
\end{proof}

We have the following immediate consequence. 
\begin{corollary}\label{sharpcor}
	There exists a constant $C=C(r_0)$ such that for $r \geq r_0$ we have
	\begin{equation}\label{L2growth}
		\left(\strokedint_{B_r} w^2\right)^{\frac{1}{2}} \leq C r^{\frac{3}{2}}.
	\end{equation}
\end{corollary}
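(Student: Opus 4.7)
The plan is to convert the boundary $L^2$ growth estimate $H(r,w) \le C r^3$ established in Proposition \ref{sharpgrowthestimates} into the solid average estimate claimed here by a straightforward integration in polar coordinates, followed by the appropriate normalization by $|B_r|$.

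First, since $n=2$, unpacking the definition $H(r,w) = r^{1-n}\int_{\partial B_r} w^2\,d\sigma$ gives
\begin{equation*}
\int_{\partial B_r} w^2\, d\sigma = r\, H(r,w) \le C r^4
\end{equation*}
for all $r \ge r_0$. Next, writing the solid integral as an integral of spherical integrals,
\begin{equation*}
\int_{B_r} w^2\, dx = \int_0^r \int_{\partial B_\rho} w^2\, d\sigma\, d\rho,
\end{equation*}
I split the range of $\rho$ at $r_0$. On $[0,r_0]$, the function $w=u-p$ is continuous (in fact $C^{1,1}$ by Lemma \ref{lemma:basic:properties}, since $p$ is a quadratic polynomial), so $\int_0^{r_0} \int_{\partial B_\rho} w^2\, d\sigma\, d\rho$ is bounded by a constant depending only on $r_0$. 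On $[r_0, r]$, I apply the bound $\int_{\partial B_\rho} w^2\, d\sigma \le C\rho^4$ and integrate to obtain
\begin{equation*}
\int_{r_0}^r \int_{\partial B_\rho} w^2\, d\sigma\, d\rho \le \int_{r_0}^r C\rho^4\, d\rho \le C r^5.
\end{equation*}
Combining, $\int_{B_r} w^2 \le C(r_0) + C r^5 \le C r^5$ once $r \ge r_0$ (possibly enlarging the constant).

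Finally, since $|B_r| = \pi r^2$ in two dimensions, I divide to obtain
\begin{equation*}
\strokedint_{B_r} w^2\, dx = \frac{1}{\pi r^2}\int_{B_r} w^2\, dx \le C r^3,
\end{equation*}
and taking square roots yields the estimate \eqref{L2growth}. There is no real obstacle here; the content of the corollary is entirely carried by Proposition \ref{sharpgrowthestimates}, and the remaining work is just passing from the spherical $L^2$ bound to the mean value on solid balls.
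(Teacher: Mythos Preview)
Your proposal is correct and follows essentially the same argument as the paper: write the solid integral in polar coordinates, split at $r_0$, use Proposition \ref{sharpgrowthestimates} on $[r_0,r]$ and local boundedness on $[0,r_0]$, then divide by $|B_r|$. The paper's computation is identical (it writes $\rho^{n-1+3}$ where you write $\rho^4$).
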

\begin{proof}
	We observe by integrating that
	\begin{align*}
		\strokedint_{B_r} w^2 
		&= \frac{1}{\abs{B_r}} \int_0^r \int_{\p B_{\rho}} w^2 d\sigma d\rho \\
		&= \frac{1}{\abs{B_r}} \int_0^{r_0} \int_{\p B_{\rho}} w^2 d\sigma d\rho + \frac{1}{\abs{B_r}} \int_{r_0}^r \int_{\p B_{\rho}} w^2 d\sigma d\rho \\ 
		&\leq \frac{1}{\abs{B_r}} \int_{B_{r_0}}w^2  + \frac{C}{\abs{B_r}} \int_{r_0}^r \rho^{n-1+3} d\rho \\ 
		&\leq Cr^{3}
	\end{align*}
	which establishes \eqref{L2growth}.
\end{proof}

\section{Refined blow-down and matching}\label{section:refined:blowdown}
In this section we use the sharp growth estimates obtained for a solution $u$ of \eqref{obstacleproblem} to show that the blow-down sequence, $r^{-\frac{3}{2}}(u-p)(r\cdot)$, has a limit as $ r\to \infty$. We will then find a paraboloid solution such that its blow-down limit coincides with that of $u$. As before we set $w= u-p$ and we recall that $\hat{v}_{3/2}$ is defined in \eqref{wsin} and is the $\frac{3}{2}$-homogeneous solution to the thin obstacle problem 

\begin{proposition}\label{refinedblowdown}
	Given a sequence $(r_k)_{k\in \N}$ such that $r_k \to \infty$ there exists a subsequence and some constant (that may depend on the sequence) $\alpha_u > 0$ such that 
	\begin{equation}\label{perciseblowdown}
		\frac{w(r_kx)}{r_k^{\frac{3}{2}}} \to \alpha_u v \text{\hspace{1mm} in $W^{1,2}_{\loc}\cap C^0_{\loc}(\R^2)$. }
	\end{equation}
\end{proposition}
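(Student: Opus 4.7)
The plan is to decompose
\begin{equation*}
\frac{w(r_k x)}{r_k^{3/2}} = \alpha(r_k)\,\tilde{w}_{r_k}(x),\qquad \alpha(r):=\frac{\norm{w_r}_{L^2(\p B_1)}}{r^{3/2}},
\end{equation*}
and extract a subsequence along which both factors converge. By Corollary~\ref{sharpcor} we have $\alpha(r)\leq C$ for $r\geq r_0$, so up to a subsequence (not relabelled) $\alpha(r_k)\to \alpha_u\in[0,C]$. Corollary~\ref{corollary:blowdown:homogeneous} then yields a further subsequence with $\tilde{w}_{r_k}\to \hat{v}_{3/2}$ in $W^{1,2}_{\loc}\cap C^0_{\loc}(\R^2)$; since this family is uniformly bounded in that topology, multiplying the two convergences gives $w(r_kx)/r_k^{3/2}\to \alpha_u \hat{v}_{3/2}$, which is precisely \eqref{perciseblowdown}.

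The substantial work is the strict positivity $\alpha_u>0$, equivalently the lower bound $H(r,w)\geq cr^3$ for large $r$. The starting point is the identity
\begin{equation*}
\frac{d}{dr}\log\!\left(\frac{H(r,w)}{r^{3}}\right) = \frac{2}{r}\big(\phi(r,w)-\tfrac{3}{2}\big) + \frac{2}{r}\cdot\frac{\int_{B_1}w_r\Delta w_r}{H(r,w)},
\end{equation*}
coming from Lemma~\ref{dHlemama}. Lemma~\ref{almostmono} gives the upper bound $\phi(r,w)\leq \tfrac{3}{2}+Cr^{-1/4}$, while the coincidence set estimate of Proposition~\ref{Cdelta} controls the last term by $Cr^{-5/4}$, exactly as in the proof of Proposition~\ref{sharpgrowthestimates}. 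Together these show that $\log(H(r,w)/r^3)+4Cr^{-1/4}$ is non-increasing for $r\geq r_0$, so $H(r,w)/r^3\to L\in[0,\infty)$, and $\alpha_u=\sqrt{L}$ is in fact independent of the subsequence.

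The main obstacle I expect is ruling out $L=0$. My plan is a contradiction argument: if $L=0$ then $w(r_kx)/r_k^{3/2}\to 0$ uniformly on compact sets. Since $\mathcal{C}$ is a convex set with non-empty interior and is not contained in the $x_2$-axis (otherwise it would have empty interior), there exists an interior point $x^*\in\mathcal{C}$ with $x^*_1\neq 0$, and hence $w(x^*)=-(x^*_1)^2/2<0$. I would propagate this non-triviality to arbitrarily large scales using the concavity $\p_{11}w\leq 0$ together with the monotonicity $\p_2 u\leq 0$ inherited from the $x_2$-monotonicity hypothesis, thereby producing, for each $r_k$, points $y_k\in B_{r_k}$ at which $|w(y_k)|\geq c\,r_k^{3/2}$, in contradiction with $w(r_kx)/r_k^{3/2}\to 0$ in $C^0_{\loc}$. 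Once $L>0$ is secured, the assembly of the limit described above completes the proof.
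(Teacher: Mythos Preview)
Your convergence argument is correct and is essentially the paper's own proof, just repackaged. The paper works directly with $\hat w_r(x):=r^{-3/2}w(rx)$: it bounds $\|\hat w_r\|_{W^{1,2}(B_R)}$ via Lemma~\ref{differenceenergyinequality} and Corollary~\ref{sharpcor}, extracts a thin-obstacle limit $v_0$, and then uses $\phi(r,w)\to\tfrac32$ (Proposition~\ref{frequencyconvergesprop}) to identify $v_0$ as a multiple of $\hat v_{3/2}$, setting $\alpha_u:=\lim_k\|\hat w_{r_k}\|_{L^2(\partial B_1)}$. Your factorisation $\hat w_{r_k}=\alpha(r_k)\tilde w_{r_k}$ together with Corollary~\ref{corollary:blowdown:homogeneous} is exactly the same content. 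You in fact go a bit further than the paper here: your almost-monotonicity computation for $\log(H/r^3)+4Cr^{-1/4}$ yields the existence of the full limit $L=\lim_{r\to\infty}H(r,w)/r^3$, whereas the paper only takes subsequential limits in this proposition and defers uniqueness to Proposition~\ref{uniqueness}, where a different monotone quantity (Lemma~\ref{monotonicitysigniorini}) is used.

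Your argument for $\alpha_u>0$, however, has a real gap. From an interior point $x^*\in\mathcal C$ with $x_1^*\neq 0$, the monotonicity $\partial_2 u\le 0$ does place the whole ray $\{(x_1^*,t):t\ge x_2^*\}$ inside $\mathcal C$, and then $\partial_{11}w\le 0$ propagates value and slope to give
\[
w(x_1,t)\le -x_1^*\,x_1+\tfrac12(x_1^*)^2\qquad (x_1\ge x_1^*,\ t\ge x_2^*).
\]
But this is only a \emph{linear} lower bound $|w(y)|\gtrsim|y|$; after dividing by $r_k^{3/2}$ it tends to $0$ and no contradiction with $L=0$ arises. Obtaining $|w|\gtrsim|y|^{3/2}$ by these tools would require a lower bound on the width of $\mathcal C$ of order $y_2^{1/2}$, which is precisely what is not yet known. (Note that the paper's proof of Proposition~\ref{refinedblowdown} does not address $\alpha_u>0$ either; a clean way to obtain it is through the monotone quantity of Lemma~\ref{monotonicitysigniorini}, whose derivative $-r^{-1/2}\int_{B_1}\hat v_{3/2}\chi_{\{u_r=0\}}$ is strictly positive because $\hat v_{3/2}<0$ on $\{x_2>0\}\setminus\{x_1=0\}$ and $\mathcal C$ has non-empty interior there.)
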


\begin{proof}
	The proof of \eqref{perciseblowdown} follows the same lines as that of Proposition \ref{blowdownanalysis}, however, in this case the uniform $W^{1,2}$ bounds follow from the sharp growth estimates on $u-p$. Precisely, since $p$ is itself a global solution to the obstacle problem we can apply Lemma \ref{differenceenergyinequality} and then \eqref{L2growth} to find for each $R>1$ and each $r>0 $ that, setting $\hat{w}_r(x):= r^{-3/2}w(rx)$,
	\begin{align*}
		 \int_{B_R} \abs{\nabla \hat{w}_r}^2 \leq \frac{C}{R^2}\int_{B_{2R}} \left(r^{-3/2}w(r\cdot)\right)^2 =  \frac{C}{r^3R^2}\int_{B_{2Rr}} w^2 \leq C R.
	\end{align*}
	Moreover, from \eqref{L2growth} we have for each $R>1$ that
	\begin{equation}\label{wr:refined:L2bound}
		\int_{B_R} \hat{w}_r^2 \leq CR^3 \text{\hspace{2mm} for all } r > 0,
	\end{equation}
	so that 
	\begin{equation*}
		\norm{\hat{w}_r}_{W^{1,2}(B_R)} \leq C(R) \text{\hspace{2mm} for all } r > 0. 
	\end{equation*}
	Arguing then as in the proof of Proposition \ref{blowdownanalysis} we find, up to a subsequence, that $\hat{w}_{r_k} \to v_0$ in $W^{1,2}_{\loc}\cap C^0_{\loc}(\R^2)$, where $v_0$ is a global solution to the thin obstacle problem \eqref{thinobstacleglobal}, that may depend on the sequence chosen. However, since $\lim_{r\to\infty} \phi(r,w) = \frac{3}{2}$ by Proposition \ref{frequencyconvergesprop}, we can argue as in the derivation of Corollary \ref{corollary:blowdown:homogeneous} to find that $\hat{w}_{r_k} \to \alpha_u \hat{v}_{3/2}$ in $W^{1,2}_{\loc}\cap C^0_{\loc}(\R^2)$ where $\alpha_u := \lim_{k\to\infty} \norm{\hat{w}_{r_k}}_{L^2(\p B_1)}$.
\end{proof}

In order to complete the matching we must show uniqueness of the blow-down limits obtained in Proposition \ref{refinedblowdown}. This will be achieved using the following monotonicity formula which appears in \cite[Lemma 5.3 and Lemma B.1]{figalli2020generic} for odd integer homogeneous solutions to the thin obstacle problem. 

\begin{lemma}\label{monotonicitysigniorini}
	For each $r>0$ we have that
	\begin{equation*}
		\frac{d}{dr} \left(\frac{1}{r^{\frac{3}{2}}} \int_{\p B_1}  \hat{v}_{3/2} w_r d\sigma\right) \geq 0.
	\end{equation*}
\end{lemma}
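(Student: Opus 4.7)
The plan is to reduce the derivative of this quantity to a Wronskian-type boundary integral via a scaling identity and Green's formula, and then exploit the sign information on $w$, $\hat{v}_{3/2}$ and their Laplacians collected in the previous sections. First I would move the integral from $\p B_1$ to $\p B_r$ using the $3/2$-homogeneity of $\hat{v}_{3/2}$: a change of variables $y = rx$ gives
\[
\frac{1}{r^{3/2}}\int_{\p B_1} \hat{v}_{3/2}\, w_r\, d\sigma = \frac{1}{r^{4}}\int_{\p B_r}\hat{v}_{3/2}\, w\, d\sigma.
\]
Differentiating in $r$ via the identity $\frac{d}{dr}\int_{\p B_r} f\,d\sigma = \frac{1}{r}\int_{\p B_r} f\,d\sigma + \int_{\p B_r} f_\nu\,d\sigma$ (valid in dimension $n=2$), and applying the Euler identity $(\hat{v}_{3/2})_\nu = \frac{3}{2r}\hat{v}_{3/2}$ on $\p B_r$, one arrives after cancellation at
\[
\frac{d}{dr}\left(\frac{1}{r^{3/2}}\int_{\p B_1}\hat{v}_{3/2}\, w_r\,d\sigma\right) = \frac{1}{r^{4}}\int_{\p B_r}\bigl(\hat{v}_{3/2}\, w_\nu - (\hat{v}_{3/2})_\nu\, w\bigr)\,d\sigma.
\]

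Next I would apply Green's second identity (in the weak sense, since $\Delta \hat{v}_{3/2}$ is only a non-positive Radon measure) to rewrite the right-hand side as
\[
\frac{1}{r^{4}}\left(\int_{B_r}\hat{v}_{3/2}\,\Delta w\, dx \;-\; \int_{B_r} w\, d(\Delta \hat{v}_{3/2})\right),
\]
and then observe that both contributions are non-negative. The first equals $-\frac{1}{r^{4}}\int_{B_r\cap \mathcal{C}}\hat{v}_{3/2}\,dx$ because $\Delta w = -\chi_{\mathcal{C}}$; since $\mathcal{C}\subset\{x_2\geq 0\}$ by Definition \ref{x2mondef}(iii) and $\hat{v}_{3/2}\leq 0$ on $\{x_2\geq 0\}$ by \eqref{nonpositive:blowdown2}, the integrand has the correct sign. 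The second is non-negative because $\Delta \hat{v}_{3/2}$ is a non-positive measure supported on $\{x_1=0\}\cap\{x_2\geq 0\}$ by \eqref{supportmeasure2}--\eqref{nonpositivemeasure2}, whereas $w = u - p = u \geq 0$ on $\{x_1=0\}$.

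The main technical point is to justify Green's identity given that $\hat{v}_{3/2}$ is only $C^{1,1/2}$ with a singular Laplacian. I would handle this via the $W^{1,2}$-pairing: for a sequence of smooth cut-offs $\eta_j \in C^{\infty}_c(B_r)$ with $\eta_j \nearrow \chi_{B_r}$, the identity $\int w\eta_j\,d(\Delta \hat{v}_{3/2}) = -\int \nabla \hat{v}_{3/2}\cdot\nabla(w\eta_j)\,dx$ follows directly from $\hat{v}_{3/2}\in W^{1,2}_{\loc}(\R^2)$ and $w\in C^{1,1}$; letting $j\to\infty$ (with $\nabla\eta_j$ concentrating on $\p B_r$) produces the boundary term $\int_{\p B_r}w(\hat{v}_{3/2})_\nu\,d\sigma$. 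Combined with the classical Green identity for $\int_{B_r}\hat{v}_{3/2}\,\Delta w\,dx$ (straightforward since $w\in C^{1,1}$ and $\Delta w\in L^\infty$), this yields the required formula and closes the argument.
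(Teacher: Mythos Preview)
Your proof is correct and follows essentially the same route as the paper: differentiate, use the $3/2$-homogeneity of $\hat v_{3/2}$ together with Green's identity to reduce the derivative to the two bulk terms $\int \hat v_{3/2}\,\Delta w$ and $-\int w\,d(\Delta\hat v_{3/2})$, and then check their signs. The only cosmetic differences are that the paper carries out the computation on $B_1$ with $w_r$ rather than on $B_r$ with $w$, and that it notes the sharper fact $w_r\equiv 0$ on $\operatorname{supp}(\Delta\hat v_{3/2})=\{x_1=0,\,x_2\ge0\}$ (this half-line lies in $\mathcal C$ by $\partial_2 u\le 0$ and $0\in\mathcal C$), so the second term vanishes identically rather than merely being nonnegative; your weaker observation $w=u\ge0$ on $\{x_1=0\}$ is of course sufficient.
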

\begin{proof}
	We first observe that by integrating by parts twice $\int_{B_1} w_r \Delta v$, we have
	\begin{equation*}
		\int_{\p B_1} \hat{v}_{3/2}\p_{\nu} w_r d\sigma = - \int_{B_1} w_r \Delta \hat{v}_{3/2} + \int_{\p B_1}w_r \p_{\nu} \hat{v}_{3/2} d\sigma + \int_{B_1} \hat{v}_{3/2}\Delta w_r.
	\end{equation*}
	Now since $\hat{v}_{3/2}$ is $\frac32$-homogeneous we have that $\p_{\nu} \hat{v}_{3/2} = \frac{3}{2}\hat{v}_{3/2}$ on $\p B_1$. Moreover, since $\hat{v}_{3/2} \leq 0$ for $x_2\geq 0$ we have that $\hat{v}_{3/2}\chi_{\{u_r=0\}}\leq 0$ for every $r>0$. Finally, $w_r\Delta \hat{v}_{3/2} = 0$ since $w_r = 0$ on $\supp(\Delta \hat{v}_{3/2}) = \{x_1=0, x_2 \geq 0\}$. Altogether we obtain
	\begin{align*}
		\frac{d}{dr} \left(\frac{1}{r^{\frac{3}{2}}} \int_{\p B_1}  \hat{v}_{3/2} w_rd\sigma \right) 
		&= -\frac{3}{2}r^{-\frac52}\int_{\p B_1} \hat{v}_{3/2}w_r d\sigma\\& \quad +r^{-\frac{5}{2}} \left(\frac32\int_{\p B_1} \hat{v}_{3/2}w_r d\sigma- \int_{ B_1}w_r\Delta \hat{v}_{3/2} + \int_{B_1} \hat{v}_{3/2}\Delta w_r\right) \\
		&= -r^{-\frac12}  \int_{B_1} \hat{v}_{3/2}\chi_{\{u_r=0\}} \geq 0.
	\end{align*}
\end{proof}

\begin{proposition}\label{uniqueness}
	There exists a unique constant $\alpha_u$ such that
	\begin{equation*}
		\lim_{r\to\infty} \frac{w(rx)}{r^{\frac{3}{2}}} \to \alpha_u \hat{v}_{3/2} \text{\hspace{2mm} in } W^{1,2}_{\loc} \cap C^0_{\loc}(\R^2).
	\end{equation*}
\end{proposition}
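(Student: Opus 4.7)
The strategy is to extract a single numerical invariant $L$ from the monotonicity formula of Lemma \ref{monotonicitysigniorini}, and then identify it with the constant $\alpha_u$ appearing in any subsequential blow-down limit obtained in Proposition \ref{refinedblowdown}. Set $\hat{w}_r(x)=r^{-3/2}w(rx)$, so that $r^{-3/2} w_r = \hat{w}_r$. Lemma \ref{monotonicitysigniorini} can then be rephrased as the statement that
\begin{equation*}
I(r):=\int_{\p B_1}\hat{v}_{3/2}\,\hat{w}_r\,d\sigma
\end{equation*}
is non-decreasing in $r$. The first step is to verify that $I(r)$ is bounded above. Proposition \ref{sharpgrowthestimates} gives $H(r,w)\le Cr^{3}$, i.e.\ $\|\hat{w}_r\|_{L^2(\p B_1)}\le C$ uniformly for $r\ge r_0$, so the Cauchy--Schwarz inequality combined with $\|\hat{v}_{3/2}\|_{L^2(\p B_1)}=1$ yields $I(r)\le C$. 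Being monotone and bounded, $I(r)$ has a finite limit $L:=\lim_{r\to\infty}I(r)$.

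Next I would identify $L$ with the constant $\alpha_u$ produced by any blow-down sequence. Given any sequence $r_k\to\infty$, Proposition \ref{refinedblowdown} provides a subsequence, not relabelled, along which $\hat{w}_{r_k}\to\alpha_u\hat{v}_{3/2}$ in $W^{1,2}_{\loc}\cap C^0_{\loc}(\R^2)$, for some $\alpha_u\ge 0$ depending a priori on the sequence. Since $\p B_1$ is compact, the $C^0_{\loc}$ convergence forces $\hat{w}_{r_k}\to\alpha_u\hat{v}_{3/2}$ uniformly on $\p B_1$, hence in $L^2(\p B_1)$. Passing to the limit in the definition of $I$ along this subsequence gives
\begin{equation*}
L=\lim_{k\to\infty}I(r_k)=\alpha_u\int_{\p B_1}\hat{v}_{3/2}^{\,2}\,d\sigma=\alpha_u,
\end{equation*}
using $\|\hat{v}_{3/2}\|_{L^2(\p B_1)}=1$. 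Thus every subsequential blow-down limit is $L\hat{v}_{3/2}$, with $L$ independent of the chosen sequence.

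Finally, I would upgrade this subsequential statement to full convergence by the standard argument: if $\hat{w}_r$ did not converge to $L\hat{v}_{3/2}$ in $W^{1,2}_{\loc}\cap C^0_{\loc}(\R^2)$, one could extract a sequence $r_k\to\infty$ staying a fixed distance away from $L\hat{v}_{3/2}$ in some seminorm; applying Proposition \ref{refinedblowdown} to this sequence would produce a further subsequence converging to $\alpha_u\hat{v}_{3/2}$ with $\alpha_u=L$, a contradiction. Setting $\alpha_u:=L$ completes the proof. The only nontrivial ingredient is the availability of the monotonicity formula and the sharp growth bound; once these are in place, the argument is essentially a monotone-plus-bounded-plus-compactness package, so I do not expect a serious obstacle beyond checking that the $C^0_{\loc}$ convergence in Proposition \ref{refinedblowdown} is strong enough to pass to the limit inside $I(r)$, which it is.
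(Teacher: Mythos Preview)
Your proposal is correct and follows essentially the same approach as the paper: both arguments hinge on the monotonicity of $r\mapsto r^{-3/2}\int_{\p B_1}\hat v_{3/2}\,w_r\,d\sigma$ from Lemma \ref{monotonicitysigniorini}, its boundedness via the sharp growth estimate, and passage to the limit along subsequences from Proposition \ref{refinedblowdown}. The only cosmetic difference is that the paper phrases the uniqueness as a contradiction between two hypothetical limits $\alpha_u^{(1)}\neq\alpha_u^{(2)}$, whereas you extract the limit $L$ of $I(r)$ directly and identify every subsequential $\alpha_u$ with it; the content is the same.
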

\begin{proof}
	Suppose there exists two different accumulation points along sequences $r^{(i)}_k$ for $i=1,2$. That is, there exists constants $\alpha^{(i)}$ such that
	\begin{equation*}
		\alpha^{(i)} \hat{v}_{3/2} = \lim_{k\to \infty} \frac{w(r^{(i)}_kx)}{(r^{(i)}_k)^{\frac{3}{2}}}
	\end{equation*} 
	for $i=1,2$. Supposing that $\alpha^{(1)}_u > \alpha^{(2)}_u$ we have that $r\mapsto r^{-\frac32}\int_{\p B_1} w_{r}(\alpha^{(1)}_u-\alpha^{(2)}_u)\hat{v}_{3/2}$ is monotone non-decreasing by Lemma \ref{monotonicitysigniorini}. Moreover by H\"older's inequality and \eqref{wr:refined:L2bound} it is bounded from above and hence it is has a unique limit as $r \to \infty$. This implies that
	\begin{equation*}
		\int_{\p B_1} \alpha^{(1)}_u(\alpha^{(1)}_u-\alpha^{(2)}_u)\hat{v}_{3/2}^2d\sigma =  \int_{\p B_1} \alpha^{(2)}_u(\alpha^{(1)}_u-\alpha^{(2)}_u)\hat{v}_{3/2}^2d\sigma.
	\end{equation*}
	Re-arranging this we obtain
	\begin{equation*}
		\int_{\p B_1} (\alpha^{(1)}_u-\alpha^{(2)}_u)^2\hat{v}_{3/2}^2d\sigma=0
	\end{equation*}
	and so $\alpha_u^{(1)} = \alpha_u^{(2)}$.
\end{proof}

When $u_{\gamma P}$ is a paraboloid solution as constructed in Proposition \ref{paraboloidsolutions} we will denote the blow-down limit as $\alpha_{\gamma}v$. As a consequence of the uniqueness of the blow-down limit, we have the following identity for $\alpha_{\gamma}$. 

\begin{corollary}\label{scalingparab}
	Let $u_{\gamma P}$ be a paraboloid solution and $\alpha_{\gamma} v$ its blow-down. Then  
	\begin{equation}\label{scalingalpha}\nonumber
		\alpha_{\gamma} = \gamma^{\frac{1}{2}} \alpha_1.
	\end{equation}
\end{corollary}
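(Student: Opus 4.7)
The strategy is to exploit the natural parabolic scaling that relates $u_{\gamma P}$ and $u_{1\cdot P}$, then compute the blow-down directly. Precisely, I first claim that
\begin{equation}\nonumber
u_{\gamma P}(\gamma x) = \gamma^{2}\, u_{P}(x) \qquad \text{for all } x \in \R^{2}.
\end{equation}
This identity follows by combining the Newtonian expansion of Proposition \ref{expansionthm} with the scaling property of the generalised Newtonian potential in Lemma \ref{scaling}: since $u_{\gamma P}=p+V_{\gamma P}$ and $p$ is $2$-homogeneous, one has $p(\gamma x)=\gamma^{2}p(x)$, while $V_{\gamma P}(\gamma x)=\gamma^{2}V_{P}(x)$ by Lemma \ref{scaling} applied with $M=\gamma P$.

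Having this identity, the corollary is a direct substitution. Writing $w_{\gamma}:= u_{\gamma P}-p$, the identity above gives
\begin{equation}\nonumber
w_{\gamma}(\gamma x)=\gamma^{2}\, w_{1}(x).
\end{equation}
By Proposition \ref{uniqueness}, the blow-down $\alpha_{\gamma}\hat{v}_{3/2}$ is the (unique) limit
\begin{equation}\nonumber
\alpha_{\gamma}\hat{v}_{3/2}(x)=\lim_{r\to\infty} \frac{w_{\gamma}(rx)}{r^{3/2}}.
\end{equation}
Setting $r=\gamma s$ (so $s\to\infty$ iff $r\to\infty$, since $\gamma>0$ is fixed) and applying the scaling identity with $\gamma x$ replaced by $rx$ gives
\begin{equation}\nonumber
\frac{w_{\gamma}(rx)}{r^{3/2}}=\frac{\gamma^{2}\, w_{1}(sx)}{(\gamma s)^{3/2}}=\gamma^{1/2}\,\frac{w_{1}(sx)}{s^{3/2}}.
\end{equation}
Passing to the limit $s\to\infty$ and again invoking Proposition \ref{uniqueness} for $u_{P}$ yields
\begin{equation}\nonumber
\alpha_{\gamma}\hat{v}_{3/2}(x)=\gamma^{1/2}\,\alpha_{1}\hat{v}_{3/2}(x),
\end{equation}
and since $\hat{v}_{3/2}\not\equiv 0$, we conclude $\alpha_{\gamma}=\gamma^{1/2}\alpha_{1}$.

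There is no real obstacle here: the only nontrivial input is the scaling $u_{\gamma P}(\gamma x)=\gamma^{2}u_{P}(x)$, which is already built into the framework via Proposition \ref{expansionthm} and Lemma \ref{scaling}. The remaining argument is a change of variables in the definition of the blow-down combined with the uniqueness of the blow-down limit established in Proposition \ref{uniqueness}.
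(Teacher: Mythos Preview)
Your proof is correct and follows essentially the same approach as the paper: both combine Proposition~\ref{expansionthm} with Lemma~\ref{scaling} to obtain the scaling identity for $u_{\gamma P}-p$, and then pass to the blow-down limit via Proposition~\ref{uniqueness}. The only cosmetic difference is that you substitute $r=\gamma s$ in the blow-down sequence, whereas the paper keeps $r$ fixed and evaluates the limit at $x/\gamma$, invoking the $3/2$-homogeneity of $\hat{v}_{3/2}$ explicitly at the end.
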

\begin{proof}
	We use Proposition \ref{expansionthm} and Lemma \ref{scaling} to see that
	\begin{equation*}
		r^{-\frac32}(u_{\gamma P} - p)(rx) = r^{-\frac32} V_{\gamma P}(rx) = \gamma^2 r^{-\frac32}V_{P}(\frac{1}{\gamma}rx).
	\end{equation*}
	Using Proposition \ref{propuniqueness} we pass to the limit as $r\to{\infty}$ in the above and find that
	\begin{equation*}
		\alpha_{\gamma} \hat{v}_{3/2}(x) = \gamma^2 \alpha_1 \hat{v}_{3/2}(\frac{1}{\gamma}x) = \gamma^{\frac{1}{2}}\alpha_1 \hat{v}_{3/2}(x),
	\end{equation*}
	where in the last step we used the fact that $\hat{v}_{3/2}$ is $3/2$-homogeneous. 
\end{proof}

We can now match the second order blow-down with translations of paraboloid solutions. To this end, given $\sigma \in \R$ denote by $u_{\sigma}$ the global solution that has $\gamma P -\sigma e_1$ as its coincidence set. Observe that $u_{\sigma}(x_1,x_2) = u_{\gamma P}(x_1+\sigma,x_2)$.

\begin{proposition}\label{matching}
	Given a $x_2$-monotone solution $u$ to the obstacle problem as in Definition \ref{x2mondef} there exists a paraboloid solution $u_{\gamma P}$ such that
	\begin{equation}\label{matched:notranslation}
		\lim_{r\to\infty}\frac{(u-u_{\gamma P})(r\cdot)}{r^{\frac{3}{2}}} = 0 \text{ in } W^{1,2}_{\loc}\cap C^0_{\loc}(\R^2)
	\end{equation}
	Moreover given $\sigma \in [-1,1]$ we have that
	\begin{equation}\label{matched:translation}
		\lim_{r\to\infty}\frac{(u-u_{\sigma})(r\cdot)}{r^{\frac{3}{2}}} = 0 \text{ in } C^0_{\loc}(\R^2) .
	\end{equation}
\end{proposition}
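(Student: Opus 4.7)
The plan is to use uniqueness of the $3/2$-order blow-down (Proposition~\ref{uniqueness}) together with the explicit scaling for paraboloid solutions (Corollary~\ref{scalingparab}) to select the correct $\gamma$, and then to handle the translation by $\sigma e_1$ as a lower-order perturbation arising only from the quadratic part $p$.

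First I would choose $\gamma := (\alpha_u/\alpha_1)^2$, which by Corollary~\ref{scalingparab} gives $\alpha_\gamma = \gamma^{1/2}\alpha_1 = \alpha_u$. Since $u$ and $u_{\gamma P}$ are both $x_2$-monotone global solutions sharing the same first-order blow-down $p$, the decomposition
\begin{equation*}
\frac{(u-u_{\gamma P})(rx)}{r^{3/2}} \;=\; \frac{(u-p)(rx)}{r^{3/2}} \;-\; \frac{(u_{\gamma P}-p)(rx)}{r^{3/2}}
\end{equation*}
together with two applications of Proposition~\ref{uniqueness} (once to $u$, once to $u_{\gamma P}$) shows that both rescalings converge to $\alpha_u \hat{v}_{3/2}$ in $W^{1,2}_{\loc}\cap C^0_{\loc}(\R^2)$. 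Subtraction yields \eqref{matched:notranslation}.

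For the translated statement \eqref{matched:translation} I would exploit $u_\sigma(x) = u_{\gamma P}(x + \sigma e_1)$ and decompose
\begin{equation*}
\frac{(u-u_\sigma)(rx)}{r^{3/2}} \;=\; \frac{(u-p)(rx)}{r^{3/2}} \;-\; \frac{(u_{\gamma P}-p)(rx+\sigma e_1)}{r^{3/2}} \;-\; \frac{p(rx+\sigma e_1)-p(rx)}{r^{3/2}}.
\end{equation*}
Since $p(x)=\tfrac12 x_1^2$, the last term equals $\sigma x_1/\sqrt{r} + \sigma^2/(2r^{3/2})$, which tends to $0$ uniformly on compact sets, uniformly in $\sigma\in[-1,1]$. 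Writing the middle term as $\hat{w}_{\gamma,r}(x + \sigma e_1/r)$ with $\hat{w}_{\gamma,r}(y):=(u_{\gamma P}-p)(ry)/r^{3/2}$, and using the $C^0_{\loc}$ convergence $\hat{w}_{\gamma,r} \to \alpha_\gamma \hat{v}_{3/2}$ together with the uniform continuity of $\hat{v}_{3/2}$ on compact sets and $\sigma e_1/r \to 0$, a triangle inequality shows that this term converges to $\alpha_u \hat{v}_{3/2}(x)$ locally uniformly. Combining with the first term gives \eqref{matched:translation}.

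The main obstacle, although relatively mild once organized, is controlling the shift $\sigma e_1$ inside the rescaled paraboloid solution. The clean way is to enlarge the working compact set by $1$ to absorb $\sigma e_1/r$ for all $r\geq 1$, and then split
\begin{equation*}
\hat{w}_{\gamma,r}\bigl(x + \tfrac{\sigma e_1}{r}\bigr) - \alpha_\gamma \hat{v}_{3/2}(x) \;=\; \Bigl[\hat{w}_{\gamma,r}\bigl(x + \tfrac{\sigma e_1}{r}\bigr) - \alpha_\gamma \hat{v}_{3/2}\bigl(x + \tfrac{\sigma e_1}{r}\bigr)\Bigr] + \alpha_\gamma\Bigl[\hat{v}_{3/2}\bigl(x + \tfrac{\sigma e_1}{r}\bigr) - \hat{v}_{3/2}(x)\Bigr],
\end{equation*}
so that the first bracket is controlled by the locally uniform blow-down convergence on the enlarged compact set and the second by uniform continuity of $\hat{v}_{3/2}$. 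Note this is precisely why only $C^0_{\loc}$ convergence, and not $W^{1,2}_{\loc}$, is claimed in \eqref{matched:translation}: the quadratic correction $p(rx+\sigma e_1)-p(rx)$ has gradient of order $\sigma/\sqrt{r}$ which, although vanishing in $L^2_{\loc}$, is only needed at the $C^0$-level for the subsequent application in Section~\ref{section:conclusion}.
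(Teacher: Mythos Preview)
Your proof is correct and your choice of $\gamma$ together with the derivation of \eqref{matched:notranslation} is exactly the paper's argument. For \eqref{matched:translation} the paper takes a slightly shorter route: rather than splitting off $p$ and tracking the shift through the convergence of $\hat{w}_{\gamma,r}$, it writes
\[
\frac{(u-u_{\sigma})(rx)}{r^{3/2}} \;=\; \frac{(u-u_{\gamma P})(rx)}{r^{3/2}} \;-\; \frac{u_{\gamma P}(rx+\sigma e_1)-u_{\gamma P}(rx)}{r^{3/2}}
\]
and bounds the second term directly via the uniform $C^{1,1}$ estimate on $u_{\gamma P}$ (Lemma~\ref{lemma:basic:properties}): since $\norm{D^2 u_{\gamma P}}_{L^\infty}\leq C$ and $\nabla u_{\gamma P}(0)=0$, a first-order Taylor expansion gives $\sup_{B_R}|u_{\gamma P}(rx+\sigma e_1)-u_{\gamma P}(rx)|\leq C(R)\,r$, hence the quotient is $O(r^{-1/2})$. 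Your decomposition via $p$ and the compactness-plus-continuity argument for the shifted $\hat{w}_{\gamma,r}$ is equally valid, just a little longer; the paper's version avoids the triangle-inequality splitting by exploiting the global $C^{1,1}$ bound in one stroke.
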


\begin{proof}
	Given $\alpha_u$ from Proposition \ref{uniqueness} let $\gamma = \left(\frac{\alpha_u}{\alpha_1}\right)^2$  so that $\alpha_{\gamma} = \alpha_u$ (by Corollary \ref{scalingparab}) and hence \eqref{matched:notranslation} follows directly from Proposition \ref{refinedblowdown}. \\
	Now to obtain \eqref{matched:translation} we use a Taylor expansion with remainder along with the $C^{1,1}$ regularity of the solution $u_{\gamma P}$ to see that for each $R>0$,
	\begin{equation*}
		\sup_{B_R}\frac{\abs{u_{\gamma P}(rx_1+\sigma, rx_2)-u_{\gamma P}(rx_1,rx_2)}}{r^{\frac{3}{2}}} \leq C(R)r^{-\frac{1}{2}}.
	\end{equation*}
	This along with \eqref{matched:notranslation} establishes the $C^0_{\loc}$ convergence. 
\end{proof}

\section{Proof of Theorem \ref{maintheorem}}\label{section:conclusion}

We begin this section with the following Proposition. 

\begin{proposition}\label{propuniqueness}
	Let $u_1, u_2$ be two $x_2$-monotone solutions. If there exists some non-empty open set $\Omega \subset \{u_1>0\} \cap \{u_2 >0\}$ such that $(u_1-u_2)\vert_{\Omega} \equiv 0$ then $u_1 \equiv u_2$. 
\end{proposition}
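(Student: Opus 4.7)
The plan is to use analytic continuation in the common positive set. Both $u_1$ and $u_2$ satisfy $\Delta u_i = 1$ wherever they are positive, so on $\{u_1>0\}\cap\{u_2>0\}$ the difference $u_1-u_2$ is harmonic. The hypothesis gives that it vanishes identically on the non-empty open set $\Omega$, and the identity theorem for harmonic (in fact real-analytic) functions should propagate this vanishing. The goal is to push the propagation across the entire positive set of each solution, and then to deduce that the two coincidence sets must coincide.

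First I would define
\begin{equation*}
W_1 := \{x \in \{u_1>0\} : u_1 = u_2 \text{ on some neighborhood of } x\}.
\end{equation*}
By construction $W_1$ is open, and it is non-empty since $\Omega\subset W_1$. I claim it is relatively closed in $\{u_1>0\}$. Indeed, if $x_k\in W_1$ with $x_k\to x\in\{u_1>0\}$, then continuity forces $u_2(x)=\lim u_2(x_k)=\lim u_1(x_k)=u_1(x)>0$, so $x$ lies in the open set $\{u_1>0\}\cap\{u_2>0\}$. Choosing a ball $B_\rho(x)\subset\{u_1>0\}\cap\{u_2>0\}$, the function $u_1-u_2$ is harmonic on $B_\rho(x)$, vanishes on the neighborhood of some $x_k\in B_\rho(x)\cap W_1$, and therefore vanishes on all of $B_\rho(x)$ by the identity theorem; thus $x\in W_1$.

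Next I would observe that $\{u_1>0\}$ is connected. By Lemma \ref{lemma:basic:properties} the coincidence set $\mathcal{C}_1$ is convex, and by Definition \ref{x2mondef} it lies in $\{x_2\geq 0\}$ with $\mathcal{C}_1\cap\{x_2\leq 0\}=\{0\}$, so its recession cone is contained in $\{x_2\geq 0\}$. Consequently every point of $\R^2\setminus\mathcal{C}_1$ can be joined to the lower half-plane (which lies in the complement and is itself connected) by first moving along a separating direction furnished by Hahn--Banach and then descending around $\mathcal{C}_1$. Combining this with openness, non-emptiness and relative closedness of $W_1$ gives $W_1=\{u_1>0\}$, i.e.\ $u_1\equiv u_2$ on $\{u_1>0\}$.

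The symmetric argument (swap the roles of $u_1$ and $u_2$) yields $u_1\equiv u_2$ on $\{u_2>0\}$. To finish, if some $x\in\mathcal{C}_1\setminus\mathcal{C}_2$ existed, then $u_1(x)=0$ while $u_2(x)>0$, contradicting the identity $u_1=u_2$ on $\{u_2>0\}$; hence $\mathcal{C}_1=\mathcal{C}_2$, and $u_1\equiv u_2$ on all of $\R^2$. The step I expect to require the most care is the relative closedness of $W_1$, together with the verification that $\{u_1>0\}$ is connected under the $x_2$-monotone hypotheses; everything else reduces to the identity theorem and a direct comparison of the two coincidence sets.
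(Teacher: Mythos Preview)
Your proof is correct and follows essentially the same strategy as the paper: both propagate the vanishing of the harmonic function $u_1-u_2$ via the identity theorem through a suitable connected open set. The paper's version is slightly more direct, observing that $\Sigma=\{u_1>0\}\cap\{u_2>0\}$ is itself connected (being the strict subgraph of a function of $x_1$, by $x_2$-monotonicity), which lets the identity theorem run in one step and avoids your separate relative-closedness argument and the appeal to symmetry.
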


\begin{proof}
	We first observe that the set $\Sigma := \{u_1>0\}\cap \{u_2>0\}$ is connected 
since it is the subgraph of a function of the $x_1$-variable.
Since $\Delta (u_1-u_2) \equiv 0$ in the connected set $\Sigma$, and $(u_1-u_2) \equiv 0$ on the open set $\Omega$, we must have that $u_1 \equiv u_2$ in $\Sigma$. It follows that $u_1 = u_2 =0$ on $\p \Sigma$ which completes the proof.
\end{proof}
The following is a useful consequence. 
\begin{corollary}\label{corollary:around:the:tip}
	Let $u_1,u_2$ be two $x_2$-monotone solutions. If there exists $r >0$ such that $\{u_1 =0\} = \{u_2 =0\}$ in $B_r^{+}$ or in $B_r^{-}$ , then $u_1 \equiv u_2$.
\end{corollary}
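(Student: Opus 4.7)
The plan is to reduce to Proposition \ref{propuniqueness} by producing a non-empty open subset of $\Sigma := \{u_1 > 0\} \cap \{u_2 > 0\}$ on which $u_1 \equiv u_2$. Assume without loss of generality that $\{u_1 = 0\} = \{u_2 = 0\}$ in $B_r^+$ (the $B_r^-$ case being symmetric). The first observation is that the hypothesis forces $\chi_{\{u_1 > 0\}} = \chi_{\{u_2 > 0\}}$ on $B_r^+$, and consequently $\Delta(u_1 - u_2) = 0$ almost everywhere in $B_r^+$. Since $u_1 - u_2 \in C^{1,1}$, standard interior elliptic regularity upgrades this to $u_1 - u_2$ being real analytic in the connected open set $B_r^+$.

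Next I would show that the common coincidence set $\{u_1 = 0\} \cap B_r^+$ has non-empty interior. Here the $x_2$-monotone structure is essential: each $\mathcal{C}_i$ is convex with $0 \in \partial \mathcal{C}_i$ and has non-empty interior (by Definition \ref{x2mondef}), while the blow-down being $\tfrac{1}{2}x_1^2$ together with the analyticity of the free boundary from Lemma \ref{lemma:basic:properties} rules out $\mathcal{C}_i$ lying in a single half-plane near the origin, so $\mathcal{C}_i$ must meet both $\{x_1 > 0\}$ and $\{x_1 < 0\}$. Convexity combined with $0 \in \partial \mathcal{C}_i$ then yields $\mathrm{int}(\mathcal{C}_i) \cap B_r^+ \neq \emptyset$ for every $r > 0$. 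On this non-empty open set both $u_1$ and $u_2$ vanish, so $u_1 - u_2 = 0$ on an open subset of the connected set $B_r^+$; by real-analyticity, $u_1 - u_2 \equiv 0$ on all of $B_r^+$.

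Taking $\Omega := \{u_1 > 0\} \cap B_r^+$, which is a non-empty open subset of $\Sigma$ on which $u_1 \equiv u_2$, Proposition \ref{propuniqueness} concludes $u_1 \equiv u_2$ globally. The main obstacle is the second step, establishing that the common coincidence set inside $B_r^+$ genuinely has two-dimensional content rather than being empty or lower-dimensional. If that geometric input proves awkward, an alternative route is a Cauchy--Kovalevskaya argument: since $u_1 - u_2$ is harmonic in $\Omega$ and has zero Cauchy data on any arc of the analytic free boundary inside $B_r^+$ (both $u_i$ and $\nabla u_i$ vanish on the free boundary), it must vanish in a one-sided neighborhood of that arc, directly yielding the open subset of $\Sigma$ on which $u_1 \equiv u_2$ that Proposition \ref{propuniqueness} requires.
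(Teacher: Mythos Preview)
Your approach is essentially the same as the paper's: both arguments observe that $\Delta(u_1-u_2)=0$ on a region where the coincidence sets agree, note that $u_1-u_2$ vanishes on an open subset of that region (the interior of the common coincidence set), use analyticity of harmonic functions to propagate $u_1-u_2\equiv 0$, and then invoke Proposition~\ref{propuniqueness}. The paper works on a small ball $B\subset B_r^+$ straddling the common free boundary, while you work on all of $B_r^+$; these are equivalent.

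One small point deserves tightening. Your justification that $\mathrm{int}(\mathcal{C}_i)\cap B_r^+\neq\emptyset$ appeals to the blow-down $\tfrac12 x_1^2$, but the blow-down at infinity says nothing about the local geometry of $\mathcal{C}_i$ near the origin; a convex set contained in $\{x_1\le 0\}\cap\{x_2\ge 0\}$ is not ruled out by that alone. The correct (and simpler) argument is purely local: since $\mathcal{C}_i\cap\{x_2\le 0\}=\{0\}$ and $\partial\mathcal{C}_i$ is analytic through $0$, the tangent line to $\partial\mathcal{C}_i$ at $0$ must be $\{x_2=0\}$, so near $0$ the free boundary is a graph $x_2=g(x_1)$ with $g$ analytic and $g(0)=g'(0)=0$. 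Hence $\mathcal{C}_i$ locally equals $\{x_2\ge g(x_1)\}$ and has non-empty interior in both $B_r^+$ and $B_r^-$. The paper simply asserts the existence of such a ball $B$ (with a reference to a figure) without spelling this out, so your instinct to justify it is good; just replace ``blow-down'' by the local tangent-line argument. Your alternative Cauchy--Kovalevskaya route also works and yields the same conclusion.
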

\begin{proof}
	If the coincidence sets of $u_1$ and $u_2$ coincide in $B_r^+$ then there exists a ball $B$ such that $\left(B \cap \left(\{u_1=0\}\cap \{u_2 =0\}\right)\right)^{\circ} \neq \emptyset$ and $\left(B\cap \left(\{u_1 > 0\} \cap \{u_2 >0\}\right)\right)^{\circ} \neq \emptyset$ (see Figure \ref{prop:tip:pic}). In $B$ we have that $\Delta (u_1-u_2) = 0$ and so we can conclude that $u_1 -u _2 \equiv 0$ in $B$. On the other hand, $\left(B\cap \left(\{u_1 > 0\} \cap \{u_2 >0\}\right)\right)^{\circ} \neq \emptyset$ and so there exists some open set $\Omega \subset B\cap \left(\{u_1 > 0\} \cap \{u_2 >0\}\right)$ such that $u_1 -u_2 \equiv 0$ on $\Omega$. Applying Proposition \ref{propuniqueness} we conclude that $u_1 \equiv u_2$.
\end{proof}
Finally, we will require the following comparison result for any two solutions of the obstacle problem.
\begin{proposition}\label{proposition:unbounded}
	Suppose that $u_1, u_2$ are two solutions of the obstacle problem. Then each connected component of the set $\{u_1 > u_2\}$ must be unbounded. 
\end{proposition}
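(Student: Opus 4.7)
The plan is to argue by contradiction using the subharmonicity of $(u_1-u_2)_+$ provided by Lemma \ref{differenceenergyinequality}. Since both $u_1$ and $u_2$ are continuous (indeed $C^{1,1}$ by Lemma \ref{lemma:basic:properties}), the set $\{u_1 > u_2\}$ is open, so its connected components are open. Let $U$ be such a connected component and assume for contradiction that $U$ is bounded.

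First I would verify that $(u_1-u_2)_+ \equiv 0$ on $\partial U$. Any $x \in \partial U$ is a limit of points of $U$, so continuity gives $(u_1-u_2)(x) \geq 0$. On the other hand, $x \notin U$, and since $U$ is a connected component of the open set $\{u_1 > u_2\}$, the point $x$ cannot lie in $\{u_1 > u_2\}$ at all (otherwise the connected component of $\{u_1 > u_2\}$ containing $x$ would meet $U$ and hence coincide with $U$, forcing $x \in U$, a contradiction). Therefore $(u_1-u_2)(x) \leq 0$, which combined with the previous inequality gives $u_1 = u_2$ on $\partial U$.

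Next I would invoke the maximum principle. Since $(u_1-u_2)_+$ is subharmonic in $\mathbb{R}^2$ by Lemma \ref{differenceenergyinequality}, continuous, and $U$ is bounded, the weak maximum principle for subharmonic functions yields
\begin{equation*}
\sup_{\overline{U}}(u_1-u_2)_+ \; = \; \sup_{\partial U}(u_1-u_2)_+ \; = \; 0.
\end{equation*}
But by definition $U \subset \{u_1 > u_2\}$, so $(u_1-u_2)_+ > 0$ at every point of the non-empty set $U$, a contradiction. Hence $U$ must be unbounded.

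There is no substantial obstacle here; the only point requiring any care is the boundary identification $u_1 = u_2$ on $\partial U$, which needs the combination of continuity with the fact that $U$ is a full connected component (not merely a connected subset) of $\{u_1 > u_2\}$. The subharmonicity input from Lemma \ref{differenceenergyinequality} is exactly what makes the maximum principle applicable without any assumption beyond $u_i$ being solutions to \eqref{obstacleproblem}.
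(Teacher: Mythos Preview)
Your proof is correct and follows essentially the same approach as the paper: assume a bounded connected component, observe that the difference vanishes on its boundary, and use the subharmonicity from Lemma \ref{differenceenergyinequality} together with the maximum principle to derive a contradiction. The paper applies this to $\abs{u_1-u_2}$ rather than $(u_1-u_2)_+$, but this is immaterial, and you supply more detail on the boundary identification than the paper does.
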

\begin{proof}
	Suppose that there is a connected component $\Omega$ of $\{u_1 > u_2\}$ that is bounded. Then $\abs{u_1-u_2} = 0$ on $\p \Omega$ but it is subharmonic by Lemma \ref{differenceenergyinequality}. Hence, $u_1\equiv u_2$ in $\Omega$, a contradiction. 
\end{proof}
\begin{figure}
	\centering
	\includegraphics[scale=1.5]{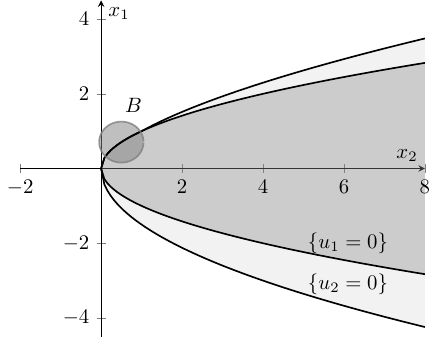}
	\caption{Setting of the proof of Corollary \ref{corollary:around:the:tip}}
	\label{prop:tip:pic}
\end{figure}

We can now give the 
\begin{proof}[Proof of Theorem \ref{maintheorem}:]
	Given any global solution $u$ as in Definition \ref{x2mondef} we let $u_{\gamma P}$ be the paraboloid solution given by Proposition \ref{matching}. Moreover we recall the notation $u_{\sigma} = u_{\gamma P}(x_1+\sigma,x_2)$ for $\sigma \in \R$ and we will denote the coincidence set of $u_{\sigma}$ as $P_{\sigma}$. \\
	We will show that $u=u_{\gamma P}$ by a contradiction argument. If $u$ and $u_{\gamma P}$ were not identical, we would have that $C$ and $\gamma P$ touch at the origin but cannot be identical in any half ball around the origin by Corollary \ref{corollary:around:the:tip}. Since $\p C$ is analytic, there exists some small $\rho >0$ such that the free boundaries of $u$ and $u_{\gamma P}$ do not intersect in $B_{\rho}$ except at the origin (recall that $\p \{u_{\gamma P} >0\})$ is a paraboloid).
	
	\begin{claim}{1}
	 There exists some $\eps_0 > 0$ such that the boundary of $P_{\sigma}$ intersects $\p C$ at least twice in $B_{\rho}$ for $0 < \sigma \leq \eps_0$ or $-\eps_0 \leq \sigma < 0$ . 
	\end{claim}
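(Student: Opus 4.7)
The plan is to view both $\p\mathcal{C}$ and $\p P_\sigma$ locally as graphs over the $x_1$-axis and reduce the intersection count to counting zeros of a one-variable analytic function. Since $\mathcal{C}$ is convex, contained in $\{x_2\ge 0\}$, and touches $\{x_2=0\}$ only at the origin, the supporting line of $\mathcal{C}$ at $0$ must be the $x_1$-axis, so by analyticity, after possibly shrinking $\rho$, we may write $\p\mathcal{C}\cap B_\rho$ as a graph $x_2=g(x_1)$ with $g$ real-analytic and convex, $g(0)=g'(0)=0$, and $g(x_1)>0$ for $0<|x_1|\le\rho$. Since $\p P_\sigma$ is the graph $x_2=(x_1+\sigma)^2/\gamma$, intersection points in $B_\rho$ correspond (for $|\sigma|$ small) to the zeros of the analytic function
\begin{equation*}
 h_\sigma(y) := g(y) - (y+\sigma)^2/\gamma.
\end{equation*}
The hypothesis that $\p\mathcal{C}$ and $\p(\gamma P)$ meet only at the origin in $B_\rho$ says $h_0$ vanishes only at $y=0$, so by analyticity $h_0(y)=by^k+O(y^{k+1})$ for some integer $k\ge 2$ and $b\ne 0$.

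Two elementary observations produce the first zero for every $0<|\sigma|\le\eps_0$: $h_\sigma(0)=-\sigma^2/\gamma<0$ and $h_\sigma(-\sigma)=g(-\sigma)>0$, so the intermediate value theorem yields a zero between $0$ and $-\sigma$. For a second zero I examine $h_\sigma$ at a point of order one, where $h_\sigma\to h_0$ by continuity. If $k$ is even, then $h_0$ has a constant sign on $(-\rho/2,\rho/2)\setminus\{0\}$: when $b>0$, both $h_\sigma(\pm\rho/2)$ are positive and together with $h_\sigma(0)<0$ force a sign change on each side of $0$; when $b<0$, both $h_\sigma(\pm\rho/2)$ are negative and, combined with $h_\sigma(-\sigma)>0$ and $h_\sigma(0)<0$, produce two sign changes in the small interval containing $-\sigma$. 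Either way at least two distinct zeros appear for both signs of $\sigma$.

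If instead $k$ is odd (the degenerate higher-order tangency case $a_2=1/\gamma$ with $a_3\ne 0$), then $h_0$ changes sign through $0$; assuming without loss of generality $h_0>0$ on $(0,\rho/2)$ and $h_0<0$ on $(-\rho/2,0)$, the sign pattern of $h_\sigma$ at the points $-\rho/2,-\sigma,0,\rho/2$ for $\sigma>0$ small is $(-,+,-,+)$, giving three sign changes and hence three zeros. For $\sigma<0$ the same pattern collapses to a single sign change and only one intersection survives, so one selects the favourable sign of $\sigma$ and recovers at least two intersections --- this is precisely what the disjunction in Claim~1 is designed to allow. In every case the zeros cluster near the origin as $|\sigma|\to 0$, so they lie in $B_\rho$ and correspond to distinct intersection points of $\p P_\sigma$ with $\p\mathcal{C}$. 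The main subtlety is the higher-order tangency case, where the asymmetry between the two signs of $\sigma$ is the reason that ``or'' (rather than a stronger ``both signs'') appears in the statement.
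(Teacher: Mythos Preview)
Your argument is correct and follows essentially the same route as the paper's proof. The paper phrases the case analysis geometrically (whether $\mathcal C\cap B_\rho\subset\gamma P$, $\gamma P\cap B_\rho\subset\mathcal C$, or a crossing case in $B_\rho^{\pm}$), while you translate this into the sign of the leading coefficient $b$ and the parity of $k$ for the analytic function $h_0(y)=g(y)-y^2/\gamma$; the three geometric cases correspond exactly to $k$ even with $b>0$, $k$ even with $b<0$, and $k$ odd, and your intermediate-value argument makes rigorous what the paper indicates by a sliding picture. One small point: rather than asserting that the zeros ``cluster near the origin'', it is cleaner to fix from the outset a $\delta\in(0,\rho)$ so that the graph portion $\{(y,g(y)):|y|\le\delta\}$ lies in $B_\rho$ and use $\pm\delta$ as the endpoint test values.
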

	\begin{proof}[Proof of Claim 1:]
		We define
		\begin{equation*}
			\eps_0 = \frac{1}{2}\min\{ \dist(\gamma P \cap B_{\rho}^+, C \cap B_{\rho}^+), \dist(\gamma P \cap B_{\rho}^-, C \cap B_{\rho}^-)\}.
		\end{equation*} 
		Now, there are three possible cases, either
		\begin{enumerate}
			\item $C \cap B_{\rho} \subset \gamma P$, or,
			\item $\gamma P \cap B_{\rho} \subset C$ , or,
			\item $\gamma P \cap B^+_{\rho} \subset C$ while $C \cap B_{\rho}^- \subset \gamma P$ (or vice versa).
		\end{enumerate}
		We examine the first case. Since $\p \{u_{\gamma P} > 0 \}$ and $\p C$ touch at the origin, passing from $u_{\gamma P}$ to $u_{\sigma}$ for $0 \neq \abs{\sigma} \leq \eps_0$ will produce at least two points of intersection (see Figure \ref{slidepic}). Case 2 is handled in an identical manner. 
		
		In the third case, passing from $u_{\gamma P}$ to $u_{\sigma}$ for $-\eps_0 \leq \sigma < 0$ will produce at least three points of intersection. Indeed, since $C$ and $\gamma P$ touch at the origin there will be at least two points of intersection between $C$ and $P_{\sigma}$ in $B_{\rho}^+$ and at least one in $B_{\rho}^-$. In the vice versa case one would take $0 < \sigma \leq \eps_0$, which concludes the proof of the claim. 
	\end{proof}
	
	Now for each $\sigma$ given by Claim 1, there must exist at least two curves $L^{\sigma}_1, L^{\sigma}_2 \subset \R^2$ along which $u-u_{\sigma}=0$ and $u-u_{\sigma}$ changes sign across them (see Figure \ref{slidepic}). Moreover, by Proposition \ref{proposition:unbounded} the curves $L_1^{\sigma}$ and $L_2^{\sigma}$ must be unbounded, and so there are three unbounded sets $\Omega^{\sigma}_i$ for $i=1,2,3$ such that $u-u_{\sigma}=0$ on $\p\Omega^{\sigma}_i$ (see Figure \ref{slidepic}). For $i=1,2$, $\Omega_i^{\sigma}$ will have boundary given by $L_i^{\sigma}$ and the unbounded component of $\p C \cup \p P_{\sigma}$ that intersects $L^{\sigma}_i$ only and along which $u-u_{\sigma}=0$.  The set $\Omega_3^{\sigma}$ will have boundary given by $L_1^{\sigma}$, $L^{\sigma}_2$ and the component of  $\p C \cup \p P_{\sigma}$ along which $u-u_{\sigma}=0$ and that lies between the points of intersection with $L^{\sigma}_1$ and $L^{\sigma}_2$.
	
	\begin{figure}
		\centering
		\includegraphics[scale=1.5]{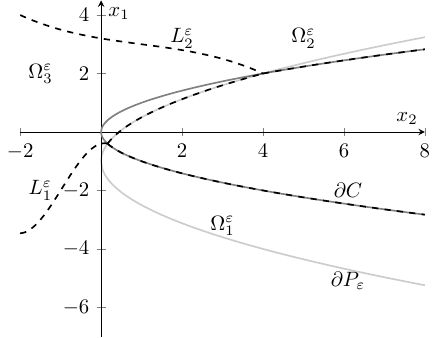}
		\caption{The sliding of $\gamma P$ downwards produces at least three unbounded regions. The dashed lines represent their boundaries.}
		\label{slidepic}
	\end{figure}
	
	Observe that for $i=1,2,3$ we have that $(\Omega_i^{\sigma} \cap \left(\{u>0\} \cap \{u_{\sigma} > 0\}\right))^{\circ} \neq \emptyset$ and $\p \Omega_i^{\sigma} \cap \p B_r \neq \emptyset$ for every $r \geq \rho$. 
	
	Since $\abs{u-u_{\sigma}}$ is a non-negative subharmonic function by Lemma \ref{differenceenergyinequality}, we can apply Theorem \ref{prop:modACF} to the functions
	 $v^{\sigma}_i= \abs{u-u_{\sigma}}\chi_{\Omega^{\sigma}_i}$
	to see that the functional 
	\begin{equation*}
		\Phi(r;\sigma) := \frac{1}{r^9} \prod_{i=1}^3 \int_{B_r} \abs{\nabla v^{\sigma}_i}^2 
	\end{equation*}
	is monotone non-decreasing in $r$ for $r\geq\rho$. We then find using Lemma \ref{differenceenergyinequality} again that
	\begin{equation*}
		\Phi(r;\sigma) \leq C  \prod_{i=1}^3 \int_{B_{2}} (r^{-3/2}(v_i^{\sigma})_r)^2.
	\end{equation*}
	By Proposition \ref{matching} we have that the right hand side above converges to $0$ as $r\to \infty$. Since $\Phi(r;\sigma)$ is non-negative and monotone non-decreasing for $r \geq \rho$ we have that $\Phi(r;\sigma) = 0$ for all $r \geq \rho$. Since $\Phi(\rho;\sigma) =0$ implies that $\Phi(r;\sigma)=0$ for all $r\leq \rho$ we have that
	\begin{equation*}
		\Phi(r;\sigma) = 0 \text{ for all } r\geq 0 .
	\end{equation*}
	 So for each $\sigma$ given by Claim 1, one of the $v_i^{\sigma} \equiv 0$ which means there exists some ball $B^{\sigma} \subset \{u >0\} \cap \{u_{\sigma}>0\}$ such that $(u-u_{\sigma})\vert_{B^{\sigma}} \equiv 0$. Proposition \ref{propuniqueness} then implies that $u \equiv u_{\sigma}$. This is clearly not possible for each $\sigma$ as in Claim 1, and so we must have instead that $C = \gamma P$.
\end{proof}

\begin{appendices}
\section{Proof of Lemma \ref{holderboundlemma}}\label{holderboundapp}
In this appendix we provide the proof of Lemma \ref{holderboundlemma} that is in \cite[Lemma A5]{franceschini2021c}. We note that a Lipschitz estimate can be obtained in our setting (see for instance \cite{figalli2019fine}), however we will not do that here. For simplicity we will work in $B_8$ and given $w: B_2 \to \R$ Lipschitz we introduce the notation 
\begin{equation*}
	[\delta_j w]_{\sigma} = \sup_{x \in B_{1}, \abs{t} \leq 1} \frac{\abs{u(x + te_j)-u(x)}}{t^{\sigma}}
\end{equation*}
for $j\in \{1,\dots,n\}$ and $\sigma \in (0,1]$. 
We first note the following estimate on the H\"older semi-norms of $w \in Lip(B_2)$ (see \cite[Lemma C.1]{franceschini2021c}).
\begin{lemma}
	\label{lemma:holder:characterisation}
	Let $w \in Lip (B_2)$. For each $p>1$ and each $\sigma \in (0,1]$ there exists a constant $C=C(n, p,\sigma) > 0$ such that
	\begin{equation*}
		[w]_{C^{\beta}(B_{1})} \leq C \left(\sum_{j=1}^{n-1}[\delta_jw]_{\sigma} + \norm{\p_n w}_{L^p(B_2)}\right)
	\end{equation*}
	where $\beta = \frac{\sigma(p-1)}{\sigma p +n -1}$.
\end{lemma}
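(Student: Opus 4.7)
The plan is to estimate $|w(x)-w(y)|$ for two points $x,y\in B_1$ by interposing a slice-average of $w$ in the first $n-1$ variables, which serves as a bridge between pointwise information in the transverse directions (controlled by $M_1:=\sum_{j=1}^{n-1}[\delta_j w]_\sigma$) and integrated information in the $n$-th direction (controlled by $M_2:=\|\partial_n w\|_{L^p(B_2)}$). Write $d=|x-y|$, $x=(x',x_n)$, $y=(y',y_n)$, and for a parameter $r>0$ to be chosen later, set $A=B_r(m')\subset\mathbb R^{n-1}$ where $m'=(x'+y')/2$, and
\[
\bar w_A(s)=\fint_A w(z',s)\,dz'.
\]
The triangle inequality gives $|w(x)-w(y)|\le I_1+I_2+I_3$ with $I_1=|w(x)-\bar w_A(x_n)|$, $I_2=|\bar w_A(x_n)-\bar w_A(y_n)|$, and $I_3=|\bar w_A(y_n)-w(y)|$.

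For $I_1$ and $I_3$, I would chain in coordinate directions: for any $z'\in A$, move $(x',x_n)$ to $(z',x_n)$ by successively changing one coordinate at a time. Each step is a pure $e_j$-translation with $j\le n-1$, so the Hölder-$\sigma$ assumption on the transverse increments yields, after summing, $|w(x',x_n)-w(z',x_n)|\le CM_1(r+d)^\sigma$. Taking $r\ge d$ and noting $A\subset B_{2r}(x')$ hence $I_1\le C M_1 r^\sigma$, and similarly $I_3\le C M_1 r^\sigma$ (one needs to verify that all chaining steps remain inside the domain on which $[\delta_j w]_\sigma$ is defined, which is a minor technicality handled by shrinking to a slightly smaller ball and using that $w\in\mathrm{Lip}(B_2)$). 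For $I_2$, since $w$ is Lipschitz, $w(z',x_n)-w(z',y_n)=\int_{y_n}^{x_n}\partial_n w(z',t)\,dt$ for a.e.\ $z'$, so Fubini and two applications of Hölder's inequality (in $t$ with exponent $p$, then in $z'$) give
\[
I_2 \le \fint_A\Bigl|\int_{y_n}^{x_n}\partial_n w(z',t)\,dt\Bigr|\,dz'
\le C\,d^{\,1-1/p}\,r^{-(n-1)/p}\,M_2.
\]

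Now comes the optimization. Combining, $|w(x)-w(y)|\le C\bigl(M_1 r^\sigma+d^{1-1/p}r^{-(n-1)/p}M_2\bigr)$, and choosing $r$ to balance the two terms gives
\[
r_*=d^{(p-1)/(\sigma p+n-1)}\,(M_2/M_1)^{\,p/(\sigma p+n-1)},
\]
with minimum value $C\,d^{\beta}\,M_1^{(n-1)/(\sigma p+n-1)}\,M_2^{\sigma p/(\sigma p+n-1)}$ where $\beta=\frac{\sigma(p-1)}{\sigma p+n-1}$. Since the exponents of $M_1$ and $M_2$ sum to $1$, Young's inequality bounds this by $Cd^\beta(M_1+M_2)$, which is the desired estimate.

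The main obstacles are the boundary cases for the choice of $r$: one must ensure the averaging ball $A$ stays inside the domain where the chaining argument is valid, so if $r_*$ exceeds a fixed $r_0\sim 1$ (determined by the geometry of $B_1\subset B_2$) one has to truncate $r=r_0$. In this regime one argues that the constraint $r_*>r_0$ forces $M_2 d^{(p-1)/p}\gtrsim M_1$, so the second term dominates; using $\beta\le 1-1/p$ and $d\lesssim 1$ yields $d^{1-1/p}\le Cd^\beta$, recovering the bound up to a constant depending on $r_0$. The case $d\gtrsim 1$ is absorbed analogously since $d^\beta$ is then bounded below by a constant and both $M_1$, $d^{1-1/p}M_2$ are trivially $\le C(M_1+M_2)$. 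Once these boundary regimes are handled, one obtains the uniform estimate $[w]_{C^\beta(B_1)}\le C(n,p,\sigma)(M_1+M_2)$ as claimed.
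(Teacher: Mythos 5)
The paper does not actually prove this lemma (it is quoted from \cite{franceschini2021c}*{Lemma C.1}), so your argument stands on its own; it is the natural slice-average/chaining proof one would write, and the core computation is correct: the bound $I_2\le C\,d^{1-1/p}r^{-(n-1)/p}M_2$, the balancing radius $r_*$, and the exponent $\beta=\frac{\sigma(p-1)}{\sigma p+n-1}$ with the Young-inequality step all check out, as does your treatment of the regime where $r_*$ exceeds the geometric threshold $r_0$ (there $\beta\le 1-\frac1p$ is indeed always true).

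There is, however, a genuinely missing regime: the case $r_*<d$ (e.g.\ $M_2=0$, where $r_*=0$). Moving from $x'$ into the averaging ball costs at least the transverse displacement, so your chaining bound is really $I_1+I_3\le CM_1(r+d)^\sigma\ge cM_1d^\sigma$ no matter how small $r$ is chosen; in the regime $r_*\le d$ the constrained optimum is $r=d$, and since $r_*\le d$ is equivalent to $M_2d^{(p-n)/p}\le M_1d^\sigma$, the resulting bound is $CM_1d^\sigma$. To conclude you need $d^\sigma\le Cd^\beta$ for small $d$, i.e.\ $\beta\le\sigma$, which you never verify and which holds precisely when $p(1-\sigma)\le n$. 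This is not a removable technicality: when $p(1-\sigma)>n$ the stated inequality is in fact false (take $w(x)=(x_1^2+\eps^2)^{\sigma/2}$, which is Lipschitz for each $\eps>0$, has $\sum_j[\delta_jw]_\sigma\le C(\sigma)$ uniformly and $\p_nw\equiv0$, while $[w]_{C^\beta(B_1)}\gtrsim\eps^{\sigma-\beta}\to\infty$), so the correct general exponent is $\min(\sigma,\beta)$. Your proof therefore needs the extra step handling $r_*<d$ together with the observation $\beta\le\sigma$ — harmless in the paper's application, where $n=p=2$, $\sigma=1$ and $\beta=\frac13$ — or a corresponding restriction on $(p,\sigma,n)$. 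A smaller point: your fix for the chaining-domain issue (``shrink to a slightly smaller ball'') does not literally produce the seminorm on $B_1$ as stated; for $n=2$, the only case the paper uses, there are no intermediate chaining points and the issue essentially disappears, but for $n\ge3$ you should say how increments based at points outside $B_1$ are controlled (for instance by an interior choice of $A$ plus a covering argument), since $[\delta_jw]_\sigma$ only controls increments based in $B_1$.
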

We can now give the
\begin{proof}[Proof of Lemma \ref{holderboundlemma}]
	The aim is to apply Lemma \ref{lemma:holder:characterisation} when $n=2$ to the difference $w_r=(u-p)_r$ with $p=2$ and $\sigma =1$.
	We first note that as a consequence of Lemma \ref{differenceenergyinequality} there exists a universal constant $C$ such that
	\begin{equation}
		\label{equation:Linfty:L2:bound}
		\norm{w_r}_{L^{\infty}(B_{4})} \leq C \norm{w_r}_{L^2(B_8)}.
	\end{equation}
	and
	\begin{equation}
		\label{equation:caccioppolli}
		\norm{\nabla w_r}_{L^{2}(B_{4})} \leq C \norm{w_r}_{L^2(B_8)}.
	\end{equation}
	Indeed, $\abs{w_r}$ is subharmonic, and so for each $x \in B_4$ we have that
	\begin{align*}
		\abs{w_r}(x) \leq \strokedint_{B_4(x)} \abs{w_r} \leq C\norm{w_r}_{L^2(B_8)},
	\end{align*}
	which establishes \eqref{equation:Linfty:L2:bound}. Moreover, \eqref{equation:caccioppolli} follows directly from Lemma \ref{differenceenergyinequality}.
	We will now show that there exists a universal constant $C$ such that
	\begin{equation}
		\label{equation:holder:quotient}
		[\delta_2w_r]_1 \leq C \norm{w_r}_{L^2(B_8)}.
	\end{equation}
	Given $0 < t <1$ we define the function
	\begin{equation*}
		(\delta_2^{\pm}w_r)(x) = \frac{w_r(x\pm te_2)-w_r(x)}{t}
	\end{equation*}
	and we observe that 
	\begin{equation}
		\label{equation:estimate:quotient}
		\norm{\delta_2^{\pm}w_r}_{L^2(B_{2})} \leq C \norm{w_r}_{L^2(B_8)}
	\end{equation}
	as a consequence of \eqref{equation:caccioppolli}. Now since $p(x) = \frac{1}{2}x_1^2$ we have that $\delta_2^{\pm} w_r = \delta_2^{\pm} u_r$, and so we find that $\Delta(\delta_2^{\pm} w_r) \leq 0$ in $\{u_r > 0\} \cap B_4$ and $\delta_2^{\pm}w_r \geq 0$ in $\{u_r = 0\} \cap B_4$. This implies that $\min\{\delta_2^{\pm}w_r,0\}$ is superharmonic and we obtain using \eqref{equation:estimate:quotient} that
	\begin{equation*}
		\min_{B_{2}} \delta_2^{\pm}w_r \geq - C \norm{w_r}_{L^2(B_8)}.
	\end{equation*}
	Now since $\delta_2^{\pm}w_r(x \mp te_j) = -(\delta_2^{\mp}w_r)(x)$ we find
	\begin{equation*}
		\max_{B_{1}} \delta_2^{\pm}w_r \leq C \norm{w_r}_{L^2(B_8)}
	\end{equation*}
	which concludes Step 3. Now Lemma \ref{lemma:holder:characterisation} can be applied and we obtain, using \eqref{equation:Linfty:L2:bound}, the desired estimate.
\end{proof}

\section{Proof of Proposition \ref{expansionthm}}\label{section:appendix:expansion}
In this appendix we prove Proposition \ref{expansionthm}. However, we note that using the estimates on the coincidence set obtained in Proposition \ref{Cdelta}, the following computations (and hence the expansion) can be established for any $x_2$-monotone global solution as in Definition \ref{x2mondef}.
Recall that
\begin{equation*}
	G(x,y)=\log(\abs{x-y})-\log(\abs{y})+\frac{x\cdot y}{\abs{y}^2}
\end{equation*}
and note that 
\begin{equation}\label{fundamentalsolution}
	-\frac{1}{2\pi}\Delta_x G(x,y)=-\delta_x
\end{equation}
in the sense of distributions while
\begin{equation}
	\label{potential:zero}
	G(0,y) = \abs{\nabla_x G(0,y)} = 0.
\end{equation}

The main step in the proof of Proposition \ref{expansionthm} consists in showing that $V_{\gamma P}(x)$ is subquadratic. Once this is achieved the proof can be carried out using a Liouville argument exactly as in the proof of \cite[Proposition 3.4]{eberle2022complete}. 

\begin{lemma}\label{subquadraticbehaviour} 
	There exists a constant $C=C(\gamma)$ such that
	\begin{equation*}
		\abs{V_{\gamma P}(x)}\leq C(1+\abs{x}^{\frac74}) \hspace{2mm} \text{for all $x\in \R^2$}.
	\end{equation*}
\end{lemma}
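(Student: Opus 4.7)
The plan is to invoke the scaling identity $V_{\gamma P}(\gamma x) = \gamma^2 V_P(x)$ from Lemma \ref{scaling} to reduce to the case $\gamma = 1$: once the bound $|V_P(x)| \leq C(1+|x|^{7/4})$ is established, the conclusion for $V_{\gamma P}$ follows by rescaling (at the cost of a $\gamma$-dependent constant). For $|x|$ small the bound is immediate from continuity together with $V_P(0) = 0$ (via \eqref{potential:zero}), so I will focus on $|x| \geq 1$ and split
\begin{equation*}
V_P(x) = -\frac{1}{2\pi}\int_{P\cap B_{2|x|}} G(x,y)\,dy - \frac{1}{2\pi}\int_{P \setminus B_{2|x|}} G(x,y)\,dy =: I_{\mathrm{near}}(x) + I_{\mathrm{far}}(x).
\end{equation*}

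The far part is where the cancellation \eqref{potential:zero} plays its role. Since $G(0,y)=0$, $\nabla_x G(0,y)=0$, and $|D^2_x G(\cdot,y)|$ is of order $|x-y|^{-2}\leq 4|y|^{-2}$ whenever $|y|\geq 2|x|$, a second-order Taylor expansion gives $|G(x,y)|\leq C|x|^2/|y|^2$ on this region. The key sparseness of the paraboloid is $|P\cap B_R|\leq CR^{3/2}$ for $R\geq 1$ (the slab $\{y_2\approx R\}\cap P$ has width $2\sqrt{R}$), so a dyadic decomposition over $|y|\sim 2^k|x|$ yields
\begin{equation*}
|I_{\mathrm{far}}(x)| \leq C|x|^2 \sum_{k\geq 0} (2^k|x|)^{3/2}(2^k|x|)^{-2} \leq C|x|^{3/2}.
\end{equation*}

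For the near part I would estimate each of the three summands of $G$ separately on $P\cap B_{2|x|}$. The log singularities of $\log|x-y|$ at $y=x$ and of $\log|y|$ at $y=0$ are integrable and contribute $O(1)$; on the complement of small neighborhoods of these singularities, $|\log|x-y||+|\log|y||\leq C\log|x|$, and sparseness $|P\cap B_{2|x|}|\leq C|x|^{3/2}$ bounds the corresponding integrals by $C|x|^{3/2}\log|x|$. For the final summand $\frac{x\cdot y}{|y|^2}$, a direct computation in slab coordinates shows $\int_{P\cap B_R}|y|^{-1}\,dy\leq C(1+R^{1/2})$, so that this term contributes $|x|\cdot O(|x|^{1/2})=O(|x|^{3/2})$. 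Combining, $|V_P(x)|\leq C(1+|x|^{3/2}\log|x|)\leq C(1+|x|^{7/4})$, and rescaling concludes the argument.

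The main obstacle I expect is the careful treatment of $I_{\mathrm{near}}$: both singularities of $G$ lie inside $P\cap B_{2|x|}$ and have to be isolated on small neighborhoods, and one must crucially exploit the slower-than-$R^2$ area growth of the paraboloid — it is precisely this sparseness that saves a factor of $|x|^{1/2}$ and turns what would otherwise be a quadratic-with-log bound on the logarithmic contributions into the desired subquadratic estimate.
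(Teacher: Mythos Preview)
Your proposal is correct and follows essentially the same strategy as the paper: a Taylor bound $|G(x,y)|\le C|x|^2/|y|^2$ handles the far region, while on the near region the three summands of $G$ are estimated separately against the sparseness $|P\cap B_R|\lesssim R^{3/2}$ to obtain $|x|^{3/2}\log|x|\le |x|^{7/4}$. The only cosmetic differences are that the paper works directly with $\gamma P$ rather than scaling to $\gamma=1$, and uses a three-way split $B_{2\hat r}\cup(B_{2|x|}\setminus B_{2\hat r})\cup(B_{2|x|})^c$ with a fixed inner radius $\hat r$ (so that small $|x|$ is absorbed into the first piece) in place of your two-way split with a separate argument for small $|x|$.
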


\begin{proof}
	Throughout this proof we will use $C$ to denote a constant that depends on $\delta$ and which may change from line to line. The first observation to make is that, by a Taylor expansion with remainder, we have
	\begin{equation}\label{taylorbound}
		\abs{G(x,y)}\leq C\abs{x}^2\int_{0}^1 \frac{1}{\abs{y - \tau x}^2} d\tau.
	\end{equation}
	Indeed, setting $f(\tau) = \ln(\abs{\tau x -y})$ we have
	\begin{align*}
		f'(\tau) &= \frac{1}{\abs{\tau x- y}^2} (\tau x -y)\cdot x \\
		f''(\tau) &= \frac{\abs{x}^2}{\abs{\tau x- y}^2} - 2 \frac{1}{\abs{\tau x- y}^4} \left[(\tau x -y)\cdot x\right]^2,
	\end{align*}
	from which \eqref{taylorbound} easily follows. In particular if $\abs{y}\geq 2\abs{x}$ we have that $\abs{y - \tau x} \geq \frac12\abs{y}$ and we have 
	\begin{equation} \label{taylorbound2}
		\abs{G(x,y)} \leq C\frac{\abs{x}^2}{\abs{y}^2}.
	\end{equation}
	Fixing now $\hat{r} \geq 1$ such that $\log(r)\leq r^{\frac14}$ for $r\geq \hat{r}$, we observe that for each $x \in \R^2$ we can split $V_{\gamma P}(x)$ up as
	\begin{align*}
		V_{\gamma P}(x) =  \int_{\gamma P \cap B_{2\hat{r}}} G(x,y) dy + \int_{\gamma P\cap (B_{2\abs{x}}\backslash B_{2\hat{r}})} G(x,y) dy+  \int_{\gamma P\backslash B_{2\abs{x}}} G(x,y) dy.
	\end{align*}
	In the third integral we have $\abs{y} \geq 2\abs{x}$ and so we can use \eqref{taylorbound2} to obtain
	\begin{equation}\label{thirdintegral}
		\abs{\int_{\gamma P\backslash B_{2\abs{x}}} G(x,y) dy} \leq C\abs{x}^2 \int_{2\abs{x}}^{\infty} \int_{-(\gamma y_2)^{\frac{1}{2}}}^{(\gamma y_2)^{\frac{1}{2}}} \frac{1}{y_2^2} dy_1 dy_2 \leq C\gamma^{\frac12}\abs{x}^{\frac{3}{2}},
	\end{equation}
	For the first integral we first estimate from above,
	\begin{align*}
		\int_{\gamma P\cap B_{2\hat{r}}} G(x,y)
		&=\int_{\gamma P\cap B_{2\hat{r}}} \log(\abs{x-y}) - \int_{\gamma P\cap B_{2\hat{r}}} \log(\abs{y}) + \int_{\gamma P\cap B_{2\hat{r}}} \frac{x\cdot y}{\abs{y}^2} \\
		&\leq \int_{(\gamma P\cap B_{2\hat{r}})\backslash B_1(x)} \log(\abs{x-y}) - \int_{(\gamma P\cap B_{2\hat{r}})\cap B_1} \log(\abs{y}) + \int_{B_{2\hat{r}}} \frac{\abs{x}}{\abs{y}} \\
		&\leq C(1+\abs{x}),
	\end{align*}
	where we used that in $(\gamma P\cap B_{2\hat{r}})\backslash B_1(x)$ we have that $1 \leq \abs{x-y}\leq C\hat{r} + \abs{x}$. Similarly we obtain the lower bound 
	\begin{align*}
		\int_{\gamma P\cap B_{2\hat{r}}} G(x,y)
		&\geq - \int_{B_{2\hat{r}}} \frac{\abs{x}}{\abs{y}} + \int_{B_1(x)}\log(\abs{x-y}) - \int_{B_{2\hat{r}}\backslash B_1}  \log(\abs{y}) \\
		&\geq -C(1+\abs{x}),
	\end{align*}
	and we conclude that 
	\begin{equation}\label{firstintegral}
		\abs{\int_{C \cap B_{2\hat{r}}} G(x,y) dy } \leq C(1+\abs{x}).
	\end{equation}
	Finally the middle integral is non-zero only if $\abs{x} \geq \hat{r} > 0$ and so we can write $x = \abs{x} z$ for some $z \in \p B_1$ and we find using the scaling properties of $G(x,y)$ that
	\begin{align*}
		\abs{ \int_{\gamma P\cap (B_{2\abs{x}}\backslash B_{2\hat{r}})} G(x,y) dy}
		&\leq \int_{\{\abs{y_1} \leq (\gamma y_2)^{\frac{1}{2}}\} \cap \{\hat{r} \leq y_2 \leq 2\abs{x}\}} \abs{G(\abs{x}z,y)} dy\\
		&= \int_{\{\abs{y_1} \leq (\gamma y_2)^{\frac{1}{2}}\} \cap \{\hat{r} \leq y_2 \leq 2\abs{x}\}} \abs{G(z,\frac{y}{\abs{x}})} dy\\
		&\leq \abs{x}^2 \int_{\{ \abs{y_1} \leq \abs{x}^{-\frac{1}{2}} (\gamma y_2)^{\frac{1}{2}} \} \cap \{\frac{\hat{r}}{\abs{x}} \leq y_2 \leq 2\}} \abs{\log(\abs{z-y})} + \abs{\log(\abs{y})} + \frac{\abs{z}}{\abs{y}} dy.
	\end{align*}
	Since we are integrating now over a bounded domain (and $\frac{1}{\abs{y}}$ and $\ln(\abs{y})$ are integrable around zero) we can separate this integral. Dealing with the second and third terms first, we find
	\begin{align*}
		\abs{x}^2 \int_{\{ \abs{y_1} \leq \abs{x}^{-\frac{1}{2}} (\gamma y_2)^{\frac{1}{2}} \} \cap \{\frac{\hat{r}}{\abs{x}} \leq y_2 \leq 2\}}  \abs{\log(\abs{y})} + \frac{\abs{z}}{\abs{y}} dy
		& \leq \abs{x}^2 \int_{\frac{\hat{r}}{\abs{x}}}^{2} \int_{-\abs{x}^{-\frac{1}{2}} (\gamma y_2)^{\frac{1}{2}}}^{\abs{x}^{-\frac{1}{2}} (\gamma y_2)^{\frac{1}{2}}} \log(\hat{r}\abs{x}) + \frac{1}{y_2} dy \\
		& \leq C \abs{x}^{\frac{3}{2}}\log(\abs{x}).
	\end{align*}
	Now for the first component of the integral we observe
	\begin{align*}
		\abs{x}^2 \int_{\{ \abs{y_1} \leq \abs{x}^{-\frac{1}{2}} (\gamma y_2)^{\frac{1}{2}} \} \cap \{\frac{\hat{r}}{\abs{x}} \leq y_2 \leq 2\}} &\abs{\log(\abs{z-y})} dy \\
		& =  \int_{\{ \abs{y_1} \leq (\gamma y_2)^{\frac{1}{2}} \} \cap \{\hat{r} \leq y_2 \leq 2\abs{x}\}} \abs{\log(\frac{1}{\abs{x}}) + \log(\abs{x-y})}dy \\
		&\leq \int_{ \{\abs{y_1} \leq (\gamma y_2)^{\frac{1}{2}} \} \cap \{\hat{r} \leq y_2 \leq 2\abs{x}\}} \log(\abs{x}) -\int_{B_1(x)} \log(\abs{x-y})dy \\&+ \int_{( \{\abs{y_1} \leq (\gamma y_2)^{\frac{1}{2}} \} \cap \{\hat{r} \leq y_2 \leq 2\abs{x}\}) \backslash B_1(x)} \log(\abs{x-y})dy  \\
		& \leq C\abs{x}^{\frac{3}{2}}\log(\abs{x}),
	\end{align*}
	where in the last step we used that $1 \leq \abs{x -y} \leq C\abs{x}$ in the region $(\{  \abs{y_1} \leq (\gamma y_2)^{\frac{1}{2}} \}\cap \{\hat{r} \leq y_2 \leq 2\abs{x}\}) \backslash B_1(x)$. Since $\log(\abs{x}) \leq \abs{x}^{\frac14}$ for $\abs{x} \geq \hat{r}$, we obtain the estimate
	\begin{equation}
		\label{middleintegral}
		\abs{ \int_{C\cap (B_{2\abs{x}}\backslash B_{2\hat{r}})} G(x,y) dy} \leq  C\abs{x}^{\frac{7}{4}}.
	\end{equation}
	Putting \eqref{thirdintegral}, \eqref{firstintegral} and \eqref{middleintegral} together we obtain the result.
\end{proof}

\begin{proof}[Proof of Proposition \ref{expansionthm}]
	For each $\rho > 0$ we have that $\Delta V_{\gamma P \cap B_{\rho}}(x) = -\chi_{\gamma P \cap B_{\rho}} \in L^{\infty}(\R^n)$ and so by \cite[Proposition 2.18]{fernandez2020regularity} applied in each ball $B_R$, we have that $V_{\gamma P \cap B_{\rho}}\to V_{\gamma P}$ and $\nabla V_{\gamma P \cap B_{\rho}} \to \nabla V_{\gamma P}$ locally uniformly in $\R^n$ as $\rho \to \infty$. By \eqref{potential:zero} we therefore have that 
	\begin{equation}\label{eqn:potential:zeros}
		V_{\gamma P}(0)  = 0 \hspace{2mm}\text{and}\hspace{2mm} \abs{\nabla V_{\gamma P} (0) } =0.
	\end{equation}
	Now, since $\Delta (u_{\gamma} - p) = - \chi_{\gamma P}$ we have that $\Delta (u-p - V_{\gamma P}) =0$ and since both $u-p$ and $V_{\gamma P}$ grow subquadratically as $\abs{x} \to \infty$, we have by Liouville's theorem that
	\begin{equation*}
		u = p + V_{\gamma P}(x) + \ell(x) +c
	\end{equation*}
	where $\ell: \R^n \to \R$ is a linear function and $c \in \R$. By \eqref{eqn:potential:zeros}, $\ell = c = 0$ and this concludes the proof.
\end{proof}
\end{appendices}

\bibliographystyle{alpha}
\bibliography{References.bib}
\end{document}